\theoremstyle{plain}
\newtheorem{thm}{Theorem}[section]
\newtheorem{cor}{Corollary}[thm]
\newtheorem{lem}[thm]{Lemma}
\newtheorem{prop}[thm]{Proposition}
\theoremstyle{definition}
\newtheorem{exmp}{Example}[section]
\theoremstyle{remark}
\newtheorem{rem}{Remark}
\newcommand\inner[2]{\left\langle #1, #2 \right\rangle}
\newcommand{\tco}{\mathcal{S}}
\newcommand{\nuc}{\mathcal{N}(L^2,M^1_v)}
\newcommand{\bo}{\mathcal{L}(L^2)}
\newcommand{\beauty}{\mathcal{B}}
\newcommand{\R}{\mathbb{R}}
\newcommand{\Rd}{\mathbb{R}^d}
\newcommand{\Rdd}{\mathbb{R}^{2d}}
\newcommand{\Z}{\mathbb{Z}}
\newcommand{\N}{\mathbb{N}}
\newcommand{\HS}{\mathcal{HS}}
\newcommand{\F}{\mathcal{F}}
\newcommand{\tr}{\mathrm{tr}}
\newcommand{\kernel}{k}
\newcommand{\weyl}{a}
\newcommand{\opstft}{\mathfrak{V}}
\title{Equivalent norms for modulation spaces from positive Cohen's class distributions}
\author{Eirik Skrettingland}
\address{Department of Mathematics\\ NTNU Norwegian University of Science and
Technology\\ NO–7491 Trondheim\\Norway}
\email{eirik.skrettingland@ntnu.no}
\keywords{Modulation spaces, Cohen's class, Weyl transform, localization operators, nuclear operators}
\subjclass[2020]{47B10,47B34, 42B35}
\begin{document}

\begin{abstract}
	We give a new class of equivalent norms for modulation spaces by replacing the window of the short-time Fourier transform by a Hilbert-Schmidt operator. The main result is applied to Cohen's class of time-frequency distributions, Weyl operators and localization operators. In particular, any positive Cohen's class distribution with Schwartz kernel can be used to give an equivalent norm for modulation spaces.   We also obtain a description of modulation spaces as time-frequency Wiener amalgam spaces. The Hilbert-Schmidt operator must satisfy a nuclearity condition for these results to hold, and we investigate this condition in detail. 
	\end{abstract}
\maketitle \pagestyle{myheadings} \markboth{Eirik Skrettingland}{Equivalent norms for modulation spaces from Cohen's class}
\thispagestyle{empty}
\section{Introduction}

The modulation spaces introduced by Hans Feichtinger \cite{Feichtinger:1983} have long been recognized as suitable function spaces for various problems in time-frequency analysis\cite{Grochenig:2001,Feichtinger:1997}, PDEs\cite{Benyi:2007,Wang:2007}, pseudodifferential operators \cite{Benyi:2005,Cordero:2003,Grochenig:2006pd,Toft:2004} and others areas -- comprehensive lists of references can be found in \cite{Feichtinger:2006lba} and the recent monograph \cite{Benyi:2020}. Perhaps the most common definition of the modulation spaces nowadays uses the language of \textit{time-frequency analysis}. To motivate the definition, we consider a function $\psi$ on $\Rd$ and its Fourier transform $$\hat{\psi}(\omega)=\int_{\Rd} \psi(t) e^{-2\pi i \omega \cdot t} \ dt \quad \text{ for } \omega \in \Rd.$$ Together, $\psi$ and $\hat{\psi}$ describe the behaviour of $\psi$ as a function of time and frequency, respectively, and give us different approaches to study properties of $\psi$. For instance, smoothness of $\psi$ is related to decay of $\hat{\psi}$.  But although $\hat{\psi}$ shows which frequencies $\omega$ contribute to $\psi$ -- those such that $|\hat{\psi}(\omega)|$ is large -- it does not indicate \textit{when}, i.e. for which $t\in \Rd$, the frequency contributes to $\psi$. In time-frequency analysis one therefore looks for \textit{time-frequency distributions} $Q(\psi)$, which should be a function on $\Rdd$ such that the size of $Q(\psi)(x,\omega)$ describes the contribution of frequency $\omega$ at time $x$ in $\psi$.

The existence of an ideal time-frequency distribution $Q$ is prohibited by various uncertainty principles, but a common choice in time-frequency analysis is the \textit{short-time Fourier transform} (STFT)
$$V_\varphi \psi(z)=\inner{\psi}{\pi(z)\varphi}_{L^2} \text{ for } z\in \Rdd,$$ where the \textit{window} $\varphi$ is a function on $\Rd$ well-localized in time and frequency, and $\pi(z)$ denotes the \textit{time-frequency shift} $$\pi(z)\varphi(t)=e^{2\pi i \omega \cdot t}\varphi(t-x).$$

The modulation spaces $M^{p,q}_m(\Rd)$ are then defined, for $1\leq p,q\leq \infty$ and a weight function $m$ on $\Rdd$, by the norm
\begin{equation} \label{eq:intromod}
  \|\psi\|_{M^{p,q}_m}=\left(\int_{\Rd} \left(\int_{\Rd} |V_{\varphi_0}\psi(x,\omega)|^p m(x,\omega)^p \ dx \right)^{q/p} \ d\omega \right)^{\frac{1}{q}},
\end{equation}
where $\varphi_0(t)=2^{d/4}e^{-\pi |t|^2}$ and the integrals are replaced by supremums for $p,q=\infty$. By our interpretation of $V_{\varphi_0} \psi(x,\omega)$ as a time-frequency distribution, we see that $\|\psi\|_{M^{p,q}_m}$ measures how localized $\psi$ is in the time-frequency plane. More precisely, $L^p$ measures the decay of $\psi$ in time, and $L^q$ the decay of $\psi$ in frequency -- i.e. the decay of $\hat{\psi}$, or the smoothness of $\psi$. The fact that $\|\psi\|_{M^{p,q}_m}$ is finite is therefore a statement on the decay and smoothness of $\psi$.

A useful result on modulation spaces from \cite{Feichtinger:1983} is that replacing the window $\varphi_0$ in \eqref{eq:intromod} by another window $\varphi$ with good time-frequency localization, we obtain an equivalent norm on $M^{p,q}_m(\Rd)$:
\begin{equation}  \label{eq:intronewwindow}
  \|\psi\|_{M^{p,q}_m}\asymp \left(\int_{\Rd} \left(\int_{\Rd} |V_{\varphi}\psi(x,\omega)|^p m(x,\omega)^p \ dx \right)^{q/p} \ d\omega \right)^{\frac{1}{q}}.
\end{equation} The main result of this contribution is an extension of this fact: we show that the window can even be replaced by a Hilbert-Schmidt operator $S$ on $L^2(\Rd)$. 
To explain this transition from function-windows to operator-windows, we fix an arbitrary $\xi \in L^2(\Rd)$ with $\|\xi\|_{L^2}=1$ and  consider the rank-one operator $S=\xi \otimes \varphi$ defined by
\begin{equation}\label{eq:introrankone}
S(\psi)=\xi \otimes \varphi(\psi)=\inner{\psi}{\varphi}_{L^2} \xi.
\end{equation}
It is easy to see that $\|S\pi(z)^*\psi\|_{L^2}=|V_\varphi \psi(z)|,$ hence we may reformulate \eqref{eq:intronewwindow} as 
\begin{equation} \label{eq:intromain}
    \|\psi\|_{M^{p,q}_m}\asymp  \left(\int_{\Rd} \left(\int_{\Rd} \|S\pi(z)^*\psi\|_{L^2}^p m(x,\omega)^p \ dx \right)^{q/p} \ d\omega \right)^{\frac{1}{q}}. 
\end{equation}
Our main result in Theorem \ref{thm:continuous} states that this holds not only for rank-one $S$ as in \eqref{eq:introrankone}, but for all Hilbert-Schmidt operators $S$ having good time-frequency localization -- a statement that itself will need elaboration. By choosing different $S$ we will see that we obtain equivalent norms for the modulation spaces that express quite different properties from these expressed in \eqref{eq:intromod}, hence giving new insights into the structure of modulation spaces.

Comparing \eqref{eq:intromod} and \eqref{eq:intromain}, we see that $|V_{\varphi}\psi(z)|$ is replaced by $\|S\pi(z)^*\psi\|_{L^2}$. This suggests that we replace the STFT by the function $\opstft_S:\Rdd\to L^2(\Rd)$ given by $$\opstft_S(\psi)(z)=S\pi(z)^*\psi.$$
In Section \ref{sec:tfa} we show that $\opstft_S$ actually behaves like the usual STFT $V_\varphi$, by showing that it satisfies an isometry property and an inversion formula. This insight allows us to prove \eqref{eq:intromain} in Section \ref{sec:mainresult} using methods similar to those used to prove that the modulation spaces are independent of the window function in \cite{Grochenig:2001}. 

 Sections \ref{sec:bonychemin}, \ref{sec:cohensclass} and \ref{sec:locops} are then devoted to examples and reinterpretations of the main result. First we consider Weyl operators in Section \ref{sec:bonychemin}. The reformulation of \eqref{eq:intromain} in Theorem \ref{thm:bonychemin} generalizes a result by Gr\"ochenig and Toft \cite{Grochenig:2011toft} that identifies certain modulation spaces with function spaces introduced by Bony and Chemin\cite{Bony:1994}.
 
 In Section 7 we turn our attention to Cohen's class of time-frequency distributions. As there is no ideal time-frequency disribution, Cohen's class was introduced by Cohen in \cite{Cohen:1966} as the time-frequency distributions $Q_a$ given by 
 $$Q_a(\psi)(z)=a\ast W(\psi) \quad \text{ for } z\in \Rdd,$$
 where $a$ is some function (or distribution) on $\Rdd$ and $W(\psi)$ is the Wigner-distribution, see \eqref{eq:wigner} for its definition. By varying $a$ one obtains time-frequency distributions with different properties. An important example of a Cohen's class distribution is the \textit{spectrogram} $Q(\psi)(z)=|V_{\varphi_0}\psi(z)|^2$. Then \eqref{eq:intromod} shows that the modulation space norm of $\psi$ is given by the $L^{p,q}_m$-norm of (the square root of) $Q(\psi)$. We might therefore ask whether this is true if we replace the spectrogram by another Cohen's class distributions $Q_a$. Using a description of Cohen's class in terms of bounded operators given in \cite{Luef:2018b} together with \eqref{eq:intromain}, we are able to give in Theorem \ref{thm:cohenclass} a set of Cohen's class distributions whose $L^{p,q}_m$ norms define the modulation space norms. The question of characterizing these Cohen class distributions $Q_a$ in terms of $a$ seems to be a difficult problem in general. However, using a result from \cite{Keyl:2015} we are able to prove the following in Theorem \ref{thm:squareroot}:
 \begin{quote}
 	Let $1\leq p,q \leq \infty$ and assume that the weight $m$ grows at most polynomially. If $a$ is a Schwartz function on $\Rdd$ and $Q_a(\psi)$ is a positive function for each $\psi \in L^2(\Rd)$, then
 	\begin{equation*}
  \|\psi\|_{M^{p,q}_m} \asymp \left\|\sqrt{Q_{a}(\psi)}\right\|_{L^{p,q}_{m}(\Rdd)} .
\end{equation*}
 \end{quote}
 Finally, we let $S$ in \eqref{eq:intromain} be a \textit{localization operator} in Section 8. This leads to a characterization of modulation spaces as time-frequency Wiener amalgam spaces in Theorem \ref{thm:locop}, which is a continuous version of results by D\"orfler, Feichtinger and Gr\"ochenig \cite{Dorfler:2011,Dorfler:2006}, see also \cite{Romero:2012,Dorfler:2014}, much like the fact that the standard Wiener amalgam spaces have both a continuous and discrete description. We mention that \cite{Boggiatto:2005,Grochenig:2011toft,Grochenig:2013} also use localization operators to get equivalent norms for modulation spaces, but their approach and results are different from those we consider. 

Before ending this introduction, we wish to point out that sufficient conditions on $S$ for \eqref{eq:intromain} to hold will be a recurring theme throughout the paper. The most general sufficient condition on $S$ is that its Hilbert space adjoint must be a nuclear operator from $L^2(\Rd)$ to $M^1_v(\Rd)$. In some ways this a very natural condition: if applied to the rank-one operator in \eqref{eq:introrankone} it means that $\varphi \in M^1_v(\Rd)$, which is the standard condition for windows for modulation spaces. As we see in Section \ref{sec:operators}, this nuclearity condition is also easy to handle when working with localization operators. From other perspectives, such as the Weyl calculus, the condition is more mysterious, and we will therefore also study stronger sufficient conditions on $S$ for \eqref{eq:intromain} to hold.

\subsection*{Notation and conventions}

If $X$ is a Banach space and $X'$ its dual space, the action of $y\in X'$ on $x\in X$ is denoted by the bracket $\inner{y}{x}_{X',X}$, where the bracket is antilinear in the second coordinate to be compatible with the notation for inner products in Hilbert spaces. This means that we are identifying the dual space $X'$ with \textit{anti}linear functionals on $X$. For two Banach spaces $X,Y$ we denote by $\mathcal{L}(X,Y)$ the Banach space of bounded linear operators $S:X\to Y$, and if $X=Y$ we simply write $\mathcal{L}(X)$. For brevity we often write $\bo$ for $\mathcal{L}(L^2(\Rd)$. For topological spaces $X,Y$ we write $X\hookrightarrow Y$ to denote that there is a continuous inclusion of $X$ into $Y.$

For $p\in [1,\infty]$, $p'$ denotes the conjugate exponent, i.e. $\frac{1}{p}+\frac{1}{p'}=1$. The notation $P \lesssim Q$ means that there is some $C>0$ such that $P\leq C\cdot Q$, and $P\asymp Q$ means that $Q\lesssim P$ and $P\lesssim Q$. For $\Omega \subset \Rdd$, $\chi_\Omega$ is the characteristic function of $\Omega.$ $\mathscr{S}(\Rd)$ denotes the Schwartz space, and $\mathscr{S}'(\Rd)$ its dual space  of tempered distributions.
\section{Time-frequency analysis}

As we have seen in the introduction, our main results are phrased in terms of the \textit{time-frequency shifts} $\pi(z)\in \bo$ for $z=(x,\omega)\in \Rdd$, defined by $$\pi(z)\psi(t)=e^{2\pi i \omega \cdot t} \psi(t-x) \quad \text{ for } \psi \in L^2(\Rd).$$ The time-frequency shifts are unitary on $L^2(\Rd)$, and for they satisfy 
\begin{align}
\pi(x,\omega)\pi(x',\omega')&=e^{-2\pi i \omega'\cdot x}\pi(x+x',\omega+\omega') \label{eq:tfcomposition}\\
\pi(x,\omega)^* &= e^{-2 \pi i x \cdot \omega} \pi(-x,-\omega) \label{eq:tfadjoint}
\end{align}
for $x,x',\omega, \omega' \in \Rd$. Closely related to the time-frequency shifts is the short-time Fourier transform (STFT) $V_\varphi \psi\in L^2(\Rdd)$, given by 
\begin{equation} \label{eq:STFT}
  V_\varphi \psi(z) =\inner{\psi}{\pi(z) \varphi}_{L^2} \quad \text{ for } \psi,\phi\in L^2(\Rd), z\in \Rdd. 
\end{equation}
The function $\varphi$ is often referred to as the \textit{window} of the STFT $V_\varphi \psi$. An important property of the STFT is \textit{Moyal's identity}\cite[Thm. 3.2.1]{Grochenig:2001}.
\begin{lem}[Moyal's identity]
If $\psi_1, \psi_2, \phi_1, \phi_2 \in L^2(\R^d)$, then $V_{\phi_i}\psi_j \in L^2(\R^{2d})$ for $i,j\in \{1,2\}$ and 
\begin{equation*}
	\int_{\Rdd} V_{\phi_1} \psi(z) \overline{V_{\phi_2}\psi(z)} \ dz=\inner{\psi_1}{\psi_2}_{L^2}\overline{\inner{\phi_1}{\phi_2}}_{L^2}.
\end{equation*}
\end{lem}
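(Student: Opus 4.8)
The plan is to reduce everything to the classical Plancherel theorem by recognizing the STFT as a Fourier transform in the frequency variable. Writing out the definition, for $z=(x,\omega)$ we have
\begin{equation*}
V_\phi\psi(x,\omega)=\int_{\Rd}\psi(t)\overline{\phi(t-x)}\,e^{-2\pi i\omega\cdot t}\,dt=\F\bigl[\psi\cdot\overline{\phi(\cdot-x)}\bigr](\omega),
\end{equation*}
so for each fixed $x$ the function $\omega\mapsto V_\phi\psi(x,\omega)$ is the Fourier transform of $g^x_{\psi,\phi}:=\psi\cdot\overline{\phi(\cdot-x)}$. Since $\psi,\phi\in\Ltwo$, the Cauchy--Schwarz inequality shows $g^x_{\psi,\phi}\in\Ld$ for every $x$, so this identification is valid pointwise in $\omega$.

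First I would establish the special (``diagonal'') case $\psi_1=\psi_2=\psi$, $\phi_1=\phi_2=\phi$, namely that $V_\phi\psi\in L^2(\Rdd)$ with $\|V_\phi\psi\|_{L^2(\Rdd)}=\|\psi\|_{L^2}\|\phi\|_{L^2}$. Applying Plancherel in the $\omega$-variable gives, for almost every $x$,
\begin{equation*}
\int_{\Rd}|V_\phi\psi(x,\omega)|^2\,d\omega=\int_{\Rd}|\psi(t)|^2\,|\phi(t-x)|^2\,dt,
\end{equation*}
and integrating in $x$ and invoking Tonelli (the integrand is nonnegative, so no interchange issue arises) yields $\|\psi\|_{L^2}^2\|\phi\|_{L^2}^2$ after the substitution $s=t-x$. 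This simultaneously proves the $L^2$-membership claimed in the lemma for all four combinations $V_{\phi_i}\psi_j$.

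With $L^2$-membership in hand I would obtain the general identity. By Cauchy--Schwarz the integrand $V_{\phi_1}\psi_1\,\overline{V_{\phi_2}\psi_2}$ lies in $\Ldd$, so the left-hand side is well defined. I would then repeat the computation using the polarized Plancherel (Parseval) identity in $\omega$ -- legitimate because $g^x_{\psi_1,\phi_1}$ and $g^x_{\psi_2,\phi_2}$ both lie in $L^1\cap L^2$ for almost every $x$ -- to get
\begin{equation*}
\int_{\Rd}V_{\phi_1}\psi_1(x,\omega)\overline{V_{\phi_2}\psi_2(x,\omega)}\,d\omega=\int_{\Rd}\psi_1(t)\overline{\psi_2(t)}\,\overline{\phi_1(t-x)}\phi_2(t-x)\,dt.
\end{equation*}
Integrating in $x$ and interchanging the order of integration -- justified because the absolute value of the integrand is integrable on $\Rd\times\Rd$, again by Tonelli and Cauchy--Schwarz -- the inner $x$-integral of $\overline{\phi_1(t-x)}\phi_2(t-x)$ equals $\overline{\inner{\phi_1}{\phi_2}_{L^2}}$ independently of $t$ after the substitution $s=t-x$, leaving $\overline{\inner{\phi_1}{\phi_2}_{L^2}}\,\inner{\psi_1}{\psi_2}_{L^2}$, as desired. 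Alternatively, once the diagonal case is known one may appeal to polarization, since both sides are the same sesquilinear expression in the pairs $(\psi_1,\psi_2)$ and $(\phi_1,\phi_2)$.

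The main obstacle is the bookkeeping around the interchanges of integration together with the use of Plancherel for a product of two $L^2$ functions, which need not itself be square-integrable. The remedy is to observe that $g^x_{\psi,\phi}\in L^1$ holds for \emph{every} $x$, while $g^x_{\psi,\phi}\in L^2$ holds only for \emph{almost every} $x$ (via the Tonelli bound from the diagonal step), and that this almost-everywhere statement suffices since the outer integration is over $x$. Establishing the diagonal case first is precisely what renders all the integrability and Fubini conditions automatic in the general case.
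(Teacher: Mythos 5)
Your proof is correct, and the paper in fact gives no proof of this lemma at all, citing \cite[Thm.~3.2.1]{Grochenig:2001}, where the argument is essentially the one you give: identify $V_\phi\psi(x,\cdot)$ as the Fourier transform of $\psi\cdot\overline{\phi(\cdot-x)}$, apply (polarized) Plancherel in $\omega$ for almost every $x$, and integrate in $x$ via Tonelli--Fubini. Your extra care about $g^x_{\psi,\phi}$ lying in $L^1$ for every $x$ but in $L^2$ only for almost every $x$ correctly handles the one delicate point of this standard argument.
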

In particular, we see that for fixed window $\varphi$ with $\|\varphi\|_2=1$ the map $\psi \mapsto V_\varphi \psi$ is an isometry from $L^2(\Rd)$ to $L^2(\Rdd)$.

\subsection{Admissable weight functions and weighted, mixed $L^{p}$ spaces}
A \textit{submultiplicative} weight function $v$ on $\Rdd$ is a non-negative function $v:\Rdd \to \R$ such that $v(z_1+z_2)\leq v(z_1)v(z_2)$ for $z_1,z_2 \in \Rdd$. Whenever we refer to a submultiplicative weight function $v$ we will assume that $v$ is continuous and satisfies $v(x,\omega)=v(-x,\omega)=v(x,-\omega)=v(-x,-\omega)$; these assumptions do not lead to a loss of generality as any submultiplicative weight function is equivalent in a natural sense to a weight satisfying these assumptions, see \cite{Grochenig:2001,Grochenig:2007}. Furthermore, these assumptions imply that if  $v$ is not identically $0$, then $v(z)\geq 1$ for all $z\in \Rdd$.
 The assumptions above are satisfied by standard examples such as the polynomial weights $$v_s(z)=(1+|z|^2)^{s/2} \quad s\geq 0,$$ but also by the exponential weights $v_a(z)=e^{a|z|}$ for $a\geq 0$. 
 A non-negative weight function $m$ on $\Rdd$ is said to be $v$-\textit{moderate} if $v$ is a submultiplicative weight function and there exists some constant $C_v^m>0$ such that 
\begin{equation*} 
  m(z_1+z_2)\leq C_v^m v(z_1)m(z_2).
\end{equation*}
 We refer the reader to the survey \cite{Grochenig:2007} for more examples and motivation for these assumptions. For any $v$-moderate weight $m$ and $1\leq p,q\leq \infty$ we may define the Banach space $L^{p,q}_m(\Rdd)$ to be the equivalence classes of Lebesgue measurable functions $F:\Rdd\to \mathbb{C}$ such that
\begin{equation*}
  \|F\|_{L^{p,q}_{m}} :=\left(\int_{\Rd} \left(\int_{\Rd} |F(x,\omega)|^p m(x,\omega)^p \ dx \right)^{q/p} \ d\omega \right)^{\frac{1}{q}} < \infty.
\end{equation*}
If $p<\infty$ or $q<\infty$, the corresponding integral is replaced by an essential supremum.

\subsection{Modulation spaces}
 Throughout the rest of the paper, we will let $\varphi_0\in L^2(\Rd)$ denote the normalized Gaussian, i.e. $$\varphi_0(t)=2^{d/4} e^{-\pi t\cdot t}\quad \text{ for }t\in \Rd.$$ For a submultiplicative weight $v$, we define the space $M^1_v(\Rd)$ to be the Banach space of those $\psi\in L^2(\Rd)$ such that 
\begin{equation*}
  \|\psi\|_{M^1_v} :=\int_{\Rdd} |V_{\varphi_0}\psi(z)| v(z) \ dz<\infty.
\end{equation*}
This will serve as our space of test functions. It is always non-empty as it contains $\varphi_0$, and for weights $v$ of polynomial growth it contains the Schwartz functions $\mathscr{S}(\Rd)$\cite[Prop. 11.3.4]{Grochenig:2001}. For more general weights $M^1_v(\Rd)$ will not necessarily contain $\mathscr{S}(\Rd)$ and might be quite small. The time-frequency shifts $\pi(z)$ are bounded on $M^1_v(\Rd)$ \cite[Thm. 11.3.5]{Grochenig:2001} with 
\begin{equation} \label{eq:tfbound}
  \|\pi(z)\psi\|_{M^1_v}\leq v(z) \|\psi\|_{M^1_v},
\end{equation}
and hence the STFT $V_\phi \psi(z)$ for $\phi\in M^1_v(\Rd)$ and $\psi \in (M^1_v(\Rd))'$ can be defined by modifying the inner product in the definition \eqref{eq:STFT} to a duality bracket: $V_\phi\psi(z)=\inner{\psi}{\pi(z)\phi}_{(M^1_v)',M^1_v}$. 

For any $v$-moderate weight $m$ and $1\leq p,q \leq \infty$, we then define the modulation space $M^{p,q}_m(\Rd)$ to consist of those $\psi \in (M^1_v(\Rd))'$ such that 
\begin{equation*}
  \|\psi\|_{M^{p,q}_m}:=\|V_{\varphi_0}\psi\|_{L^{p,q}_m} <\infty.
\end{equation*}
 When $p=q$ we will write $M^p_m(\Rd)$ for $M^{p,p}_m(\Rd)$, and when $m\equiv 1$ we write $M^{p,q}(\Rd)$. Some properties of the modulation spaces are summarized below, proofs may be found in the monograph \cite{Grochenig:2001}.

\begin{prop} \label{prop:modulationspaces}
Let $m$ be a $v$-moderate weight and $1\leq p,q \leq \infty$.
	\begin{enumerate}[(a)]
		\item $M^{p,q}_{m}(\Rd)$ is a Banach space with the norm $\|\cdot \|_{M^{p,q}_m}$.
		\item If $1\leq p_1\leq p_2\leq \infty$, $1\leq q_1\leq q_2\leq \infty$ and $m_2\lesssim m_1$, then $M_{m_1}^{p_1,q_1}(\Rd)\hookrightarrow M_{m_2}^{p_2,q_2}(\Rd)$. 
		\item If $p,q<\infty$, then $M_{1/m}^{p',q'}(\Rd)$ is the dual space of $M^{p,q}_{m}(\Rd)$ with
 \begin{equation*} 
  \inner{\phi}{\psi}_{M^{p',q'}_{1/m},M^{p,q}_m}=\int_{\Rdd} V_{\varphi_0}\psi(z)\overline{V_{\varphi_0}\phi(z)} \ dz.
\end{equation*}
		\item $L^2(\Rd)=M^2(\Rd)$.
	\end{enumerate}
\end{prop}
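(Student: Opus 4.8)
The entire proposition is about transferring properties of the mixed weighted spaces $L^{p,q}_m(\Rdd)$ to $M^{p,q}_m(\Rd)$ along the analysis map $\psi\mapsto V_{\varphi_0}\psi$, so my overarching plan is to first establish that this map realizes $M^{p,q}_m(\Rd)$ as a closed subspace of $L^{p,q}_m(\Rdd)$ and then deduce each assertion from the corresponding fact about $L^{p,q}_m$. I would dispatch (d) immediately: applying Moyal's identity with $\phi_1=\phi_2=\varphi_0$ and $\|\varphi_0\|_{L^2}=1$ shows that $\psi\mapsto V_{\varphi_0}\psi$ is an isometry of $\Ltwo$ into $L^2(\Rdd)$, so $\|\psi\|_{M^2}=\|\psi\|_{L^2}$ and the spaces coincide with equal norms.

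For (a) I would take a Cauchy sequence $(\psi_n)$ in $M^{p,q}_m(\Rd)$; then $(V_{\varphi_0}\psi_n)$ is Cauchy in the Banach space $L^{p,q}_m(\Rdd)$ and converges to some $F$. The key step is the reproducing identity $V_{\varphi_0}\psi = V_{\varphi_0}\psi \ast_\natural V_{\varphi_0}\varphi_0$ (a twisted convolution, up to normalization), which passes to the limit and forces $F$ to lie in the range of $V_{\varphi_0}$. Using the inversion formula I would then produce $\psi\in (M^1_v)'$ with $V_{\varphi_0}\psi=F$, so that $\psi_n\to\psi$ in $M^{p,q}_m$. This shows the range is closed and hence $M^{p,q}_m(\Rd)$ is complete.

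Part (b) is where I expect the genuine difficulty, because on the ambient spaces one does \emph{not} have $L^{p_1,q_1}_{m_1}\hookrightarrow L^{p_2,q_2}_{m_2}$ when $p_1\le p_2$; the inclusion must instead exploit the automatic smoothness and decay of short-time Fourier transforms. Concretely, I would start from the pointwise bound $|V_{\varphi_0}\psi|\le |V_{\varphi_0}\psi|\ast |V_{\varphi_0}\varphi_0|$ (ordinary convolution, since the phase has modulus one) and note that $V_{\varphi_0}\varphi_0$ is a Gaussian on $\Rdd$, so it lies in every weighted $L^s_v$. The crucial point is that a weighted Young inequality then \emph{raises} the exponents: convolving an $L^{p_1,q_1}$ function with an $L^s$ function lands in $L^{p_2,q_2}$ precisely when $p_1\le p_2$ and $q_1\le q_2$, and the rapid decay of the Gaussian together with moderateness of $v$ absorbs the weight shift; combined with $m_2\lesssim m_1$ this yields membership in $L^{p_2,q_2}_{m_2}$. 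Establishing this convolution relation and verifying the Young estimate in the mixed, weighted setting is the heart of the matter.

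Finally, for (c), assuming $p,q<\infty$, I would combine (a) with duality of $L^{p,q}_m$. Hölder's inequality for $L^{p,q}_m$ shows that each $\phi\in M^{p',q'}_{1/m}(\Rd)$ defines a bounded functional on $M^{p,q}_m(\Rd)$ via the stated pairing, giving one inclusion. Conversely, since $V_{\varphi_0}$ embeds $M^{p,q}_m$ as a closed subspace of $L^{p,q}_m$, any bounded functional extends by Hahn--Banach to $L^{p,q}_m$, whose dual is $L^{p',q'}_{1/m}$ (this is where $p,q<\infty$ is essential). Applying the synthesis operator to a representing function, and using that synthesis maps $L^{p',q'}_{1/m}$ boundedly into $M^{p',q'}_{1/m}$, exhibits the functional as pairing with some $\phi\in M^{p',q'}_{1/m}(\Rd)$; identifying the pairing through Moyal's identity then completes the argument.
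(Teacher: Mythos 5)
The paper offers no proof of this proposition at all — it defers to Gr\"ochenig's monograph — and your blind argument is precisely the standard proof found there: Moyal's identity for (d), the reproducing formula $V_{\varphi_0}\psi = V_{\varphi_0}\psi\,\natural\,V_{\varphi_0}\varphi_0$ together with weighted mixed-norm Young estimates (valid since the Gaussian $V_{\varphi_0}\varphi_0$ lies in every $L^{r,s}_v$, as submultiplicative weights grow at most exponentially) for completeness and the embeddings, and Hahn--Banach applied through the closed range of $V_{\varphi_0}$ inside $L^{p,q}_m(\Rdd)$ plus the Benedek--Panzone duality for (c). The only point to tighten is (d): the isometry alone gives $L^2(\Rd)\subseteq M^2(\Rd)$ with equal norms, while the reverse inclusion $M^2(\Rd)\subseteq L^2(\Rd)$ needs the inversion/synthesis step you develop in part (a), applied with $p=q=2$ and $m\equiv 1$ — your machinery supplies it, but your one-line dispatch of (d) elides it.
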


\begin{rem} 
\begin{enumerate}[(a)]
\item As a particular case of part $c)$, we may identify $(M^1_v(\Rd))'$ with $M^\infty_{1/v}(\Rd)$, which we will do for the rest of the paper. The reader should also note that the duality extends the inner product on $L^2(\Rd)$, since if $\psi \in L^2(\Rd)\cap M^\infty_{1/v}(\Rd)$ and $\phi\in M^1_v(\Rd)$, we find by Moyal's identity that 
\begin{equation*}
  \inner{\psi}{\phi}_{M^\infty_{1/v},M^1_v}=\int_{\Rdd} V_{\varphi_0}\phi(z)\overline{V_{\varphi_0}\psi(z)} \ dz=\inner{\psi}{\phi}_{L^2}.
\end{equation*}
    \item A simple calculation using our assumption that $v(-z)=v(z)$ gives that if $m$ is $v$-moderate, then so is $1/m$. 
	\item As mentioned, $\mathscr{S}(\Rd)$ embeds continuously into $M^1_v(\Rd)$ when $v$ grows polynomially, so in this case we may identify $M^\infty_{1/v}(\Rd)$ with a subspace of the tempered distributions. This is not true for more general weights, hence we need to work with the abstract space $M^\infty_{1/v}(\Rd)$ defined as the dual space of our test functions $M^1_v(\Rd)$.
\end{enumerate}
\end{rem}

 The property of modulation spaces that is our main focus is the fact that changing the window for the STFT leads to an equivalent norm.

\begin{thm} \label{thm:windowindependent}
	Let $m$ be a $v$-moderate weight function and let $\phi\in M^1_v(\Rd)$. Then $\|V_\phi\psi\|_{L^{p,q}_m}$ defines an equivalent norm on $M^{p,q}_m(\Rd)$: for $\psi \in M^{p,q}_m(\Rd)$ we have $$\|V_\phi \psi\|_{L^{p,q}_m}\asymp \|\psi\|_{M^{p,q}_m}.$$
\end{thm}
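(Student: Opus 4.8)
The plan is to reduce the equivalence of norms to a single pointwise convolution-domination estimate for the STFT, which is then fed into a weighted Young-type inequality on the mixed spaces $L^{p,q}_m(\Rdd)$. The central lemma I would establish first is that for any $g,\gamma,\phi\in M^1_v(\Rd)$ with $\inner{\gamma}{g}_{L^2}\neq 0$ and any $\psi\in M^\infty_{1/v}(\Rd)$, one has the pointwise bound
\[
|V_\phi\psi(w)|\leq \frac{1}{|\inner{\gamma}{g}_{L^2}|}\,\bigl(|V_g\psi|\ast |V_\phi\gamma|\bigr)(w),\qquad w\in\Rdd.
\]
To derive this I would start from the (weak) inversion formula $\psi=\inner{\gamma}{g}_{L^2}^{-1}\int_{\Rdd} V_g\psi(z)\,\pi(z)\gamma\,dz$, insert it into $V_\phi\psi(w)=\inner{\psi}{\pi(w)\phi}$, and use the composition and adjoint rules \eqref{eq:tfcomposition}--\eqref{eq:tfadjoint} to compute $|\inner{\pi(z)\gamma}{\pi(w)\phi}|=|V_\phi\gamma(w-z)|$; the triangle inequality then yields the convolution on the right.

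Granting this lemma, the two inequalities in $\|V_\phi\psi\|_{L^{p,q}_m}\asymp\|\psi\|_{M^{p,q}_m}=\|V_{\varphi_0}\psi\|_{L^{p,q}_m}$ come from two choices of auxiliary window. For the bound $\|V_\phi\psi\|_{L^{p,q}_m}\lesssim\|V_{\varphi_0}\psi\|_{L^{p,q}_m}$ I would take $g=\gamma=\varphi_0$, giving $|V_\phi\psi|\leq |V_{\varphi_0}\psi|\ast|V_\phi\varphi_0|$. For the reverse bound I would apply the lemma with the roles reversed, namely window $\varphi_0$ in the STFT and $g=\gamma=\phi$ (legitimate since $\phi\neq 0$, so $\inner{\phi}{\phi}_{L^2}=\|\phi\|_{L^2}^2\neq 0$), giving $|V_{\varphi_0}\psi|\leq \|\phi\|_{L^2}^{-2}\,|V_\phi\psi|\ast|V_{\varphi_0}\phi|$. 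In both cases the convolution kernel is the cross-STFT of two windows.

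The second ingredient is the weighted convolution inequality $\|F\ast H\|_{L^{p,q}_m}\lesssim \|F\|_{L^{p,q}_m}\,\|H\|_{L^1_v}$, valid because $m$ is $v$-moderate: writing $m(w)\leq C_v^m\,v(z)\,m(w-z)$ one dominates $m\cdot(F\ast H)$ by $(m|F|)\ast(v|H|)$ and applies the ordinary mixed-norm Young inequality $\|\cdot\ast\cdot\|_{L^{p,q}}\lesssim\|\cdot\|_{L^{p,q}}\|\cdot\|_{L^1}$ (two applications of Minkowski's integral inequality). Combining this with the two pointwise estimates reduces everything to checking that the kernels lie in $L^1_v(\Rdd)$. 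Here I would use the magnitude symmetry $|V_\phi\varphi_0(z)|=|V_{\varphi_0}\phi(-z)|$ together with the assumption $v(-z)=v(z)$ to get $\|V_\phi\varphi_0\|_{L^1_v}=\|V_{\varphi_0}\phi\|_{L^1_v}=\|\phi\|_{M^1_v}<\infty$, and likewise $\|V_{\varphi_0}\phi\|_{L^1_v}=\|\phi\|_{M^1_v}<\infty$; note also that $\varphi_0\in M^1_v(\Rd)$ since any submultiplicative weight grows at most exponentially and is thus beaten by the Gaussian decay of $V_{\varphi_0}\varphi_0$. Assembling the pieces gives constants $\|V_\phi\psi\|_{L^{p,q}_m}\lesssim\|\phi\|_{M^1_v}\,\|\psi\|_{M^{p,q}_m}$ and $\|\psi\|_{M^{p,q}_m}\lesssim \|\phi\|_{L^2}^{-2}\|\phi\|_{M^1_v}\,\|V_\phi\psi\|_{L^{p,q}_m}$, which is the claimed equivalence.

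The step I expect to be the main obstacle is the rigorous justification of the pointwise convolution-domination lemma in the \emph{distributional} range, i.e. for $\psi\in M^\infty_{1/v}(\Rd)=(M^1_v(\Rd))'$ rather than merely for $\psi\in L^2(\Rd)$. One must make sense of the vector-valued integral in the inversion formula and of the interchange of the duality bracket with that integral, which requires either a density argument from $L^2$ together with the a priori boundedness already obtained, or a careful weak-$*$ interpretation using that $\pi(z)\phi$ depends continuously on $z$ in $M^1_v(\Rd)$ by \eqref{eq:tfbound}. The convolution inequality and the $L^1_v$-finiteness of the kernels, by contrast, are routine once the $v$-moderateness and the symmetry of $v$ are exploited as above.
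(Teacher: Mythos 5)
Your proposal is correct, and it coincides with the approach the paper relies on: the paper does not prove Theorem \ref{thm:windowindependent} itself but defers to \cite{Grochenig:2001}, whose canonical argument is exactly your pointwise domination $|V_\phi\psi|\leq |\inner{\gamma}{g}|^{-1}\,|V_g\psi|\ast|V_\phi\gamma|$ combined with the weighted convolution relation $L^{p,q}_m \ast L^1_v\subset L^{p,q}_m$ (the paper's \eqref{eq:mixedyoung}), with the upper bound being precisely Proposition \ref{prop:rankonecase} cited from (11.33) there. The same template (analysis bound via an expansion reducing to Proposition \ref{prop:rankonecase}, synthesis bound via \eqref{eq:mixedyoung}, and a weak-$*$/dominated-convergence justification of the inversion formula on $M^\infty_{1/v}(\Rd)$ -- the very point you flag as delicate) is what the paper then follows in Lemmas \ref{lem:analysisbound} and \ref{lem:synthesisbound} to prove its operator-window generalization, Theorem \ref{thm:continuous}.
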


Our main result is that we also obtain equivalent norms for $M^{p,q}_m(\Rd)$ when $\phi$ is replaced by an operator $S$ satisfying certain conditions, after modifying the definition of the STFT correspondingly. To prove this, we will use the precise statement of the upper bound $\|V_{\phi}\psi\|_{L^{p,q}_m}\lesssim \|\psi\|_{M^{p,q}_m}$; it follows from equation (11.33) in \cite{Grochenig:2001}. Recall that $C_v^m$ is the constant from $m(z_1+z_2)\leq C_v^m v(z_1)m(z_2).$

\begin{prop} \label{prop:rankonecase}
Let $m$ be a $v$-moderate weight function and let $\phi\in M^1_v(\Rd)$. The map $\psi \mapsto V_\phi \psi$ is bounded from $M^{p,q}_m(\Rd)$ to $L^{p,q}_m(\Rdd)$ with $ \|V_\phi \psi\|_{L^{p,q}_m}\leq C_v^m \|\phi\|_{M^1_v} \|\psi\|_{M^{p,q}_m}$.
\end{prop}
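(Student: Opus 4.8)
The plan is to reduce the bound to a weighted convolution inequality via the change-of-window identity for the STFT. The first step is to establish the pointwise estimate
\[
|V_\phi \psi(z)| \leq \bigl(|V_{\varphi_0}\psi| \ast |V_{\varphi_0}\phi|^\vee\bigr)(z), \qquad z\in \Rdd,
\]
where $F^\vee(z):=F(-z)$. To derive this rigorously for a general $\psi\in M^{p,q}_m(\Rd)$, which is only a functional in $(M^1_v(\Rd))'$, I would expand the window side rather than $\psi$: for $g\in M^1_v(\Rd)$ the inversion formula $g=\int_{\Rdd} V_{\varphi_0}g(w)\,\pi(w)\varphi_0\,dw$ converges in the norm of $M^1_v(\Rd)$ (using $\|\varphi_0\|_{L^2}=1$), so applying the continuous functional $\inner{\psi}{\cdot}$ and interchanging it with the integral is legitimate. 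Taking $g=\pi(z)\phi$ and using the covariance relation $V_{\varphi_0}(\pi(z)\phi)(w)=c\,V_{\varphi_0}\phi(w-z)$ for a unimodular factor $c$ (which comes from the composition and adjoint rules \eqref{eq:tfcomposition}--\eqref{eq:tfadjoint}) gives $V_\phi\psi(z)=\inner{\psi}{\pi(z)\phi}=\int_{\Rdd}\overline{c\,V_{\varphi_0}\phi(w-z)}\,V_{\varphi_0}\psi(w)\,dw$; passing absolute values inside the integral kills the phase and produces the convolution above.

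Next I would record that, by the symmetry $v(-z)=v(z)$, the $L^1_v$-mass of the convolving factor is exactly the test-function norm:
\[
\bigl\| |V_{\varphi_0}\phi|^\vee \bigr\|_{L^1_v}=\int_{\Rdd}|V_{\varphi_0}\phi(-z)|\,v(z)\,dz=\int_{\Rdd}|V_{\varphi_0}\phi(z)|\,v(z)\,dz=\|\phi\|_{M^1_v}.
\]
The core estimate is then the weighted Young inequality $\|F \ast H\|_{L^{p,q}_m} \leq C_v^m \|F\|_{L^{p,q}_m}\|H\|_{L^1_v}$, valid for $F\in L^{p,q}_m(\Rdd)$ and $H\in L^1_v(\Rdd)$. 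I would obtain it from the $v$-moderateness of $m$: writing $m(z)=m((z-w)+w)\leq C_v^m\,v(z-w)\,m(w)$ inside the convolution integral gives the pointwise bound $m(z)\,|(F\ast H)(z)|\leq C_v^m\,\bigl((m|F|)\ast(v|H|)\bigr)(z)$, and then Minkowski's integral inequality applied in the $L^{p,q}$ norm, together with translation invariance of that norm, bounds $\|(m|F|)\ast(v|H|)\|_{L^{p,q}}$ by $\|m|F|\|_{L^{p,q}}\,\|v|H|\|_{L^1}=\|F\|_{L^{p,q}_m}\|H\|_{L^1_v}$.

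Finally I would chain these together. Applying the Young inequality with $F=V_{\varphi_0}\psi$ and $H=|V_{\varphi_0}\phi|^\vee$, and using the monotonicity of the $L^{p,q}_m$-norm on nonnegative functions together with the pointwise bound from the first step, yields
\[
\|V_\phi\psi\|_{L^{p,q}_m}\leq \bigl\||V_{\varphi_0}\psi|\ast|V_{\varphi_0}\phi|^\vee\bigr\|_{L^{p,q}_m}\leq C_v^m\,\|V_{\varphi_0}\psi\|_{L^{p,q}_m}\,\|\phi\|_{M^1_v}=C_v^m\,\|\phi\|_{M^1_v}\,\|\psi\|_{M^{p,q}_m},
\]
which is the asserted bound. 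The step requiring the most care is the pointwise identity: since $\psi$ is in general only an element of $(M^1_v(\Rd))'$, the manipulation must be phrased through the norm-convergent expansion of $\pi(z)\phi\in M^1_v(\Rd)$ (or, equivalently, proved first for $\psi\in M^1_v(\Rd)$ and extended by the duality structure of Proposition \ref{prop:modulationspaces}) rather than through a vector-valued expansion of $\psi$ itself. Once that interchange is justified, the remaining manipulations are the routine weighted Young estimate.
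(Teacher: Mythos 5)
Your proof is correct and follows exactly the standard argument behind the paper's citation: the paper itself gives no independent proof of Proposition \ref{prop:rankonecase} but defers to equation (11.33) in Gr\"ochenig's book, whose derivation is precisely your change-of-window pointwise estimate $|V_\phi\psi|\leq |V_{\varphi_0}\psi|\ast|V_{\varphi_0}\phi|^\vee$ combined with the weighted Young inequality $L^{p,q}_m \ast L^1_v \subseteq L^{p,q}_m$ (the paper's \eqref{eq:mixedyoung}). Your care in justifying the pointwise identity for $\psi\in(M^1_v(\Rd))'$ via the norm-convergent (Bochner) expansion of $\pi(z)\phi$ in $M^1_v(\Rd)$, rather than expanding $\psi$ itself, is exactly the right way to handle the duality, and your constant matches the one asserted.
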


\section{Classes of operators for time-frequency analysis} \label{sec:operators}
Our main result rests upon properties of certain classes of operators, all of which may be described as integral operators.
\subsection{Hilbert-Schmidt operators} \label{sec:hilbertschmidt}

 Given a function $k\in L^2(\Rdd)$, we define the bounded integral operator $T_k:L^2(\Rd)\to L^2(\Rd)$ by
\begin{equation*}
  T_k(\psi)(x)=\int_{\Rd} k(x,y)\psi(y) \ dy \quad \text{ for } \psi \in L^2(\Rd).
\end{equation*}
We call $k$ the integral kernel of the operator $T_k$. When equipped with the inner product 
\begin{equation*}
  \inner{T_{k_1}}{T_{k_2}}_{\HS}:= \inner{k_1}{k_2}_{L^2},
\end{equation*}
 the set of integral operators $T_k$ with integral kernels $k\in L^2(\Rdd)$ forms a Hilbert space of compact operators called the \textit{Hilbert-Schmidt operators}, which we will denote by $\HS$. Given $T\in \HS$, we will sometimes denote its integral kernel by $\kernel_T$, which means that $T=T_{\kernel_T}$. An important subspace of $\HS$ is the space of \textit{trace class operators}, consisting of those $T\in \HS$ such that 
\begin{equation*}
  \sum_{n=1}^\infty \inner{|T|e_n}{e_n}_{L^2}<\infty,
\end{equation*}
where $\{e_n\}_{n=1}^\infty$ is any orthonormal basis of $L^2(\Rd)$ and $|T|$ is the positive part in the polar decomposition of $T$. If $T$ is a trace class operator, we may therefore define its \textit{trace} $\tr(T)$ by 
\begin{equation*}
  \tr(T)=\sum_{n=1}^\infty \inner{Te_n}{e_n}_{L^2},
\end{equation*}
 which can be shown to be independent of the orthonormal basis. For our part, we will need that if $S,T\in \HS$, then $ST$ is a trace class operator. In particular, this allows us to express the inner product on $\HS$ without reference to their kernels as integral operators, as one may show (see \cite[Thm. 269]{deGosson:2011}) that 
\begin{equation*}
  \inner{S}{T}_{\HS} = \tr(ST^*).
\end{equation*}

\subsection{A space of nuclear operators}
Both Hilbert-Schmidt and trace class operators will often be too large spaces for our purposes. We therefore introduce a Banach subspace of $\HS$ more adapted to the needs of time-frequency analysis. Let $v$ be a submultiplicative weight function. The space we will need is the space $\mathcal{N}(L^2;M^1_v)$ consisting of all \textit{nuclear} operators $T:L^2(\Rd)\to M^1_v(\Rd)$. An operator $T:L^2(\Rd)\to M^1_v(\Rd)$ is said to be nuclear \cite{Ryan:2002} if it has an expansion of the form
\begin{equation} \label{eq:nucleardefinition}
  T=\sum_{n=1}^\infty \phi_n \otimes \xi_n,
\end{equation}
where $\phi\otimes \psi$ denotes the rank-one operator 
\begin{equation*}
\phi\otimes \psi (\xi)=\inner{\xi}{\psi}_{L^2}\phi
\end{equation*}
 and $\sum_{n=1}^\infty \|\phi_n\|_{M^1_v} \|\xi_n\|_{L^2}<\infty$. The space $\mathcal{N}(L^2;M^1_v)$ becomes a Banach space with norm given by 
\begin{equation} \label{eq:nuclearnorm}
  \|T\|_{\mathcal{N}}:= \inf \left\{ \sum_{n=1}^\infty \|\phi_n\|_{M^1_v} \|\xi_n\|_{L^2} \right\},
\end{equation}
where the infimum is taken over all decompositions as in \eqref{eq:nucleardefinition}. 
It can be shown that if $\phi \in M^1_v(\Rd)$ and $\psi \in L^2(\Rd)$, then 
\begin{equation} \label{eq:nuclearnormrankone}
  \|\phi\otimes \psi\|_{\mathcal{N}} = \|\phi\|_{M^1_v} \|\psi\|_{L^2},
\end{equation}
hence the expansion in \eqref{eq:nucleardefinition} converges absolutely in $\nuc$. Using the expansion in \eqref{eq:nucleardefinition} it is straightforward to check that the inclusion of $\nuc$ into $\mathcal{L}(L^2;M^1_v)$ is continuous, i.e. 
\begin{equation} \label{eq:opnormvsnuclear}
  \|T\|_{\mathcal{L}(L^2;M^1_v)} \leq \|T\|_{\mathcal{N}},
\end{equation}
 and that if $S\in \nuc$, $T\in \bo$ and $R\in \mathcal{L}(M^1_v)$, then $RST\in \nuc$.

We will need the following simple property.
\begin{lem} \label{lem:translatenuclear}
	Let $T\in \nuc$ for a submultiplicative weight function $v$ and let $z\in \Rdd$. Then $\pi(z)T\pi(z)^*\in \nuc$ with $\|\pi(z)T\pi(z)^*\|_{\mathcal{N}}\leq v(z) \|T\|_{\mathcal{N}}$. 
\end{lem}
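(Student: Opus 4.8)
The plan is to reduce everything to rank-one operators, on which the time-frequency shifts act transparently, and then to control the resulting expansion using the two estimates already available in the excerpt. First I would fix a nuclear expansion $T=\sum_{n=1}^\infty \phi_n\otimes\xi_n$ as in \eqref{eq:nucleardefinition}, with $\phi_n\in M^1_v(\Rd)$, $\xi_n\in L^2(\Rd)$ and $\sum_n\|\phi_n\|_{M^1_v}\|\xi_n\|_{L^2}<\infty$, and examine how conjugation by $\pi(z)$ interacts with a single rank-one term. Using that $\pi(z)$ is unitary on $L^2(\Rd)$, a direct computation gives
$$\pi(z)(\phi\otimes\psi)\pi(z)^* = (\pi(z)\phi)\otimes(\pi(z)\psi),$$
since for any $\xi\in L^2(\Rd)$ one has $\pi(z)(\phi\otimes\psi)\pi(z)^*\xi=\inner{\pi(z)^*\xi}{\psi}_{L^2}\pi(z)\phi=\inner{\xi}{\pi(z)\psi}_{L^2}\pi(z)\phi$.

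Applying this identity termwise, I would obtain the candidate expansion $\pi(z)T\pi(z)^*=\sum_{n=1}^\infty (\pi(z)\phi_n)\otimes(\pi(z)\xi_n)$. To confirm that this is a genuine nuclear decomposition — and hence that $\pi(z)T\pi(z)^*\in\nuc$ — I would bound its terms with the boundedness of $\pi(z)$ on $M^1_v(\Rd)$ from \eqref{eq:tfbound}, giving $\|\pi(z)\phi_n\|_{M^1_v}\leq v(z)\|\phi_n\|_{M^1_v}$, together with the unitarity of $\pi(z)$ on $L^2(\Rd)$, giving $\|\pi(z)\xi_n\|_{L^2}=\|\xi_n\|_{L^2}$. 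Multiplying and summing yields $\sum_n\|\pi(z)\phi_n\|_{M^1_v}\|\pi(z)\xi_n\|_{L^2}\leq v(z)\sum_n\|\phi_n\|_{M^1_v}\|\xi_n\|_{L^2}<\infty$, so by \eqref{eq:nuclearnormrankone} the expansion converges absolutely in $\nuc$ and defines a nuclear operator.

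Finally, by the definition \eqref{eq:nuclearnorm} of the nuclear norm as an infimum over expansions, the same chain of inequalities gives $\|\pi(z)T\pi(z)^*\|_{\mathcal{N}}\leq v(z)\sum_n\|\phi_n\|_{M^1_v}\|\xi_n\|_{L^2}$ for the chosen expansion; taking the infimum over all nuclear expansions of $T$ on the right-hand side then produces the claimed bound $\|\pi(z)T\pi(z)^*\|_{\mathcal{N}}\leq v(z)\|T\|_{\mathcal{N}}$.

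I do not expect a serious obstacle here, as the statement is essentially bookkeeping. The only points requiring a little care are the rank-one identity, where one must track the adjoints and the antilinearity in the second slot of the inner product, and the observation that conjugation carries each nuclear expansion of $T$ to a nuclear expansion of $\pi(z)T\pi(z)^*$, which is what legitimizes passing to the infimum in the last step.
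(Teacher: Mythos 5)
Your proposal is correct and follows essentially the same route as the paper's proof: expand $T$ into rank-one terms, conjugate termwise to obtain $\pi(z)T\pi(z)^*=\sum_{n=1}^\infty \pi(z)\phi_n\otimes\pi(z)\xi_n$, estimate via \eqref{eq:tfbound} and unitarity of $\pi(z)$ on $L^2(\Rd)$, and pass to the infimum in \eqref{eq:nuclearnorm}. Your explicit verification of the rank-one conjugation identity is a detail the paper leaves implicit, but the argument is otherwise identical.
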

\begin{proof}
	If $T$ has an expansion 
\begin{equation*}
  T=\sum_{n=1}^\infty \phi_n \otimes \xi_n
\end{equation*}
 where $\sum_{n=1}^\infty \|\phi_n\|_{M^1_v} \|\xi_n\|_{L^2}<\infty$, then 
\begin{equation*}
  \pi(z)T\pi(z)^*=\sum_{n=1}^\infty \pi(z)\phi_n \otimes \pi(z)\xi_n,
\end{equation*}
 and $$\sum_{n=1}^\infty \|\pi(z)\phi_n\|_{M^1_v} \|\pi(z)\xi_n\|_{L^2}\leq v(z) \sum_{n=1}^\infty \|\phi_n\|_{M^1_v} \|\xi_n\|_{L^2}$$ by \eqref{eq:tfbound} and the fact that $\pi(z)$ is unitary on $L^2(\Rd)$. The norm inequality then follows from the definition \eqref{eq:nuclearnorm} of the nuclear norm.
\end{proof}
To be more precise, the class of operators we will be interested in are those $S\in \bo$ such that $S^*\in \nuc$, where $S^*$ is the Hilbert space adjoint of $S$.
 We can give a much more concrete description of  this condition by noting that if $\sum_{n=1}^\infty \|\xi_n\|_{L^2} \|\phi_n\|_{M^1_v}<\infty$, then $S^*=\sum_{n=1}^\infty \xi_n \otimes \phi_n$ if and only if $S=\sum_{n=1}^\infty \phi_n \otimes \xi_n$. Hence \eqref{eq:nucleardefinition} gives that $S^*\in \nuc$ if and only if 
\begin{equation} \label{eq:nuclearexpansion}
  S=\sum_{n=1}^\infty \xi_n \otimes \phi_n
\end{equation}
 with $\sum_{n=1}^\infty \|\xi_n\|_{L^2} \|\phi_n\|_{M^1_v}<\infty$.
Abusing notation slightly, we will write $S^*\in \nuc$ to denote that $S\in \bo$ and $S^*\in \nuc$.

\begin{lem} \label{lem:nuclerproperties}
	Let $S^*\in \nuc$ for a submultiplicative weight function $v$. Then $S$ extends to a bounded operator $\tilde{S}:M^\infty_{1/v}(\Rd) \to L^2(\Rd)$ with $\|\tilde{S}\|_{\mathcal{L}(M^\infty_{1/v},L^2)}\leq \|S^*\|_{\nuc}$ by defining 
\begin{equation} \label{eq:extension}
  \inner{\tilde{S}\psi}{\phi}_{L^2}:= \inner{\psi}{S^*\phi}_{M^\infty_{1/v},M^1_v} \quad \text{ for } \psi\in M^\infty_{1/v}(\Rd), \phi \in L^2(\Rd). 
\end{equation}
Furthermore, given an expansion of $S$ of the form \eqref{eq:nuclearexpansion}, this extension satisfies
\begin{equation} \label{eq:extendedexpansion}
  \tilde{S}(\psi)=\sum_{n=1}^\infty \inner{\psi}{\phi_n}_{M^\infty_{1/v},M^1_v} \xi_n,
\end{equation}
where the sum converges absolutely in $L^2(\Rd).$
\end{lem}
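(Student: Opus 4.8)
The plan is to verify that \eqref{eq:extension} genuinely defines a bounded operator $\tilde{S}$ and that it admits the concrete expansion \eqref{eq:extendedexpansion}. First I would check that the right-hand side of \eqref{eq:extension} makes sense: since $S^*\in \nuc$ means $S^*$ maps $L^2(\Rd)$ into $M^1_v(\Rd)$, for $\phi\in L^2(\Rd)$ we have $S^*\phi \in M^1_v(\Rd)$, and then the duality bracket $\inner{\psi}{S^*\phi}_{M^\infty_{1/v},M^1_v}$ is well-defined for $\psi\in M^\infty_{1/v}(\Rd)$ because $M^\infty_{1/v}(\Rd)=(M^1_v(\Rd))'$. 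For fixed $\psi$ this is an antilinear functional of $\phi$, and I would bound it by
\begin{equation*}
  |\inner{\psi}{S^*\phi}_{M^\infty_{1/v},M^1_v}| \leq \|\psi\|_{M^\infty_{1/v}} \|S^*\phi\|_{M^1_v} \leq \|\psi\|_{M^\infty_{1/v}} \|S^*\|_{\nuc} \|\phi\|_{L^2},
\end{equation*}
using \eqref{eq:opnormvsnuclear} in the last step. By the Riesz representation theorem this functional is realized by a unique vector $\tilde{S}\psi\in L^2(\Rd)$, and the displayed bound gives $\|\tilde{S}\psi\|_{L^2}\leq \|S^*\|_{\nuc}\|\psi\|_{M^\infty_{1/v}}$, establishing the operator bound.

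Next I would establish the expansion \eqref{eq:extendedexpansion}. Starting from an expansion $S=\sum_{n} \xi_n \otimes \phi_n$ as in \eqref{eq:nuclearexpansion}, the adjoint is $S^*=\sum_{n} \phi_n \otimes \xi_n$, so for $\phi\in L^2(\Rd)$ we have $S^*\phi=\sum_n \inner{\phi}{\xi_n}_{L^2}\phi_n$. Inserting this into \eqref{eq:extension} and using continuity of the duality bracket to exchange the sum and the bracket gives
\begin{equation*}
  \inner{\tilde{S}\psi}{\phi}_{L^2}=\sum_{n=1}^\infty \inner{\psi}{\phi_n}_{M^\infty_{1/v},M^1_v}\overline{\inner{\phi}{\xi_n}}_{L^2}=\inner*{\sum_{n=1}^\infty \inner{\psi}{\phi_n}_{M^\infty_{1/v},M^1_v}\xi_n}{\phi}_{L^2}.
\end{equation*}
Since this holds for all $\phi\in L^2(\Rd)$, formula \eqref{eq:extendedexpansion} follows. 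To justify the absolute convergence of the series in $L^2(\Rd)$, I would estimate $\sum_n |\inner{\psi}{\phi_n}_{M^\infty_{1/v},M^1_v}|\,\|\xi_n\|_{L^2}\leq \|\psi\|_{M^\infty_{1/v}}\sum_n \|\phi_n\|_{M^1_v}\|\xi_n\|_{L^2}$, which is finite by the summability in \eqref{eq:nuclearexpansion}; taking the infimum over expansions recovers $\|\psi\|_{M^\infty_{1/v}}\|S^*\|_{\nuc}$ as an upper bound and is consistent with the operator bound above.

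The last point to address is that $\tilde{S}$ really extends $S$, i.e. that the two agree on $L^2(\Rd)\cap M^\infty_{1/v}(\Rd)$. Here I would invoke the remark following Proposition \ref{prop:modulationspaces}, namely that the duality bracket $\inner{\psi}{S^*\phi}_{M^\infty_{1/v},M^1_v}$ reduces to the $L^2$ inner product $\inner{\psi}{S^*\phi}_{L^2}$ when $\psi\in L^2(\Rd)$; this equals $\inner{S\psi}{\phi}_{L^2}$ by the definition of the Hilbert space adjoint, so $\tilde{S}\psi=S\psi$ for such $\psi$. I do not expect any single step to be a serious obstacle, since the argument is essentially a bookkeeping exercise in duality combined with the Riesz representation theorem; the only point requiring genuine care is the interchange of the infinite sum with the duality bracket, which is justified by the absolute convergence of $\sum_n \|\phi_n\|_{M^1_v}\|\xi_n\|_{L^2}$ together with the continuity of the bracket in each argument.
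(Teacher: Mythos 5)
Your proposal is correct and follows essentially the same route as the paper: defining $\tilde{S}$ via \eqref{eq:extension} amounts to taking the Banach space adjoint of $S^*:L^2(\Rd)\to M^1_v(\Rd)$, which you realize explicitly through the Riesz representation theorem, and your interchange of the sum with the duality bracket (justified by absolute convergence of $\sum_n \|\phi_n\|_{M^1_v}\|\xi_n\|_{L^2}$) is exactly the ``straightforward calculation'' the paper leaves to the reader. The verification that $\tilde{S}$ extends $S$, using that the duality bracket restricts to the $L^2$ inner product, also matches the paper's argument.
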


\begin{proof}
The definition \eqref{eq:extension} simply means that $\tilde{S}$ is the Banach space adjoint of $S^*:L^2(\Rd)\to M^1_v(\Rd)$, hence $\tilde{S}$ is well-defined. Since for $\psi  \in L^2(\Rd)$ we have $$\inner{S\psi}{\phi}_{L^2}=\inner{\psi}{S^*\phi}_{L^2}=\inner{\psi}{S^*\phi}_{M^\infty_{1/v},M^1_v},$$ we see that $\tilde{S}$ extends $S$. The absolute convergence of the sum in \eqref{eq:extendedexpansion} follows directly from \eqref{eq:nuclearexpansion}. To show that the decomposition into rank-one operators still holds for $\tilde{S}$, we need to show that 
\begin{equation*}
  \inner{\sum_{n=1}^\infty \inner{\psi}{\phi_n}_{M^\infty_{1/v},M^1_v} \xi_n}{\phi}_{L^2}= \inner{\psi}{S^*\phi}_{M^\infty_{1/v},M^1_v} \quad \text{ for } \psi\in M^\infty_{1/v}(\Rd), \phi \in L^2(\Rd),
\end{equation*}
which is a straightforward calculation using the expansion of $S^*$ in \eqref{eq:nucleardefinition} and the fact that all expansions converge absolutely in an appropriate Banach space, so that we may take the duality brackets inside the sum. The details are left for the reader. 
\end{proof}
In what follows we will simply denote the extension $\tilde{S}$ by $S$. Note that if $S^*\in \nuc$, $R\in \bo$ and $T^*\in \mathcal{L}(M^1_v)$, then $(RST)^* \in \nuc$, as follows from using \eqref{eq:nuclearexpansion}.

The fact that we use the Hilbert space $L^2(\Rd)$ is not strictly necessary. We could have considered any separable Hilbert space $\mathcal{H}$, and required that $S\in \mathcal{L}(L^2,\mathcal{H})$ with $S^*\in \mathcal{N}(\mathcal{H},M^1_v)$. The result above would still hold, as would the main result of this paper. Our reason for considering $\mathcal{H}=L^2(\Rd)$ is that it gives us easier access to non-trivial examples, as it allows us to formulate our results in terms of integral operators as we explain in detail in the next subsection. 

\subsubsection{The projective tensor product}

The theory of nuclear operators is closely related to the projective tensor product of Banach spaces, as explained for instance in \cite{Ryan:2002}, which leads to a useful connection to integral operators. Abstractly, the projective tensor product $X\hat{\otimes}Y$  of two Banach spaces $X,Y$ is the completion of the algebraic tensor product $X\otimes Y$ with respect to the norm 
\begin{equation*}
\|u\|_{X\hat{\otimes}Y}=\inf \left\{\sum_{n=1}^N \|x_n\|_{X} \|y_n\|_{Y} :u=\sum_{k=n}^N x_n\otimes y_n \right\}.
\end{equation*}
One can show (see \cite[Prop. 2.8]{Ryan:2002}) that $X\hat{\otimes}Y$ consists precisely of elements $\sum_{n=1}^\infty x_n\otimes y_n$ such that $\sum_{n=1}^\infty \|x_n\|_{X} \|y_n\|_{Y} <\infty.$ 

When $X$ and $Y$ are function spaces on $\Rd$, which is the case we will consider, we identify the elementary tensors $x\otimes y$ for $x\in X$ and $y\in Y$ with the function $x\otimes y(s,t)=x(s)y(t)$. For instance, we identify $L^2(\Rd)\hat{\otimes}L^2(\Rd)$ with all functions $\Psi\in L^2(\Rdd)$ such that $\Psi(s,t)=\sum_{n=1}^\infty \xi_n(s) \psi_n(t)$ with $\sum_{n=1}^\infty \|\xi_n\|_{L^2} \|\psi_n\|_{L^2}<\infty$. 

Now assume that the integral kernel $k_T$ of $T\in \HS$ belongs to $X\hat{\otimes} Y$ for Banach function spaces $X,Y\subset L^2(\Rd)$. By definition, this means that we have a decomposition 
\begin{equation*}
k_T(s,t)=\sum_{n=1}^\infty x_n(s)y_n(t)
\end{equation*}
with $\sum_{n=1}^\infty \|x_n\|_X \|y_n\|_Y<\infty$. A simple calculation then shows that
\begin{equation*}
T= \sum_{n=1}^\infty x_n\otimes \overline{y_n},
\end{equation*}
where $x_n\otimes \overline{y_n}$ now denotes a rank-one operator.
Hence\footnote{Since $L^2(\Rd)$ and all other function spaces we consider are invariant under complex conjugation, we need not pay any attention to the fact that $\overline{y_n}$ appears in place of $y_n$.} if we apply this to $X=M^1_v(\Rd)$ and $Y=L^2(\Rd)$, we see that $k_T\in M^1(\Rd)\hat{\otimes}L^2(\Rd)$ is equivalent to $T$ having an expansion of the form \eqref{eq:nucleardefinition} -- i.e. $k_T\in M^1(\Rd)\hat{\otimes}L^2(\Rd)$ if and only if $T\in \nuc$.
\begin{rem}
The map $k_T \mapsto T$ is actually a Banach space isomorphism from $M^1(\Rd)\hat{\otimes}L^2(\Rd)$ to $\nuc$. Surjectivity and boundedness follow from above. Injectivity is not too difficult to show in this case, but for more general  Banach spaces $X$ and $Y$ the injectivity of the natural map from $X \hat{\otimes} Y^*$ onto $\mathcal{N}(Y,X)$ boils down to the approximation property for Banach spaces \cite[Cor. 4.8]{Ryan:2002}.
\end{rem}
The slightly awkward condition $T^* \in \nuc$ may similarly be reformulated as requiring $k_T\in L^2(\Rd)\hat{\otimes} M^1(\Rd)$, this is essentially the content of \eqref{eq:extendedexpansion}. This condition cannot be reformulated as nuclearity of $T$, which is why we have opted for phrasing it as $T^* \in \nuc$. We also mention that there is a natural isomorphism $L^2(\Rd)\hat{\otimes}M^1_v(\Rd) \cong M^1_v(\Rd)\hat{\otimes} L^2(\Rd)$ extending the map $\xi\otimes \phi \mapsto \phi \otimes \xi$ for $\xi \in L^2(\Rd),\phi \in M^1_v(\Rd).$

Formulating our assumption on $T$ by requiring $k_T$ to belong to some projective tensor product makes it possible to relate $T^*\in \nuc$ to other spaces of operators. For instance, we may identify the trace class operators $\mathcal{S}$ as the operators $S\in \HS$ such that $k_S$ belongs to the projective tensor product $L^2(\Rd)\hat{\otimes} L^2(\Rd)$, which clearly contains $M^1_v(\Rd)\hat{\otimes} L^2(\Rd)$ as a subset since $M^1_v(\Rd)\hookrightarrow L^2(\Rd)$. 

Finally, those $T\in \HS$ with kernel $k_T$ in $M^1_v(\Rd)\hat{\otimes} M^1_v(\Rd)\subset M^1(\Rd)\hat{\otimes} L^2(\Rd)$ have also been studied recently in \cite{Skrettingland:2019}, where this space of operators is denoted by $\beauty_{v\otimes v}$. It follows by \cite[Thm. 5]{Balazs:2019} that  
\begin{equation} \label{eq:projtensorfeichtinger}
M^1_v(\Rd)\hat{\otimes} M^1_v(\Rd)= M^1_{v\tilde{\otimes}v}(\Rdd),
\end{equation}
with equivalent norms, where $v\tilde{\otimes}v(x_1,x_2,\omega_1,\omega_2)=v(x_1,\omega_1)\cdot v(x_2,\omega_2)$. The particular case $\beauty := \beauty_{1\otimes 1}$ corresponding to $v\equiv 1$ has been studied in several other sources, see for instance \cite{Feichtinger:1998,Feichtinger:2018}. 

We summarize this discussion, which essentially amounts to prodding the definitions in various ways, in a proposition.

\begin{prop} \label{prop:projectivetensor}
	Given $T\in \HS$ and a submultiplicative weight function $v$. Then
	\begin{align*}
T\in \nuc &\iff k_T \in M^1_v(\Rd)\hat{\otimes}L^2(\Rd), \\
T^* \in \nuc &\iff k_T \in L^2(\Rd)\hat{\otimes}M^1_v(\Rd).
\end{align*}
At the level of $k_T$ we have the chain of inclusions 
{\small $$M^1_v(\Rd)\hat{\otimes}M^1_v(\Rd)\subset L^2(\Rd)\hat{\otimes}M^1_v(\Rd), M^1_v(\Rd)\hat{\otimes} L^2(\Rd)\subset L^2(\Rd)\hat{\otimes} L^2(\Rd)\subset L^2(\Rdd),$$} which at the operator level leads to the inclusions $$ \beauty_{v\otimes v}\subset \nuc \subset \mathcal{S}\subset \HS.$$
The same inclusion holds when $\nuc$ is replaced by the set of operators $T$ such that $T^*\in \nuc$.
\end{prop}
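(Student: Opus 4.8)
The plan is to reduce everything to the kernel--operator correspondence $k_T \leftrightarrow T$ recorded just before the statement, together with the functoriality of the projective tensor product. First I would dispatch the two equivalences: the passage from a kernel expansion $k_T = \sum_n x_n \otimes y_n$ to the operator expansion $T = \sum_n x_n \otimes \overline{y_n}$ preserves the relevant summability $\sum_n \|x_n\|_X \|y_n\|_Y$, since all function spaces in play are invariant under complex conjugation. Taking $(X,Y) = (M^1_v(\Rd), L^2(\Rd))$ and comparing with \eqref{eq:nucleardefinition} gives $T \in \nuc \iff k_T \in M^1_v(\Rd) \hat{\otimes} L^2(\Rd)$, while taking $(X,Y) = (L^2(\Rd), M^1_v(\Rd))$ and comparing with \eqref{eq:nuclearexpansion} gives $T^* \in \nuc \iff k_T \in L^2(\Rd) \hat{\otimes} M^1_v(\Rd)$. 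This is essentially the content of the paragraph preceding the proposition, so I would phrase it as a brief reminder rather than a fresh argument.

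Next I would establish the kernel-level inclusions. The single nontrivial ingredient is the continuous inclusion $M^1_v(\Rd) \hookrightarrow L^2(\Rd)$, which I would read off from Proposition \ref{prop:modulationspaces}(b) in the case $p_1 = q_1 = 1$, $p_2 = q_2 = 2$, $m_1 = v$, $m_2 \equiv 1$ (using $1 \leq v$) together with $L^2 = M^2$. Combining this with the elementary monotonicity of the projective norm under continuous inclusions of either factor --- if $X_1 \hookrightarrow X_2$ and $Y_1 \hookrightarrow Y_2$ then $X_1 \hat{\otimes} Y_1 \hookrightarrow X_2 \hat{\otimes} Y_2$, immediate from the infimum defining the norm --- yields, by replacing one $M^1_v$ factor by $L^2$ at a time, the diamond of inclusions among $M^1_v \hat{\otimes} M^1_v$, $L^2 \hat{\otimes} M^1_v$, $M^1_v \hat{\otimes} L^2$ and $L^2 \hat{\otimes} L^2$. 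The terminal inclusion $L^2(\Rd) \hat{\otimes} L^2(\Rd) \subset L^2(\Rdd)$ then comes from the isometric identity $\|x \otimes y\|_{L^2(\Rdd)} = \|x\|_{L^2}\|y\|_{L^2}$ for elementary tensors, which forces the defining sums to converge in $L^2(\Rdd)$.

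Finally I would transport the kernel inclusions through the correspondence to obtain the operator inclusions: the kernels of $\beauty_{v\otimes v}$, $\nuc$, $\mathcal{S}$ and $\HS$ lie respectively in $M^1_v \hat{\otimes} M^1_v$, $M^1_v \hat{\otimes} L^2$, $L^2 \hat{\otimes} L^2$ and $L^2(\Rdd)$, so the chain $\beauty_{v\otimes v} \subset \nuc \subset \mathcal{S} \subset \HS$ is simply the image of the corresponding kernel inclusions, while the variant with $\{T : T^* \in \nuc\}$ (whose kernels lie in $L^2 \hat{\otimes} M^1_v$ by the second equivalence) uses the other branch $M^1_v \hat{\otimes} M^1_v \subset L^2 \hat{\otimes} M^1_v \subset L^2 \hat{\otimes} L^2$ of the diamond. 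I do not expect a genuine obstacle: the statement is a bookkeeping exercise in the definitions, and the only non-formal inputs --- functoriality of $\hat{\otimes}$ and $M^1_v \hookrightarrow L^2$ --- are standard. The one point deserving a word of care is that the conjugation $y_n \mapsto \overline{y_n}$ in the kernel-to-operator passage does not disturb membership in $M^1_v(\Rd)$, which is exactly why conjugation-invariance of that space is worth flagging.
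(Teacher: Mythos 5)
Your proposal is correct and follows essentially the same route as the paper, which proves the proposition simply by pointing back to the preceding discussion: the kernel--operator correspondence $k_T \leftrightarrow T$ (with the conjugation $y_n \mapsto \overline{y_n}$ handled by conjugation-invariance), the identification of $\nuc$ and $\{T : T^* \in \nuc\}$ via \eqref{eq:nucleardefinition} and \eqref{eq:extendedexpansion}, and the inclusion $M^1_v(\Rd) \hookrightarrow L^2(\Rd)$ transported through the projective tensor product. Your added details (deriving $M^1_v \hookrightarrow L^2$ from Proposition \ref{prop:modulationspaces}(b) with $L^2 = M^2$ and $v \geq 1$, and the functoriality of $\hat{\otimes}$ under continuous inclusions) are exactly the routine verifications the paper leaves implicit.
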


\subsection{Examples of nuclear operators}
The connection to the projective tensor product allows us to write down some examples of $S^*\in \nuc$. 

\begin{exmp} \label{exmp:schwartz}
	The \textit{Schwartz operators} $\mathfrak{S}$ are those integral operator $T_k$ on $L^2(\Rd)$ such that $k\in \mathscr{S}(\Rdd)$\cite{Keyl:2015}. If the submultiplicative weight $v$ grows at most polynomially, then so does the weight function $v\tilde{\otimes}v(x_1,x_2,\omega_1,\omega_2)=v(x_1,\omega_1)\cdot v(x_2,\omega_2)$ on $\R^{4d}$, hence we know that $\mathscr{S}(\Rdd)\hookrightarrow M^1_{v\tilde{\otimes}v}(\Rdd)\cong M^1_v(\Rd) \hat{\otimes} M^1_v(\Rd)$. It follows that $\mathfrak{S}\subset \beauty_{v\otimes v}\subset \nuc$. It is also straightforward to check that $S\in \mathfrak{S} \iff S^*\in \mathfrak{S}.$
\end{exmp}

\begin{exmp}[The Feichtinger algebra and the inner kernel theorem] 
	By Proposition \ref{prop:projectivetensor} we know that $\beauty_{v\otimes v}\subset \nuc$, where $T\in \beauty_{v\otimes v}$ if $k_T\in M^1_{v\tilde{\otimes}v}(\Rdd)\cong M^1_v(\Rd)\hat{\otimes} M^1_v(\Rd)$. This class of operators was recently studied in \cite{Skrettingland:2019}, where the reader may find a proof that $T$ belongs to this space if and only if its Hilbert space adjoint $T^*$ does.

The unweighted case $k_T\in M^1(\Rd)\hat{\otimes}M^1(\Rd) = M^1(\Rdd)$ has been studied by several sources \cite{Feichtinger:1998,Kozek:2006,Feichtinger:2018,Skrettingland:2019qhal}. We mention in particular that \cite{Feichtinger:1998,Feichtinger:2018} give a characterization of such operators that is independent of their kernel as an integral operator: Given $T\in \HS$, $\kernel_T \in M^1(\Rdd)$ if and only if $T$ extends to a bounded map $M^\infty(\Rd)\to M^1(\Rd)$ sending weak* convergent sequences to norm-convergent sequences. 
\end{exmp}

We now consider finite rank operators. By choosing $S$ of the form in this example, we will be able to recover Theorem \ref{thm:windowindependent} from our main result.

 \begin{exmp}[Finite rank operators] \label{exmp:finiterank}
 For $N\in \mathbb{N}$, consider $\{\phi_n\}_{n=1}^N\subset M^1_v(\Rd).$ Let $\{\xi_n\}_{n=1}^N$ be an orthonormal set in $L^2(\Rd)$. If we define $S=\sum_{n=1}^N \xi_n \otimes \phi_n$, we clearly have $S^* \in \nuc$. This $S$ is just a convenient way of storing the functions $\phi_n$ in an operator -- by applying $S$ to $\xi_m$ for $1\leq m \leq N$ we recover $\phi_m$.   
 
 \end{exmp}

\subsubsection{Localization operators} \label{sec:locop}

We also have some methods for producing new examples of operators in $\nuc$ from known examples. As $\nuc$ is a normed space we may of course take linear combinations, but a more interesting method is to use the quantum convolutions introduced by Werner\cite{Werner:1984}. Given $f\in L^1(\Rdd)$ and a trace class operator $S\in \tco$, the \textit{convolution} of $f$ with $S$ is defined to be the trace class operator $f\star S$ given by the Bochner integral
\begin{equation} \label{eq:opconv}
  f\star S:= \int_{\Rdd} f(z) \pi(z)S\pi(z)^* \ dz.
\end{equation}
 In particular, if we pick $S$ to be a rank-one operator $\varphi_2 \otimes \varphi_1$ for $\varphi_1,\varphi_2\in L^2(\Rd)$, we find that $$f\star (\varphi_2 \otimes \varphi_1)=\mathcal{A}^{\varphi_1,\varphi_2}_f,$$ where $\mathcal{A}^{\varphi_1,\varphi_2}_f$ is the \textit{time-frequency localization operator} \cite{Daubechies:1988,Cordero:2003} given by
\begin{equation*}
  \mathcal{A}^{\varphi_1,\varphi_2}_f(\psi)=\int_{\Rdd} f(z) V_{\varphi_1}\psi(z) \pi(z) \varphi_2 \ dz \quad \text{ for } \psi \in L^2(\Rd). 
\end{equation*}

\begin{prop} 
	If $S\in \nuc$ and $f\in L^1_v(\Rdd)$, then $f\star S \in \nuc$ with $\|f\star S\|_{\mathcal{N}}\leq \|f\|_{L^1_v} \|S\|_{\mathcal{N}}$. In particular, if $\varphi_1\in L^2(\Rd)$ and $\varphi_2\in M^1_v(\Rd)$, then $\mathcal{A}_f^{\varphi_1,\varphi_2}\in \nuc$, with $\|\mathcal{A}_f^{\varphi_1,\varphi_2}\|_{\mathcal{N}}\leq \|f\|_{L^1_v} \|\varphi_1\|_{L^2} \|\varphi_2\|_{M^1_v} $.
\end{prop}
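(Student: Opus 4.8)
The plan is to recognize $f\star S$ as a Bochner integral taking values in the Banach space $\nuc$ itself, rather than merely in $\tco$ as in the original definition \eqref{eq:opconv}, and then to read off both the membership and the norm bound from the standard properties of the Bochner integral. The crucial input is Lemma \ref{lem:translatenuclear}, which guarantees that the integrand $z\mapsto f(z)\,\pi(z)S\pi(z)^*$ takes values in $\nuc$ and obeys the pointwise estimate $\|f(z)\,\pi(z)S\pi(z)^*\|_{\mathcal{N}}\leq |f(z)|\,v(z)\,\|S\|_{\mathcal{N}}$.

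Granting strong measurability of this integrand (addressed below), I would first establish Bochner integrability in $\nuc$ by integrating the pointwise bound: $\int_{\Rdd}\|f(z)\,\pi(z)S\pi(z)^*\|_{\mathcal{N}}\,dz\leq \|S\|_{\mathcal{N}}\int_{\Rdd}|f(z)|\,v(z)\,dz=\|f\|_{L^1_v}\|S\|_{\mathcal{N}}<\infty$. Completeness of $\nuc$ then produces an element $f\star S\in\nuc$, and the standard inequality $\|\int g\|_{\mathcal{N}}\leq\int\|g\|_{\mathcal{N}}$ for Bochner integrals yields exactly $\|f\star S\|_{\mathcal{N}}\leq\|f\|_{L^1_v}\|S\|_{\mathcal{N}}$. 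Since the inclusion $\nuc\hookrightarrow\tco$ is continuous by Proposition \ref{prop:projectivetensor}, and continuous linear maps commute with Bochner integration, the element obtained this way is carried onto the trace-class integral \eqref{eq:opconv}; hence the two notions of $f\star S$ agree and the previously defined operator indeed lies in $\nuc$.

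The one genuinely nontrivial point, and the step I expect to be the main obstacle, is the strong measurability of $z\mapsto \pi(z)S\pi(z)^*$ in the $\nuc$-norm, which I would obtain by showing this map is continuous. For a rank-one $S=\phi\otimes\xi$ with $\phi\in M^1_v(\Rd)$ and $\xi\in L^2(\Rd)$ one has $\pi(z)S\pi(z)^*=\pi(z)\phi\otimes\pi(z)\xi$, so continuity follows from the (standard) strong continuity of $z\mapsto\pi(z)\phi$ in $M^1_v(\Rd)$ and of $z\mapsto\pi(z)\xi$ in $L^2(\Rd)$, combined with \eqref{eq:nuclearnormrankone}. For general $S\in\nuc$ I would approximate by finite-rank operators in the $\nuc$-norm and use Lemma \ref{lem:translatenuclear} to control the tails: writing $S=S_0+(S-S_0)$ with $S_0$ finite rank and $\|S-S_0\|_{\mathcal{N}}$ small, the two error terms are bounded by $v(z)\|S-S_0\|_{\mathcal{N}}$, uniformly small on any compact set since $v$ is continuous, while the finite-rank part is continuous by the rank-one case. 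This gives continuity on compact sets and hence everywhere. (Alternatively, separability of $\nuc$ together with Pettis's theorem reduces measurability to weak measurability.)

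Finally, for the ``in particular'' statement I would specialize to $S=\varphi_2\otimes\varphi_1$ with $\varphi_2\in M^1_v(\Rd)$ and $\varphi_1\in L^2(\Rd)$, which satisfies $S\in\nuc$. The identity $f\star(\varphi_2\otimes\varphi_1)=\mathcal{A}_f^{\varphi_1,\varphi_2}$ recorded above places the localization operator in $\nuc$, and \eqref{eq:nuclearnormrankone} gives $\|\varphi_2\otimes\varphi_1\|_{\mathcal{N}}=\|\varphi_2\|_{M^1_v}\|\varphi_1\|_{L^2}$, so the general bound specializes to $\|\mathcal{A}_f^{\varphi_1,\varphi_2}\|_{\mathcal{N}}\leq\|f\|_{L^1_v}\|\varphi_1\|_{L^2}\|\varphi_2\|_{M^1_v}$, as claimed.
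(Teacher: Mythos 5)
Your proposal is correct and follows essentially the same route as the paper: the paper also treats $f\star S$ as a Bochner integral in $\nuc$, obtains convergence and the bound $\|f\star S\|_{\mathcal{N}}\leq \|f\|_{L^1_v}\|S\|_{\mathcal{N}}$ from Lemma \ref{lem:translatenuclear} together with the triangle inequality for Bochner integrals, and deduces the localization operator case from $\mathcal{A}_f^{\varphi_1,\varphi_2}=f\star(\varphi_2\otimes\varphi_1)$ and \eqref{eq:nuclearnormrankone}. The only difference is that you explicitly verify strong measurability of $z\mapsto \pi(z)S\pi(z)^*$ in the $\nuc$-norm (via rank-one continuity plus finite-rank approximation) and the consistency with the trace-class integral, details the paper leaves implicit; both verifications are sound.
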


\begin{proof}
	By definition,
\begin{equation*}
  f\star S = \int_{\Rdd} f(z) \pi(z)S\pi(z)^* \ dz.
\end{equation*}
This integral converges as a Bochner integral in $\nuc$, as Lemma \ref{lem:translatenuclear} gives
\begin{equation*}
  \int_{\Rdd}  \|f(z)\pi(z)S\pi(z)^*\|_{\mathcal{N}} \ dz \leq \int_{\Rdd} |f(z)| v(z) \|S\|_{\mathcal{N}} \ dz = \|S\|_{\mathcal{N}} \|f\|_{L^1_v}.
\end{equation*}
The result for $\mathcal{A}_f^{\varphi_1,\varphi_2}$ follows by $\mathcal{A}_f^{\varphi_1,\varphi_2}=f\star (\varphi_2 \otimes \varphi_1)$ and \eqref{eq:nuclearnormrankone}.
\end{proof}
It is easy to check that the Hilbert space adjoint of $f\star S$ is $\overline{f}\star S^*$. Hence we immediately obtain the following.

 \begin{cor} \label{cor:locopbound} 
	If $S^*\in \nuc$ and $f\in L^1_v(\Rdd)$, then $(f\star S)^* \in \nuc$ with $\|(f\star S)^*\|_{\mathcal{N}}\leq \|f\|_{L^1_v} \|S^*\|_{\mathcal{N}}$. In particular, if $\varphi_1\in M^1_v(\Rd)$ and $\varphi_2\in L^2(\Rd)$, then $\left(\mathcal{A}_f^{\varphi_1,\varphi_2}\right)^*\in \nuc$, with $\|\left(\mathcal{A}_f^{\varphi_1,\varphi_2}\right)^*\|_{\mathcal{N}}\leq \|f\|_{L^1_v} \|\varphi_1\|_{M^1_v} \|\varphi_2\|_{L^2} $.
\end{cor}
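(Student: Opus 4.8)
The plan is to treat this corollary as a pure duality consequence of the preceding proposition, exactly as the remark before the statement indicates. The only genuinely nontrivial ingredient is the adjoint identity $(f\star S)^* = \overline{f}\star S^*$; once this is in hand, the two assertions follow by reading off the bounds of the proposition applied to the symbol $\overline{f}$ and the operator $S^*$. Note first that $S^*\in\nuc$ forces $S\in\tco$ (by the inclusion $\nuc\subset\tco$, which by Proposition \ref{prop:projectivetensor} holds equally for operators whose adjoint is nuclear), so the convolution $f\star S$ in \eqref{eq:opconv} is a well-defined trace class operator.

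First I would establish the adjoint identity. Starting from $f\star S = \int_{\Rdd} f(z)\pi(z)S\pi(z)^*\,dz$, I would take Hilbert space adjoints. The adjoint is a continuous antilinear map on $\tco$, so it may be pulled through the Bochner integral, converting the scalar $f(z)$ into $\overline{f(z)}$; since $\pi(z)$ is unitary we have $(\pi(z)S\pi(z)^*)^* = \pi(z)S^*\pi(z)^*$, and therefore $(f\star S)^* = \int_{\Rdd}\overline{f(z)}\,\pi(z)S^*\pi(z)^*\,dz = \overline{f}\star S^*$. The justification that the adjoint commutes with the integral -- the step glossed as ``easy to check'' -- is the one place where care is needed, and I expect it to be the main (and essentially only) obstacle. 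One can either invoke continuity of the adjoint as a bounded antilinear involution on $\tco$, which lets it pass through the strongly convergent Bochner integral, or verify the identity weakly by pairing both sides against an arbitrary rank-one operator and moving the adjoint through the absolutely convergent integral guaranteed by Lemma \ref{lem:translatenuclear}.

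With the identity proved, I would apply the preceding proposition to $S^*\in\nuc$ and the symbol $\overline{f}$. Since $|\overline{f}| = |f|$ pointwise, we have $\overline{f}\in L^1_v(\Rdd)$ with $\|\overline{f}\|_{L^1_v} = \|f\|_{L^1_v}$, so the proposition yields $\overline{f}\star S^*\in\nuc$ together with $\|\overline{f}\star S^*\|_{\mathcal{N}}\leq\|f\|_{L^1_v}\|S^*\|_{\mathcal{N}}$. Combining this with $(f\star S)^* = \overline{f}\star S^*$ gives the first assertion and its norm bound.

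Finally, for the localization-operator statement I would combine $\mathcal{A}_f^{\varphi_1,\varphi_2} = f\star(\varphi_2\otimes\varphi_1)$ with the rank-one adjoint rule $(\varphi_2\otimes\varphi_1)^* = \varphi_1\otimes\varphi_2$, which follows directly from the definition $\phi\otimes\psi(\xi)=\inner{\xi}{\psi}_{L^2}\phi$. The adjoint identity then gives $(\mathcal{A}_f^{\varphi_1,\varphi_2})^* = \overline{f}\star(\varphi_1\otimes\varphi_2) = \mathcal{A}_{\overline{f}}^{\varphi_2,\varphi_1}$, and invoking the ``in particular'' part of the preceding proposition -- now with the $L^2$-window $\varphi_2$ and the $M^1_v$-window $\varphi_1$ -- delivers membership in $\nuc$ with the stated bound $\|f\|_{L^1_v}\|\varphi_1\|_{M^1_v}\|\varphi_2\|_{L^2}$. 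Beyond the Bochner-integral interchange, the only thing to watch is the bookkeeping of antilinearity: ensuring the conjugate lands on $f$ and that the two windows correctly swap their roles.
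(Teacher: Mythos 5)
Your proposal is correct and takes essentially the same route as the paper, which proves the corollary in one line by noting that the Hilbert space adjoint of $f\star S$ is $\overline{f}\star S^*$ and then invoking the preceding proposition together with $\mathcal{A}_f^{\varphi_1,\varphi_2}=f\star(\varphi_2\otimes\varphi_1)$ and \eqref{eq:nuclearnormrankone}. The only difference is that you carefully justify the steps the paper labels ``easy to check'' (pulling the antilinear adjoint through the Bochner integral and the rank-one bookkeeping), which is a sound filling-in of the same argument.
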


\subsubsection{Underspread operators}
	When operators between function spaces are used to model communication channels, the resulting operators will typically be (at least approximately) \textit{underspread} \cite{Strohmer:2006}. An underspread operator $T\in \HS$ is of the form 
\begin{equation} \label{eq:spreading}
  T=\int_{\Rdd} F(x,\omega) e^{-i\pi x\cdot \omega} \pi(x,\omega) \ dxd\omega,
\end{equation}
where the support of $F$ is contained in $[-\tau,\tau]^d\times [-\nu,\nu]^d$ for $4\tau\nu <1.$ The function $F(x,\omega)e^{-i\pi x\cdot \omega} $ is called the spreading function of $T$, and one can show that any $T\in \HS$ has a spreading function in $L^2(\Rdd)$, as long as the integral in \eqref{eq:spreading} is interpreted appropriately\cite{Feichtinger:1998}. In quantum harmonic analysis the spreading function is considered a  Fourier transform of the operator\cite{Werner:1984}. The next lemma shows that underspread \textit{trace class} operators belong to $\beauty$, i.e. have integral kernel in $M^1(\Rdd)\subset \mathcal{N}(L^2;M^1)$. This is an operator-version of the well-known fact that band-limited integrable functions belong to $M^1(\Rd)$ \cite[Cor. 3.2.7]{Feichtinger:1998a}. The proof is moved to an appendix, as it requires the introduction of several results from quantum harmonic analysis that will not be needed later in the paper. 
\begin{prop} \label{prop:underspread}
	If the spreading function of $T\in \tco$ has compact support, then $T\in \beauty\subset \mathcal{N}(L^2;M^1)$. 
\end{prop}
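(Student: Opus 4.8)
The plan is to imitate the classical proof that a band-limited function in $L^1(\Rdd)$ lies in $M^1(\Rdd)$, transported to operators through the dictionary of quantum harmonic analysis in which trace class operators play the role of $L^1$, the class $\beauty$ plays the role of $M^1$, and the spreading function plays the role of the Fourier transform. Writing $\F_\sigma$ for the symplectic Fourier transform on $\Rdd$ and denoting by $\F_W(S)$ the spreading function of $S\in\tco$ (so that \eqref{eq:spreading} reads $S=\int_{\Rdd}\F_W(S)(z)\pi(z)\,dz$ up to the phase convention), I would first recall Werner's convolution theorem $\F_W(f\star S)=\F_\sigma(f)\cdot\F_W(S)$ for $f\in L^1(\Rdd)$ and $S\in\tco$, together with the fact that $S\mapsto\F_W(S)$ is injective on $\HS$. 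The classical identity $h=h\ast g$ with $\hat g\equiv 1$ on $\operatorname{supp}\hat h$ then has the operator analogue $T=f\star T$, provided $\F_\sigma(f)$ equals $1$ on the compact support of $\F_W(T)$.

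To produce such an $f$, I would choose $\F_\sigma(f)$ to be a smooth, compactly supported bump that is identically $1$ on the box $[-\tau,\tau]^d\times[-\nu,\nu]^d$ containing $\operatorname{supp}\F_W(T)$; since the symplectic Fourier transform preserves $\mathscr S(\Rdd)$ and $\mathscr S(\Rdd)\hookrightarrow M^1(\Rdd)$, this yields $f\in M^1(\Rdd)$. The convolution theorem then gives $\F_W(f\star T)=\F_\sigma(f)\,\F_W(T)=\F_W(T)$, because $\F_\sigma(f)\equiv 1$ where $\F_W(T)$ is supported, and injectivity of the spreading map forces $f\star T=T$. Here $f\star T$ is well defined as the Bochner integral \eqref{eq:opconv} in $\tco$, since $f\in M^1(\Rdd)\subset L^1(\Rdd)$ and $\|\pi(z)T\pi(z)^*\|_{\tco}=\|T\|_{\tco}$.

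The crux is then a smoothing property of the operator convolution: for $f\in M^1(\Rdd)$ and $T\in\tco$ one has $f\star T\in\beauty$, with $\|f\star T\|_{\beauty}\lesssim\|f\|_{M^1}\|T\|_{\tco}$. This is the quantum harmonic analysis counterpart of the Feichtinger algebra being a convolution module over $L^1$, and I expect it to be the main obstacle: the $\tco$-valued integral \eqref{eq:opconv} does not converge in $\beauty$ term by term, since $T$ itself need not lie in $\beauty$ -- which is exactly what we are trying to prove -- so the gain of regularity must come from the $M^1$-regularity of $f$ rather than from absolute convergence. I would establish it by reducing to rank-one operators through the singular value decomposition $T=\sum_n\lambda_n\,(u_n\otimes v_n)$ with $\sum_n\lambda_n=\|T\|_{\tco}$ and $\|u_n\|_{L^2}=\|v_n\|_{L^2}=1$, so that $f\star T=\sum_n\lambda_n\,\mathcal A_f^{v_n,u_n}$, and proving the uniform bound $\|\mathcal A_f^{\varphi_1,\varphi_2}\|_{\beauty}\lesssim\|f\|_{M^1}\|\varphi_1\|_{L^2}\|\varphi_2\|_{L^2}$ for localization operators with $M^1$-symbol and $L^2$-windows. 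Granting the module property, $T=f\star T\in\beauty$, and the inclusion $\beauty\subset\nuc$ from Proposition \ref{prop:projectivetensor} completes the proof.
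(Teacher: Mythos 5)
Your proposal is correct, and its skeleton is the same as the paper's: reproduce $T$ as $T=f\star T$ for some $f\in M^1(\Rdd)$ with $\F_\sigma(f)\equiv 1$ on the support of the spreading function (using Werner's convolution theorem and injectivity of $\F_W$), then invoke a module property of $\star$ to upgrade $T\in \tco$ to $T\in\beauty$. You diverge in the two sub-steps, and both divergences are sound. For the convolver, you take $\F_\sigma(f)$ to be a smooth compactly supported bump, so that $f\in\mathscr{S}(\Rdd)\subset M^1(\Rdd)$ since $\F_\sigma$ preserves $\mathscr{S}(\Rdd)$; the paper instead fixes a reference operator $R=\varphi_0\otimes\varphi_0\in\beauty$ with nowhere-vanishing $\F_W(R)$ and applies the Wiener--L\'evy theorem to get $h\in L^1(\Rdd)$ with $\F_\sigma(h)=1/\F_W(R)$ on the support, setting $b=h\ast\weyl_R\in L^1\ast M^1\subset M^1$. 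Your construction is simpler on $\Rdd$; the paper's mirrors the classical LCA-group argument and needs no smooth structure. For the crux, the paper simply cites \cite[Thm.~8.1]{Luef:2018c}, which states precisely that the Weyl symbol $b\ast\weyl_T$ of $b\star T$ lies in $M^1(\Rdd)$ for $b\in M^1(\Rdd)$, $T\in\tco$; you propose to re-prove this via the singular value decomposition and the rank-one estimate $\|\mathcal{A}_f^{\varphi_1,\varphi_2}\|_{\beauty}\lesssim\|f\|_{M^1}\|\varphi_1\|_{L^2}\|\varphi_2\|_{L^2}$. The reduction is valid (with the uniform rank-one bound, the series $\sum_n\lambda_n\mathcal{A}_f^{v_n,u_n}$ converges absolutely in $\beauty$), and the rank-one bound is true, but be aware it is the one step you assert rather than prove, and it carries all the analytic content: it amounts to $\|f\ast W(\varphi_2,\varphi_1)\|_{M^1}\lesssim\|f\|_{M^1}\|\varphi_1\|_{L^2}\|\varphi_2\|_{L^2}$, which follows from the identity expressing $V_{W(\phi)}W(\varphi_2,\varphi_1)$ as a product of shifted STFTs --- giving $W(\varphi_2,\varphi_1)\in M^{1,\infty}(\Rdd)$ with norm $\lesssim\|\varphi_1\|_{L^2}\|\varphi_2\|_{L^2}$ --- combined with the convolution relation $M^1\ast M^{1,\infty}\subset M^1$; this is essentially the proof of the theorem the paper cites. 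What your route buys is self-containedness and an explicit bound $\|T\|_{\beauty}\lesssim\|f\|_{M^1}\|T\|_{\tco}$; what the paper's buys is brevity and a formulation that does not depend on the existence of Schwartz functions.
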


\section{Time-frequency analysis with operators as windows} \label{sec:tfa}

A fundamental object in time-frequency analysis is the short-time Fourier transform (STFT) $V_\phi\psi$ with window $\phi$. The goal of this section is the define an STFT where the window $\phi$ is replaced by an operator $S$, and to show that the basic properties of the STFT remain true for this generalized STFT. 

As a first step, we will need the Hilbert space $L^2(\Rdd;L^2)$ of equivalence classes of strongly Lebesgue measurable $\Psi:\Rdd \to L^2(\Rd)$ such that
\begin{equation*}
  \|\Psi\|_{L^2(\Rdd;L^2)} := \left(\int_{\Rdd} \|\Psi(z)\|_{L^2}^2 \ dz \right)^{1/2}<\infty,
\end{equation*}
with inner product 
 \begin{equation*}
  \inner{\Psi}{\Phi}_{L^2(\Rdd;L^2)}=\int_{\Rdd} \inner{\Psi(z)}{\Phi(z)}_{L^2} \ dz.
\end{equation*}
The equivalence relation on $L^2(\Rdd;L^2)$ is that $\Psi \sim \Phi$ if $\Psi(z)=\Phi(z)$ as elements of $L^2(\Rd)$ for a.e. $z\in \Rdd.$

We then define a version of the short-time Fourier transform with operators as windows. For $S\in \HS$ and $\psi\in L^2(\Rd)$ we let 
\begin{equation*}
  \opstft_S(\psi)(z)=S\pi(z)^*\psi \quad \text{ for } z\in \Rdd.
\end{equation*}

We obtain a generalization of Moyal's identity. It shows that $\opstft_S$ is a linear isometry from $L^2(\Rd)$ to $L^2(\Rdd;L^2)$.

\begin{lem} \label{lem:genmoyal}
	Let $S\in \HS$ and $\psi\in L^2(\Rd)$. Then $\opstft_S(\psi)\in L^2(\Rdd;L^2)$ and
\begin{equation*} 
   \|\opstft_S(\psi)\|^2_{L^2(\Rdd,L^2)}=\int_{\Rdd} \|S\pi(z)^*\psi\|_{L^2}^2 \ dz = \|S\|_{\HS}^2 \|\psi\|_{L^2}^2.
\end{equation*}
\end{lem}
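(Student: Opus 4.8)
The plan is to reduce the statement to the ordinary Moyal identity by first handling rank-one operators, then bootstrapping to finite-rank operators via sesquilinearity, and finally to all of $\HS$ by density. First I would treat a rank-one operator $S=\eta\otimes\phi$ with $\eta,\phi\in L^2(\Rd)$. Using the definition of the rank-one operator together with $\inner{\pi(z)^*\psi}{\phi}_{L^2}=\inner{\psi}{\pi(z)\phi}_{L^2}=V_\phi\psi(z)$, one obtains $\opstft_S(\psi)(z)=S\pi(z)^*\psi=V_\phi\psi(z)\,\eta$, so $\|\opstft_S(\psi)(z)\|_{L^2}^2=|V_\phi\psi(z)|^2\|\eta\|_{L^2}^2$. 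Integrating over $z$ and applying Moyal's identity $\|V_\phi\psi\|_{L^2(\Rdd)}^2=\|\phi\|_{L^2}^2\|\psi\|_{L^2}^2$ gives $\|\opstft_S(\psi)\|_{L^2(\Rdd;L^2)}^2=\|\eta\|_{L^2}^2\|\phi\|_{L^2}^2\|\psi\|_{L^2}^2$, which equals $\|S\|_{\HS}^2\|\psi\|_{L^2}^2$ since the kernel of $\eta\otimes\phi$ is $\eta(s)\overline{\phi(t)}$, whence $\|\eta\otimes\phi\|_{\HS}=\|\eta\|_{L^2}\|\phi\|_{L^2}$.

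Next I would pass to finite-rank operators, and it is cleanest to prove the polarized identity $\int_{\Rdd}\inner{S_1\pi(z)^*\psi}{S_2\pi(z)^*\psi}_{L^2}\,dz=\inner{S_1}{S_2}_{\HS}\|\psi\|_{L^2}^2$ for rank-one $S_1=\eta_1\otimes\phi_1$ and $S_2=\eta_2\otimes\phi_2$. The integrand equals $V_{\phi_1}\psi(z)\overline{V_{\phi_2}\psi(z)}\,\inner{\eta_1}{\eta_2}_{L^2}$, and the full Moyal identity gives $\int_{\Rdd}V_{\phi_1}\psi(z)\overline{V_{\phi_2}\psi(z)}\,dz=\|\psi\|_{L^2}^2\,\overline{\inner{\phi_1}{\phi_2}}_{L^2}$; combined with $\inner{\eta_1\otimes\phi_1}{\eta_2\otimes\phi_2}_{\HS}=\inner{\eta_1}{\eta_2}_{L^2}\,\overline{\inner{\phi_1}{\phi_2}}_{L^2}$, this is exactly the polarized identity. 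Since both sides are sesquilinear in $(S_1,S_2)$ and every finite-rank operator is a finite linear combination of rank-one operators, the polarized identity — and in particular its diagonal $S_1=S_2=S$, which is the claim of the lemma — extends to all finite-rank $S$; interchanging the finite sum with the integral is harmless.

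Finally I would remove the finite-rank restriction by density. For arbitrary $S\in\HS$, choose finite-rank $S_N\to S$ in $\HS$. The diagonal identity for finite-rank operators gives $\|\opstft_{S_N}(\psi)-\opstft_{S_M}(\psi)\|_{L^2(\Rdd;L^2)}=\|S_N-S_M\|_{\HS}\|\psi\|_{L^2}$, so $\opstft_{S_N}(\psi)$ is Cauchy in $L^2(\Rdd;L^2)$ and converges to some $\Psi$. On the other hand $\|\cdot\|_{\mathcal{L}(L^2)}\le\|\cdot\|_{\HS}$ yields $\|\opstft_{S_N}(\psi)(z)-\opstft_S(\psi)(z)\|_{L^2}\le\|S_N-S\|_{\HS}\|\psi\|_{L^2}$ for every $z$, so $\opstft_{S_N}(\psi)\to\opstft_S(\psi)$ pointwise; passing to a subsequence converging a.e. identifies $\Psi=\opstft_S(\psi)$. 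Hence $\opstft_S(\psi)\in L^2(\Rdd;L^2)$ and, by continuity of the norm, $\|\opstft_S(\psi)\|_{L^2(\Rdd;L^2)}=\lim_N\|S_N\|_{\HS}\|\psi\|_{L^2}=\|S\|_{\HS}\|\psi\|_{L^2}$.

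The main obstacle is the final limiting step, namely reconciling the abstract $L^2(\Rdd;L^2)$-limit of $\opstft_{S_N}(\psi)$ with the pointwise-defined map $z\mapsto S\pi(z)^*\psi$. Once one observes that Hilbert–Schmidt convergence dominates both operator-norm (hence pointwise in $z$) convergence and $L^2(\Rdd;L^2)$ convergence, the identification through an a.e.-convergent subsequence is routine; everything preceding it is a direct consequence of the ordinary Moyal identity.
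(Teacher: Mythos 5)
Your proof is correct, but it takes a genuinely different route from the paper's. The paper proves the lemma in essentially two lines: it rewrites $\|S\pi(z)^*\psi\|_{L^2}^2=\inner{\pi(z)S^*S\pi(z)^*\psi}{\psi}_{L^2}=\tr\bigl(\pi(z)S^*S\pi(z)^*(\psi\otimes\psi)\bigr)$ and then invokes the quantum harmonic analysis identity $\int_{\Rdd}\tr(\pi(z)R\pi(z)^*T)\,dz=\tr(R)\tr(T)$ for trace class operators $R,T$ (citing Luef--Skrettingland), applied with $R=S^*S$ and $T=\psi\otimes\psi$, noting $\tr(S^*S)=\|S\|_{\HS}^2$ and $\tr(\psi\otimes\psi)=\|\psi\|_{L^2}^2$. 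You instead build the identity from scratch: classical Moyal for rank-one windows, polarization and sesquilinearity for finite-rank operators, and a density argument in $\HS$, using that $\|\cdot\|_{\bo}\leq\|\cdot\|_{\HS}$ controls pointwise (indeed uniform) convergence of $z\mapsto S_N\pi(z)^*\psi$ while the isometry on finite-rank operators controls convergence in $L^2(\Rdd;L^2)$; the identification of the two limits via an a.e.\ convergent subsequence is sound. What each approach buys: the paper's proof is shorter and fits its quantum-harmonic-analysis framing, but leans on an external lemma whose standard proof is itself a rank-one reduction of the kind you carry out; your argument is self-contained, needing only the classical Moyal identity and density of finite-rank operators in $\HS$, and in effect it reproves the cited trace identity in the special case where one factor has rank one. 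A side benefit worth noting: your limiting step also quietly settles the strong measurability of $z\mapsto S\pi(z)^*\psi$ (as a uniform limit of continuous finite-rank approximants $\sum_n V_{\phi_n}\psi(z)\,\xi_n$), a point the paper handles in a separate remark via the Pettis measurability theorem.
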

\begin{proof}
	We may rewrite $$\|S\pi(z)^*\psi\|_{L^2}^2=\inner{\pi(z)S^*S\pi(z)^*\psi}{\psi}_{L^2}=\tr(\pi(z)S^*S\pi(z)^*(\psi\otimes \psi)).$$ The result therefore follows by \cite[Lem. 4.1]{Luef:2018c}, which states that $$\int_{\Rdd} \tr(\pi(z)R\pi(z)^*T) \ dz=\tr(R)\tr(T)$$ for trace class operators $R,T$ on $L^2(\Rd)$; pick $R=S^*S$ and $T=\psi \otimes \psi$ and note that $\tr(S^*S)=\|S\|_{\HS}^2$ and $\tr(\psi\otimes \psi)=\|\psi\|_{L^2}^2$
\end{proof}

\begin{exmp}
	To see that $\opstft_S$ actually generalizes the usual STFT, consider $\phi\in L^2(\Rd)$ and let $\xi\in L^2(\Rd)$ be any function satisfying $\|\xi\|_{L^2}=1$. Then let $S=\xi \otimes \phi.$ For any $\psi \in L^2(\Rd)$ we then have $$\opstft_S(\psi)(z)=S\pi(z)^*\psi=\inner{\pi(z)^*\psi}{\phi}_{L^2}\xi=V_{\phi}\psi(z) \cdot \xi,$$ which contains precisely the same information as $V_\phi\psi(z)$ given that we know $\xi.$ In particular, it is easy to show that $\|S\|_{\HS}^2=\|\phi\|_{L^2}^2$ and $\|\opstft_S(\psi)\|_{L^2(\Rdd;L^2)}^2=\|V_\phi \psi\|_{L^2(\Rdd)}$, so Lemma \ref{lem:genmoyal} reduces to Moyal's identity in this case.
\end{exmp}

\begin{rem}
	Strong measurability of $\opstft_S(\psi)$ is always satisfied: since $L^2(\Rd)$ is separable, the Pettis measurability theorem \cite[Thm. 1.1.20]{Hytonen:2016} ensures that strong measurability follows from weak measurability. Weak measurability means that for each $\phi\in L^2(\Rd)$, the map $$z\mapsto  \inner{\opstft_S(\psi)(z)}{\phi}_{L^2}$$ is Lebesgue measurable. We may rewrite $\inner{\opstft_S(\psi)(z)}{\phi}_{L^2}=\inner{S\pi(z)^*\psi}{\phi}_{L^2}=V_{S^*\phi}\psi(z)$. It is well-known that the STFT $z\mapsto V_\xi\psi(z)$ is continuous for any $\xi \in L^2(\Rd)$, in particular for $\xi=S^*\phi$, hence the map is Lebesgue measurable.
\end{rem}
 We then define for $\Psi \in L^2(\Rdd;L^2)$ a function $\opstft_S^*(\Psi)$ on $\Rd$ by the $L^2(\Rd)$-valued  integral
\begin{equation} \label{eq:synthesisintegral}
  \opstft_S^*(\Psi)=\int_{\Rdd} \pi(z)S^* \Psi(z) \ dz.
\end{equation}
The integral \eqref{eq:synthesisintegral} is interpreted in a weak sense: we will see that
\begin{equation}\label{eq:weaksensecondition}
\left|\int_{\Rdd} \inner{\pi(z)S^* \Psi(z)}{\phi}_{L^2} \ dz \right|\lesssim \|\phi\|_{L^2} \quad \text{ for any } \phi\in L^2(\Rd),
\end{equation}
so it follows from the Riesz representation theorem for Hilbert spaces that there must exist an element in $L^2(\Rd)$, which we denote by $\int_{\Rdd} \pi(z)S^* \Psi(z) \ dz,$ such that for any $\phi \in L^2(\Rd)$
\begin{equation} \label{eq:adjointweakdef}
  \inner{\int_{\Rdd} \pi(z)S^* \Psi(z) \ dz}{\phi}_{L^2}=\int_{\Rdd} \inner{\pi(z)S^* \Psi(z)}{\phi}_{L^2} \ dz.
\end{equation}
The next lemma shows that the integral in \eqref{eq:synthesisintegral} is well-defined in this sense.

\begin{lem} \label{lem:continuous}
	Let $S \in \HS$. 
	\begin{enumerate}[(a)]
		\item  Equation \eqref{eq:adjointweakdef} defines $\opstft_S^*(\psi)=\int_{\Rdd} \pi(z)S^* \Psi(z) \ dz$ as an element of $L^2(\Rd)$, and $\opstft_S^*:L^2(\Rdd;L^2)\to L^2(\Rd)$ defines a bounded operator that is the Hilbert space adjoint of $\opstft_S$.
		\item The composition $\opstft_S^*\opstft_S$ is $\|S\|_{\HS}^2$ times the identity operator on $L^2(\Rd)$.
	\end{enumerate}
\end{lem}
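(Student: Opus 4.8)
The plan is to prove both parts by reducing everything to the generalized Moyal identity (Lemma \ref{lem:genmoyal}) via a single algebraic observation: the integrand defining $\opstft_S^*$ is secretly an $L^2(\Rdd;L^2)$-inner product against $\opstft_S(\phi)$.

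For part (a), I would first rewrite the integrand in \eqref{eq:weaksensecondition}. Using that $\pi(z)$ is unitary, so that $(\pi(z)S^*)^*=S\pi(z)^*$, I obtain
\[
\inner{\pi(z)S^*\Psi(z)}{\phi}_{L^2}=\inner{\Psi(z)}{S\pi(z)^*\phi}_{L^2}=\inner{\Psi(z)}{\opstft_S(\phi)(z)}_{L^2}.
\]
Integrating over $\Rdd$ turns the left-hand side of \eqref{eq:weaksensecondition} into $\inner{\Psi}{\opstft_S(\phi)}_{L^2(\Rdd;L^2)}$, and Cauchy--Schwarz together with Lemma \ref{lem:genmoyal} bounds this by $\|\Psi\|_{L^2(\Rdd;L^2)}\|\opstft_S(\phi)\|_{L^2(\Rdd;L^2)}=\|S\|_{\HS}\|\Psi\|_{L^2(\Rdd;L^2)}\|\phi\|_{L^2}$. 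This is exactly \eqref{eq:weaksensecondition}, so the Riesz representation theorem produces $\opstft_S^*(\Psi)\in L^2(\Rd)$ satisfying \eqref{eq:adjointweakdef}, with $\|\opstft_S^*(\Psi)\|_{L^2}\leq\|S\|_{\HS}\|\Psi\|_{L^2(\Rdd;L^2)}$; boundedness follows. The defining identity \eqref{eq:adjointweakdef} then reads $\inner{\opstft_S^*(\Psi)}{\phi}_{L^2}=\inner{\Psi}{\opstft_S(\phi)}_{L^2(\Rdd;L^2)}$ for all $\phi$, which is precisely the statement that $\opstft_S^*$ is the Hilbert space adjoint of $\opstft_S$.

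For part (b), I would feed the adjoint relation from (a) back into itself: for $\psi,\phi\in L^2(\Rd)$,
\[
\inner{\opstft_S^*\opstft_S(\psi)}{\phi}_{L^2}=\inner{\opstft_S(\psi)}{\opstft_S(\phi)}_{L^2(\Rdd;L^2)}.
\]
Lemma \ref{lem:genmoyal} gives $\|\opstft_S(\psi)\|_{L^2(\Rdd;L^2)}^2=\|S\|_{\HS}^2\|\psi\|_{L^2}^2$ for every $\psi$; since this quadratic form comes from the sesquilinear inner product on $L^2(\Rdd;L^2)$, the complex polarization identity upgrades it to $\inner{\opstft_S(\psi)}{\opstft_S(\phi)}_{L^2(\Rdd;L^2)}=\|S\|_{\HS}^2\inner{\psi}{\phi}_{L^2}$. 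Hence $\inner{\opstft_S^*\opstft_S(\psi)}{\phi}_{L^2}=\|S\|_{\HS}^2\inner{\psi}{\phi}_{L^2}$ holds for all $\phi$, and therefore $\opstft_S^*\opstft_S=\|S\|_{\HS}^2\,\mathrm{Id}$ on $L^2(\Rd)$.

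There is no serious obstacle here; the whole argument is driven by the one identification above. The only points requiring a little care are the measurability and integrability needed to treat the synthesis integral weakly and to recognize $\int_{\Rdd}\inner{\Psi(z)}{\opstft_S(\phi)(z)}_{L^2}\,dz$ as a genuine inner product in $L^2(\Rdd;L^2)$ -- both are guaranteed because $\Psi,\opstft_S(\phi)\in L^2(\Rdd;L^2)$ (the latter by Lemma \ref{lem:genmoyal} and the preceding measurability remark). I expect the subtlest bookkeeping step to be confirming that polarization is legitimate, i.e.\ that equality of the quadratic forms forces equality of the full sesquilinear forms over $\C$; this is standard but worth stating explicitly.
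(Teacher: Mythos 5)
Your proof is correct, and part (a) coincides with the paper's own argument: the same identification $\inner{\pi(z)S^*\Psi(z)}{\phi}_{L^2}=\inner{\Psi(z)}{\opstft_S(\phi)(z)}_{L^2}$, followed by Cauchy--Schwarz, Lemma \ref{lem:genmoyal}, and Riesz representation. Part (b) is where you genuinely diverge: the paper does not prove the inversion formula directly, but rewrites it as $\int_{\Rdd}\pi(z)S^*S\pi(z)^*\psi \, dz=\|S\|_{\HS}^2\,\psi$ and cites part (c) of \cite[Prop. 3.3]{Werner:1984} from quantum harmonic analysis, whereas you derive it self-containedly by feeding the adjoint relation from (a) into the polarized form of the isometry in Lemma \ref{lem:genmoyal}. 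Your polarization step is legitimate exactly as you flag: $\opstft_S$ is $\C$-linear and $\inner{u}{v}=\frac{1}{4}\sum_{k=0}^{3}i^k\|u+i^k v\|^2$ holds in any complex Hilbert space, so the quadratic identity $\|\opstft_S\psi\|^2=\|S\|_{\HS}^2\|\psi\|^2$ does upgrade to $\inner{\opstft_S\psi}{\opstft_S\phi}=\|S\|_{\HS}^2\inner{\psi}{\phi}$. What your route buys is a more elementary argument with a tidier dependency structure: the only external input is Lemma \ref{lem:genmoyal} itself (which, in the paper, is where the Werner-type identity $\int_{\Rdd}\tr(\pi(z)R\pi(z)^*T)\,dz=\tr(R)\tr(T)$ from \cite[Lem. 4.1]{Luef:2018c} is consumed), so the same underlying fact is reused once rather than cited twice. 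What the paper's citation buys is brevity and consistency with its quantum-harmonic-analysis framing, at the cost of importing an operator-valued integral identity whose weak interpretation must match the one in \eqref{eq:adjointweakdef} --- a matching your argument gets for free.
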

\begin{proof}
  Let $\Psi\in L^2(\Rdd;L^2)$ and let $\phi\in L^2(\Rd)$. We need to show \eqref{eq:weaksensecondition}, as mentioned \eqref{eq:adjointweakdef} then defines an element $\int_{\Rdd} \pi(z)S^* \Psi(z) \ dz$ of $L^2(\Rd)$ by Riesz' representation theorem. We find that
 \begin{align*}
 \left| \int_{\Rdd} \inner{\pi(z)S^*\Psi(z)}{\phi}_{L^2} \ dz \right| &= \left|\int_{\Rdd} \inner{\Psi(z)}{S\pi(z)^*\phi}_{L^2} \ dz \right| \\
 &= |\inner{\Psi}{\opstft_S(\phi)}_{L^2(\Rd;L^2)}| \\
 &\leq \|\Psi\|_{L^2(\Rdd;L^2)} \|\opstft_S(\phi)\|_{L^2(\Rdd;L^2)}\\
 &=\|\Psi\|_{L^2(\Rdd;L^2)} \|S\|_{\HS} \|\phi\|_{L^2} 
\end{align*}
by Lemma \ref{lem:genmoyal}. It is clear that $\opstft_S^*$ is linear, and the estimate also shows that it is bounded from $L^2(\Rd;L^2(\Rd))$ to $L^2(\Rd)$. A simple calculation shows that it is the adjoint of $\opstft_S.$  The second part states that $$\int_{\Rdd} \pi(z)S^*S\pi(z)^*\psi \ dz=\|S\|_{\HS}^2 \psi \quad \text{ for any } \psi \in L^2(\Rd),$$ which is part (c) of \cite[Prop. 3.3]{Werner:1984}.
\end{proof}

\section{Equivalent norms for modulation spaces} \label{sec:mainresult}

The generalized Moyal identity in Lemma \ref{lem:genmoyal} shows that the norm of $\opstft_S(\psi)$ in $L^2(\Rdd;L^2)$ is equivalent to the norm of $\psi$ in $L^2(\Rd)$. We will now generalize Theorem \ref{thm:windowindependent} by showing that if $S$ satisfies some extra assumptions, the same is true if $L^2(\Rd)$ is replaced by $M^{p,q}_m(\Rd)$ and $L^2(\Rdd;L^2)$ is replaced by $L^{p,q}_m(\Rdd;L^2)$, where $1\leq p,q \leq \infty$ and $m$ is some $v$-moderate weight. As before, $v$ always denotes a submultiplicative weight function on $\Rdd$. 

We start by defining $L^{p,q}_m(\Rdd;L^2)$. For $1\leq p,q \leq \infty$ and any $v$-moderate weight $m$, the Banach space $L^{p,q}_m(\Rdd;L^2)$ consists of the equivalence classes of strongly Lebesgue measurable functions  $\Psi : \Rdd \to L^2(\Rd)$ such that 
\begin{equation*}
  \|\Psi\|_{L^{p,q}_m(\Rdd;L^2)} := \left( \int_{\Rd} \left(\int_{\Rd} \|\Psi(x,\omega)\|_{L^2}^p m(x,\omega)^p \ dx \right)^{q/p} d\omega\right)^{\frac{1}{q}}<\infty,
\end{equation*}
where $\Phi\sim \Psi$ if $\Psi(z)=\Phi(z)$ for a.e. $z\in \Rdd$. When $p=\infty$ or $q=\infty$ the definition is modified in the usual way by replacing integrals by essential supremums. 

With this definition in place, we are ready to state our main result. 

\begin{thm} \label{thm:continuous}
	Let $0\neq S\in \HS$ such that $S^*\in \nuc$. For any $1\leq p,q \leq \infty$ and $v$-moderate weight $m$, we have
	\begin{equation*}
    C_{lower}\cdot\|\psi\|_{M^{p,q}_m}\leq \|\opstft_S(\psi)\|_{L^{p,q}_m(\Rdd;L^2)} \leq C_{upper}\cdot  \|\psi\|_{M^{p,q}_m}
\end{equation*}
with 
\begin{align*}
C_{lower}&=\|S\|_{\HS}^2\cdot \left(C_v^m\cdot  \|S^*\|_{\mathcal{N}}\cdot  \|\varphi_0\|_{M^1_v} \right)^{-1}, \\
C_{upper}&=C_v^m \cdot \|S^*\|_{\mathcal{N}}.
\end{align*}
\end{thm}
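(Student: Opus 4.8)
The plan is to mirror Gröchenig's proof of the window-independence of modulation spaces (Theorem \ref{thm:windowindependent}), but with the scalar STFT replaced by the vector-valued transform $\opstft_S$. The two inequalities are handled separately: the upper bound is essentially bookkeeping with the nuclear expansion, while the lower bound is the substantive part and rests on the reproducing formula of Lemma \ref{lem:continuous}(b).

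For the upper bound I would fix an expansion $S=\sum_n \xi_n\otimes\phi_n$ with $\sum_n\|\xi_n\|_{L^2}\|\phi_n\|_{M^1_v}<\infty$, as guaranteed by \eqref{eq:nuclearexpansion}. Using the extended expansion \eqref{eq:extendedexpansion} one computes pointwise that $\opstft_S(\psi)(z)=S\pi(z)^*\psi=\sum_n V_{\phi_n}\psi(z)\,\xi_n$, whence the triangle inequality gives $\|\opstft_S(\psi)(z)\|_{L^2}\leq\sum_n\|\xi_n\|_{L^2}\,|V_{\phi_n}\psi(z)|$. Taking the $L^{p,q}_m$-norm, using its triangle inequality to pass the sum outside, and applying Proposition \ref{prop:rankonecase} to each term as $\|V_{\phi_n}\psi\|_{L^{p,q}_m}\leq C_v^m\|\phi_n\|_{M^1_v}\|\psi\|_{M^{p,q}_m}$, yields $\|\opstft_S(\psi)\|_{L^{p,q}_m(\Rdd;L^2)}\leq C_v^m\big(\sum_n\|\xi_n\|_{L^2}\|\phi_n\|_{M^1_v}\big)\|\psi\|_{M^{p,q}_m}$; taking the infimum over all expansions gives $C_{upper}=C_v^m\|S^*\|_{\mathcal{N}}$.

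For the lower bound the idea is to recover $\|\psi\|_{M^{p,q}_m}=\|V_{\varphi_0}\psi\|_{L^{p,q}_m}$ from $\opstft_S(\psi)$ by a pointwise convolution estimate. Polarizing Lemma \ref{lem:genmoyal} (equivalently, pairing the reproducing formula of Lemma \ref{lem:continuous}(b) against a test function through \eqref{eq:adjointweakdef}) gives the Moyal-type identity $\inner{\psi}{\eta}_{L^2}=\|S\|_{\HS}^{-2}\int_{\Rdd}\inner{\opstft_S(\psi)(z)}{\opstft_S(\eta)(z)}_{L^2}\,dz$. Choosing $\eta=\pi(w)\varphi_0$ and simplifying $\opstft_S(\pi(w)\varphi_0)(z)$ to a unimodular phase times $\opstft_S(\varphi_0)(z-w)$ via \eqref{eq:tfadjoint} and the composition law, Cauchy--Schwarz in $L^2(\Rd)$ produces
\[
|V_{\varphi_0}\psi(w)|\leq\frac{1}{\|S\|_{\HS}^2}\left(H\ast G\right)(w),\qquad H(z)=\|\opstft_S(\psi)(z)\|_{L^2},\quad G(u)=\|\opstft_S(\varphi_0)(-u)\|_{L^2}.
\]
The weighted mixed-norm Young inequality $\|H\ast G\|_{L^{p,q}_m}\leq C_v^m\|H\|_{L^{p,q}_m}\|G\|_{L^1_v}$ (which follows from $m(w)\leq C_v^m v(w-z)m(z)$ and the unweighted mixed-norm Young inequality) then bounds $\|V_{\varphi_0}\psi\|_{L^{p,q}_m}$ by $\|S\|_{\HS}^{-2}C_v^m\|\opstft_S(\psi)\|_{L^{p,q}_m(\Rdd;L^2)}\|G\|_{L^1_v}$. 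Finally I would bootstrap: since $v(-u)=v(u)$ we have $\|G\|_{L^1_v}=\|\opstft_S(\varphi_0)\|_{L^1_v(\Rdd;L^2)}$, and applying the already-proven upper bound with $p=q=1$ and $m=v$ (for which $C_v^v=1$) gives $\|\opstft_S(\varphi_0)\|_{L^1_v}\leq\|S^*\|_{\mathcal{N}}\|\varphi_0\|_{M^1_v}$. Rearranging the resulting inequality produces exactly $C_{lower}$.

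The main obstacle I anticipate is making the lower-bound argument valid for a general $\psi\in M^{p,q}_m(\Rd)$ rather than merely $\psi\in L^2(\Rd)$, since the Moyal-type identity is established in Lemma \ref{lem:genmoyal} only for $L^2$-functions. Extending it to $\psi\in M^\infty_{1/v}(\Rd)$ paired against $\eta=\pi(w)\varphi_0\in M^1_v(\Rd)$ requires care. I would handle this by observing that $\opstft_S(\eta)\in L^1_v(\Rdd;L^2)$ and $\opstft_S(\psi)\in L^\infty_{1/v}(\Rdd;L^2)$ (both from the upper bound at the exponents $(1,1,v)$ and $(\infty,\infty,1/v)$ respectively), so the integral pairing is well-defined; the identity itself then follows either by density of $M^1_v(\Rd)$ in $M^{p,q}_m(\Rd)$ for $p,q<\infty$ together with a weak* argument in the endpoint cases, or by verifying it directly through the duality bracket \eqref{eq:adjointweakdef}.
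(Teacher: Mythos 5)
Your proposal is correct and follows essentially the same route as the paper: the upper bound is verbatim the paper's Lemma \ref{lem:analysisbound}, and your polarized Moyal identity with $\eta=\pi(w)\varphi_0$ is just the inlined weak form of the paper's two-step argument, which proves boundedness of $\opstft_S^*$ from $L^{p,q}_m(\Rdd;L^2)$ to $M^{p,q}_m(\Rd)$ via the same intertwining relation, convolution estimate against $\|\opstft_S(\varphi_0)(\cdot)\|_{L^2}$, weighted Young inequality and bootstrap at the exponents $(1,1,v)$, and then applies the inversion formula $\opstft_S^*\opstft_S=\|S\|_{\HS}^2\,\mathrm{Id}$ on $M^\infty_{1/v}(\Rd)$. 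The obstacle you flag --- extending the reproducing identity beyond $L^2(\Rd)$ --- is resolved in the paper exactly along the lines of your first suggestion: a weak* approximation of $\psi\in M^\infty_{1/v}(\Rd)$ by a norm-bounded sequence in $L^2(\Rd)$ (via \cite[Cor. 7]{Dorfler:2011}) combined with dominated convergence, which works uniformly in $p,q$ and avoids separate endpoint cases.
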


Our proof will follow the same structure as the usual proof that $M^{p,q}_m$ is independent of the window function \cite{Grochenig:2001}: we will show that $\opstft_S$ is bounded from $M^{p,q}_m(\Rd)$ to $L^{p,q}_m(\Rdd;L^2)$ and that $\opstft_S^*$ is bounded from $L^{p,q}_m(\Rdd;L^2)$ to $M^{p,q}_m(\Rd)$.

 Before we start, we make sure that there is no ambiguity in interpreting $$\opstft_S(\psi)(z)=S\pi(z)^*\psi$$ even when $\psi\in M^\infty_{1/v}(\Rdd)$. First note that as $\pi(z)$ is bounded on $M^1_v(\Rd)$ by \eqref{eq:tfbound}, we may extend $\pi(z)$ to a bounded operator on $M^\infty_{1/v}(\Rd)$ by defining
\begin{equation} \label{eq:tfdual}
  \inner{\pi(z)\psi}{\phi}_{M^\infty_{1/v},M^1_v}:=\inner{\psi}{\pi(z)^*\phi}_{M^\infty_{1/v},M^1_v} \quad \text{ for } \psi \in M^\infty_{1/v}(\Rd),\phi \in M^1_v(\Rd).
\end{equation}
As $\pi(x,\omega)^*=e^{-2\pi i x \cdot \omega}\pi(-x,\omega)$, $\pi(z)^*$ is also bounded on $M^\infty_{1/v}(\Rd)$. Therefore
 $$\opstft_S(\psi)(z)=S\pi(z)^*\psi$$ makes sense by Lemma \ref{lem:nuclerproperties}, as $S$ extends to a bounded operator from $M^\infty_{1/v}(\Rd)$ to $L^2(\Rd)$ -- hence $S\pi(z)^*\psi$ is a well-defined element of $L^2(\Rd)$.

\begin{lem} \label{lem:analysisbound}
	Let $m$ be a $v$-moderate weight. For any $1\leq p,q\leq \infty$, $\opstft_S$ is a bounded, linear map from $M^{p,q}_m(\Rd)$ to $L^{p,q}_m(\Rdd;L^2)$ with $\|\opstft_S(\psi)\|_{L^{p,q}_m(\Rdd;L^2)}\leq C_{v}^m\cdot \|S^*\|_{\mathcal{N}}\cdot \|\psi\|_{M^{p,q}_m}$.
\end{lem}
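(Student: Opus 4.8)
The plan is to reduce the operator-window estimate to the classical rank-one bound of Proposition \ref{prop:rankonecase} by decomposing $S$ into rank-one pieces. Since $S^*\in \nuc$, equation \eqref{eq:nuclearexpansion} provides an expansion $S=\sum_{n=1}^\infty \xi_n\otimes \phi_n$ with $\sum_{n=1}^\infty \|\xi_n\|_{L^2}\|\phi_n\|_{M^1_v}<\infty$. For $\psi\in M^{p,q}_m(\Rd)$, which lies in $M^\infty_{1/v}(\Rd)$ by definition of the modulation space, I would apply the extended expansion \eqref{eq:extendedexpansion} to $\pi(z)^*\psi\in M^\infty_{1/v}(\Rd)$, obtaining the absolutely convergent sum in $L^2(\Rd)$
$$\opstft_S(\psi)(z)=S\pi(z)^*\psi=\sum_{n=1}^\infty \inner{\pi(z)^*\psi}{\phi_n}_{M^\infty_{1/v},M^1_v}\,\xi_n.$$
Using the definition \eqref{eq:tfdual} of $\pi(z)$ on $M^\infty_{1/v}(\Rd)$ together with $V_{\phi_n}\psi(z)=\inner{\psi}{\pi(z)\phi_n}_{M^\infty_{1/v},M^1_v}$, each coefficient is identified as $\inner{\pi(z)^*\psi}{\phi_n}_{M^\infty_{1/v},M^1_v}=V_{\phi_n}\psi(z)$, so that $\opstft_S(\psi)(z)=\sum_{n=1}^\infty V_{\phi_n}\psi(z)\,\xi_n$.

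From here the estimate is a termwise application of Proposition \ref{prop:rankonecase}. Taking $L^2$-norms pointwise in $z$ and using the triangle inequality gives $\|\opstft_S(\psi)(z)\|_{L^2}\leq \sum_{n=1}^\infty \|\xi_n\|_{L^2}\,|V_{\phi_n}\psi(z)|$. Applying the monotone $L^{p,q}_m$-norm to both sides of this pointwise bound and then the Minkowski inequality to pass the norm inside the sum yields
$$\|\opstft_S(\psi)\|_{L^{p,q}_m(\Rdd;L^2)}\leq \sum_{n=1}^\infty \|\xi_n\|_{L^2}\,\|V_{\phi_n}\psi\|_{L^{p,q}_m}.$$
Proposition \ref{prop:rankonecase} bounds each $\|V_{\phi_n}\psi\|_{L^{p,q}_m}$ by $C_v^m\|\phi_n\|_{M^1_v}\|\psi\|_{M^{p,q}_m}$, producing the bound $C_v^m\|\psi\|_{M^{p,q}_m}\sum_{n=1}^\infty \|\xi_n\|_{L^2}\|\phi_n\|_{M^1_v}$. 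Finally, taking the infimum over all expansions \eqref{eq:nuclearexpansion} of $S$ replaces the series by $\|S^*\|_{\mathcal{N}}$ by the definition \eqref{eq:nuclearnorm} of the nuclear norm, which gives exactly the claimed inequality $\|\opstft_S(\psi)\|_{L^{p,q}_m(\Rdd;L^2)}\leq C_v^m\|S^*\|_{\mathcal{N}}\|\psi\|_{M^{p,q}_m}$; linearity of $\opstft_S$ is immediate.

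The essential technical points, which I expect to be routine rather than genuine obstacles, are two. First, $\opstft_S(\psi)$ must be strongly measurable as an $L^2$-valued function so that it defines an element of $L^{p,q}_m(\Rdd;L^2)$; this follows as in the earlier remark on $\opstft_S$, since each partial sum $\sum_{n=1}^N V_{\phi_n}\psi(z)\xi_n$ depends continuously on $z$ (continuity of each STFT $V_{\phi_n}\psi$) and the series converges in $L^2(\Rd)$ for every $z$. Second, the interchange of the $L^{p,q}_m$-norm with the infinite sum must be justified, which is legitimate because $\sum_n \|\xi_n\|_{L^2}\|V_{\phi_n}\psi\|_{L^{p,q}_m}$ is finite by the absolute convergence of $\sum_n \|\xi_n\|_{L^2}\|\phi_n\|_{M^1_v}$ combined with Proposition \ref{prop:rankonecase}. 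The only real care required is bookkeeping: ensuring that the pointwise-in-$z$ expansion, the passage to the $L^{p,q}_m$-norm, and the norm-sum interchange are all simultaneously valid. Once the expansion \eqref{eq:extendedexpansion} is in hand, the estimate is an immediate consequence of the rank-one case.
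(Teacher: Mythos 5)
Your proof is correct and follows essentially the same route as the paper's: expand $S=\sum_n \xi_n\otimes\phi_n$ via the nuclear structure of $S^*$, identify the coefficients as $V_{\phi_n}\psi(z)$, apply the triangle inequality in $L^{p,q}_m$ and the rank-one bound of Proposition \ref{prop:rankonecase}, and pass to the infimum over expansions to obtain $\|S^*\|_{\mathcal{N}}$. Your explicit treatment of strong measurability and of the norm--sum interchange is a welcome addition of detail that the paper leaves implicit, but it does not change the argument.
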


\begin{proof}
Throughout the proof we will use the expansion in \eqref{eq:extendedexpansion} to write
\begin{equation*} 
  S=\sum_{n=1}^\infty \xi_n \otimes \phi_n
\end{equation*}
 with $\sum_{n=1}^\infty \|\xi_n\|_{L^2} \|\phi_n\|_{M^1_v}<\infty$.
	Assume that $\psi \in M^{p,q}_m(\Rd)$. Then 
	\begin{equation*}
S\pi(z)^*\psi=\sum_{n=1}^\infty \inner{\pi(z)^*\psi}{\phi}_{M^\infty_{1/v},M^1_v} \xi_n=\sum_{n=1}^\infty V_{\phi_n}\psi(z) \xi_n.
\end{equation*}
	This implies that 
	\begin{equation*}
  \|S\pi(z)^*\psi\|_{L^2}\leq \sum_{n=1}^\infty |V_{\phi_n}\psi(z)| \cdot  \|\xi_n\|_{L^2},
\end{equation*}

hence the triangle inequality for $L^{p,q}_m(\Rdd)$ gives
\begin{align*}
  \|\opstft_S(\psi)\|_{L^{p,q}_m(\Rdd;L^2)}&\leq \left\|\sum_{n=1}^\infty |V_{\phi_n}\psi(-)| \cdot  \|\xi_n\|_{L^2} \right\|_{L^{p,q}_m(\Rdd)} \\
&\leq \sum_{n=1}^\infty \|\xi_n\|_{L^2} \left\|V_{\phi_n}\psi\right\|_{L^{p,q}_m(\Rdd)}
\end{align*}
 We then apply Proposition \ref{prop:rankonecase} to get 
\begin{equation*}
  \|\opstft_S(\psi)\|_{L^{p,q}_m(\Rdd;L^2)}\leq C_v^m \|\psi\|_{M^{p,q}_m} \sum_{n=1}^\infty \|\xi_n\|_{L^2} \|\phi_n\|_{M^1_v}.
\end{equation*}
Using the definition of $\|S^*\|_{\mathcal{N}}$ from \eqref{eq:nuclearnorm} we get that
\begin{equation*}
 \|\opstft_S(\psi)\|_{L^{p,q}_m(\Rdd;L^2)} \leq C_v^m  \|S^*\|_{\mathcal{N}}  \|\psi\|_{M^{p,q}_m}.
\end{equation*}
\end{proof}

In order to give a sensible definition of $\opstft_S^*(\Psi)$ for $\Psi \in L^{p,q}_m(\Rdd;L^2)$, we will need H\"older's inequality for the mixed-norm spaces $L^{p,q}_m(\Rdd)$ \cite{Benedek:1961,Grochenig:2001}: given $F\in L^{p,q}_m(\Rdd)$ and $G\in L^{p',q'}_{1/m}(\Rdd)$ for $1\leq p,q\leq \infty$, then $F\cdot G \in L^1(\Rdd)$ with 
\begin{equation} \label{eq:holder}
  \int_{\Rdd} |F(z)G(z)| \ dz \leq \|F\|_{L^{p,q}_m}\|G\|_{L^{p',q'}_{1/m}}.
\end{equation}

For any $\Psi\in L^{p,q}_m(\Rdd;L^2)$ we then define $\opstft_S^*(\Psi)$ as an element of $M^\infty_{1/v}(\Rd)$ by duality:
\begin{equation*}
  \inner{\opstft_S^*(\Psi)}{\phi}_{M^\infty_{1/v},M^1_v}:=\int_{\Rdd} \inner{\Psi(z)}{\opstft_S(\phi)(z)}_{L^2} \ dz \quad \text{ for all } \phi\in M^1_v(\Rd). 
\end{equation*}
To see that this actually defines a bounded linear functional on $M^1_v(\Rd)$, note that 
\begin{align*}
  \int_{\Rdd} |\inner{\Psi(z)}{\opstft_S(\phi)(z)}_{L^2}| \ dz &\leq \int_{\Rdd} \|\Psi(z)\|_{L^2} \|\opstft_S(\phi)(z)\|_{L^2} \ dz \\
&\leq \|\Psi\|_{L^{p,q}_m(\Rdd;L^2)} \|\opstft_S(\phi)\|_{L^{p',q'}_{1/m}(\Rdd;L^2)} \quad \text{ by \eqref{eq:holder}} \\
&\leq \|\Psi\|_{L^{p,q}_m(\Rdd;L^2)} C_v^m \|S^*\|_{\mathcal{N}} \|\phi\|_{M^{p',q'}_{1/m}} \quad \text{ by Lemma \ref{lem:analysisbound}} \\
&\lesssim  \|\Psi\|_{L^{p,q}_m(\Rdd;L^2)} C_v^m \|S^*\|_{\mathcal{N}} \|\phi\|_{M^{1}_{v}},
\end{align*} 
where the last inequality uses that $M^1_v(\Rd)\hookrightarrow M^{p,q}_m(\Rd)$ for all $1\leq p,q \leq \infty$ and all $v$-moderate weights $m$. The reader should observe that this definition agrees with our original definition \eqref{eq:synthesisintegral} when $\Psi \in L^2(\Rdd;L^2).$

\begin{lem} \label{lem:synthesisbound}
	Let $m$ be a $v$-moderate weight. For any $1\leq p,q\leq\infty$, the map $\opstft_S^*$
is a bounded, linear map from $L^{p,q}_m(\Rdd;L^2)$ to $M^{p,q}_m(\Rd)$, with $\|\opstft_S^*(\Psi)\|_{M^{p,q}_m}\leq \|\Psi\|_{L^{p,q}_m(\Rdd;L^2)}\cdot C_v^m\cdot \|S^*\|_{\mathcal{N}}\cdot \|\varphi_0\|_{M^1_v}$

\end{lem}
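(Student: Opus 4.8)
The plan is to prove that $\opstft_S^*$ maps $L^{p,q}_m(\Rdd;L^2)$ boundedly into $M^{p,q}_m(\Rd)$ by computing the modulation space norm of $\opstft_S^*(\Psi)$ directly, i.e. by estimating $\|V_{\varphi_0}(\opstft_S^*(\Psi))\|_{L^{p,q}_m}$. The key idea is to unfold the definition of the modulation norm through a duality/pairing argument: for fixed $w\in \Rdd$ we want to understand $V_{\varphi_0}(\opstft_S^*(\Psi))(w) = \inner{\opstft_S^*(\Psi)}{\pi(w)\varphi_0}_{M^\infty_{1/v},M^1_v}$, and by the defining weak formula for $\opstft_S^*$ this equals $\int_{\Rdd} \inner{\Psi(z)}{\opstft_S(\pi(w)\varphi_0)(z)}_{L^2}\, dz = \int_{\Rdd} \inner{\Psi(z)}{S\pi(z)^*\pi(w)\varphi_0}_{L^2}\, dz$.

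\textbf{First} I would expand $S$ using \eqref{eq:nuclearexpansion}, writing $S=\sum_n \xi_n\otimes \phi_n$ with $\sum_n \|\xi_n\|_{L^2}\|\phi_n\|_{M^1_v}<\infty$, so that $S\pi(z)^*\pi(w)\varphi_0 = \sum_n \inner{\pi(z)^*\pi(w)\varphi_0}{\phi_n}_{M^1_v} \xi_n$ and hence $\inner{\Psi(z)}{S\pi(z)^*\pi(w)\varphi_0}_{L^2}$ factors through scalar STFT-type quantities. \textbf{Next}, using the composition law \eqref{eq:tfcomposition}, I would rewrite $\pi(z)^*\pi(w)$ in terms of a single time-frequency shift by $w-z$ (up to a unimodular phase), which converts the inner integrand into something controlled by $|V_{\phi_n}\varphi_0(w-z)|$ multiplied by $\|\Psi(z)\|_{L^2}$. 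This produces a pointwise bound of the form
\begin{equation*}
  |V_{\varphi_0}(\opstft_S^*(\Psi))(w)| \leq \sum_{n=1}^\infty \|\xi_n\|_{L^2}\int_{\Rdd} \|\Psi(z)\|_{L^2}\, |V_{\phi_n}\varphi_0(w-z)|\, dz,
\end{equation*}
i.e. a convolution of $\|\Psi(\cdot)\|_{L^2}$ against the function $|V_{\phi_n}\varphi_0|$ on $\Rdd$.

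\textbf{Then} I would apply the convolution (Young-type) inequality for the weighted mixed-norm spaces $L^{p,q}_m(\Rdd)$: convolving with an $L^1_v(\Rdd)$ kernel is bounded on $L^{p,q}_m$ with operator norm controlled by $C_v^m$ times the $L^1_v$-norm of the kernel. Since $\phi_n,\varphi_0\in M^1_v(\Rd)$, the quantity $\|V_{\phi_n}\varphi_0\|_{L^1_v(\Rdd)}$ is finite and bounded by $\|\phi_n\|_{M^1_v}\|\varphi_0\|_{M^1_v}$ (this is essentially the standard convolution relation $M^1_v \ast M^1_v$, or Proposition \ref{prop:rankonecase} applied to the pair $\varphi_0,\phi_n$). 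Summing over $n$ and using the infimum defining $\|S^*\|_{\mathcal{N}}$ yields the claimed bound with constant $C_v^m\cdot \|S^*\|_{\mathcal{N}}\cdot \|\varphi_0\|_{M^1_v}$.

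\textbf{The main obstacle} I anticipate is the careful justification of interchanging the sum over $n$, the integral over $z$, and the duality bracket, and of the phase-rearrangement via \eqref{eq:tfcomposition} inside the weak definition of $\opstft_S^*$; one must verify that every interchange is legitimate by absolute convergence in the appropriate Banach space, exactly as flagged at the end of Lemma \ref{lem:nuclerproperties}. A secondary technical point is ensuring the pointwise convolution estimate holds for the $M^\infty_{1/v}$-valued object $\opstft_S^*(\Psi)$ rather than only for $L^2$ data; this is where the weak definition of $\opstft_S^*$ and the density/duality setup from Lemma \ref{lem:analysisbound} must be invoked to make the STFT $V_{\varphi_0}(\opstft_S^*(\Psi))$ meaningful and to pass the estimate through.
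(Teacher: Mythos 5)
Your proposal is correct and takes essentially the same route as the paper — unfolding $V_{\varphi_0}(\opstft_S^*(\Psi))(w)$ through the weak definition of $\opstft_S^*$, converting $\pi(z)^*\pi(w)$ into a single shift via \eqref{eq:tfcomposition} to get a pointwise convolution bound, and applying the mixed-norm Young inequality \eqref{eq:mixedyoung} — the only organizational difference being that you inline the nuclear expansion $S=\sum_n \xi_n\otimes\phi_n$ to produce a sum of scalar convolutions against $|V_{\phi_n}\varphi_0|$, whereas the paper keeps $S$ intact, uses the covariance relation \eqref{eq:intertwining} together with Cauchy--Schwarz to get one vector-valued kernel $\|\opstft_S(\varphi_0)(-\cdot)\|_{L^2}$, and then bounds its $L^1_v$-norm by citing Lemma \ref{lem:analysisbound}, whose proof your series argument effectively reproduces. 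One cosmetic slip: since $|\inner{\pi(z)^*\pi(w)\varphi_0}{\phi_n}_{L^2}|=|V_{\phi_n}\varphi_0(z-w)|$, your convolution kernel should be the reflection $|V_{\phi_n}\varphi_0(-\cdot)|$ rather than $|V_{\phi_n}\varphi_0|$, but this is harmless because the standing assumption $v(-z)=v(z)$ makes the two $L^1_v$-norms equal — exactly the point the paper flags at the end of its own proof.
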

\begin{proof}
As a short preparation, 	we consider $\opstft_S(\pi(z)\phi)$ for  $\phi \in L^2(\Rd)$. By definition  $$\opstft_S(\pi(z)\phi)(z')=S\pi(z')^*\pi(z)\phi=S[\pi(z)^*\pi(z')]^*\phi.$$
With $z=(x,\omega)$ and $z'=(x',\omega')$, we find using \eqref{eq:tfcomposition} and \eqref{eq:tfadjoint} that \begin{equation} \label{eq:intertwining}
  \opstft_S(\pi(z)\phi)(z')=S[e^{2\pi i x\cdot(\omega'-\omega)}\pi(z'-z)]^*\phi=e^{2\pi i x\cdot (\omega-\omega')}\opstft_S(\phi)(z'-z).
\end{equation}

	Recall that $\varphi_0$ is the $L^2$-normalized Gaussian on $\Rd$, and that the norm on $M^{p,q}_m(\Rd)$ is given by $\|\psi\|_{M^{p,q}_m}=\|V_{\varphi_0}\psi\|_{L^{p,q}_m}.$ We therefore calculate that
\begin{align*}
  |V_{\varphi_0}(\opstft_S^*(\Psi))(z)|&=|\inner{\opstft_S^*(\Psi) }{\pi(z)\varphi_0}_{M^\infty_{1/v},M^1_v}| \\
&= \left| \int_{\Rdd} \inner{\Psi(z')}{\opstft_S(\pi(z)\varphi_0)(z')}_{L^2} \ dz' \right| \\
&\leq \int_{\Rdd} |\inner{\Psi(z')}{\opstft_S(\pi(z)\varphi_0)(z')}_{L^2}| \ dz' \\
&\leq  \int_{\Rdd} \|\Psi(z')\|_{L^2} \|\opstft_S(\pi(z)\varphi_0)(z')\|_{L^2} \ dz' \\
&=\int_{\Rdd} \|\Psi(z')\|_{L^2} \|\opstft_S(\varphi_0)(z'-z)\|_{L^2} \ dz' \quad \text{ by \eqref{eq:intertwining}.}
\end{align*}

By \cite[Prop. 11.1.3]{Grochenig:2001} the space $L^{p,q}_m(\Rdd)$ satisfies the convolution relation
\begin{equation} \label{eq:mixedyoung}
  \|F\ast G\|_{L^{p,q}_m} \leq \|F\|_{L^{p,q}_m} \|G\|_{L^1_v}
\end{equation}
for $F\in L^{p,q}_m(\Rdd)$ and $G\in L^1_v(\Rdd)$. If we let $F(z)=\|\Psi(z)\|_{L^2}$ and $G(z)=\|\opstft_S(\varphi_0)(-z)\|_{L^2}$ the calculation above states that 
\begin{equation*}
|V_{\varphi_0}(\opstft_S^*(\Psi))(z)| \leq F\ast G(z),
\end{equation*} 
which in light of \eqref{eq:mixedyoung} gives
\begin{align*}
  \|V_{\varphi_0}(\opstft_S^* \Psi)\|_{L^{p,q}_m} &\leq \|F\|_{L^{p,q}_m} \|G\|_{L^1_v} \\
  &= \|\Psi\|_{L^{p,q}_m(\Rdd;L^2)} \|\opstft_S(\varphi_0)\|_{L^{1}_v(\Rdd;L^2)} \\
&\leq \|\Psi\|_{L^{p,q}_m(\Rdd;L^2)}C_v^m \|S^*\|_{\mathcal{N}} \|\varphi_0\|_{M^1_v}  ,
\end{align*}
where we have used Lemma \ref{lem:analysisbound} in the last step. The reader should also note that $\|\opstft_S(\varphi_0)\|_{L^{1}_v(\Rdd;L^2)}=\|G\|_{L^1_v}$ is a straightforward computation, but relies on our assumption that $v(-z)=v(z).$ 
\end{proof}

Finally, we also need that the inversion formula $\opstft_S^*\opstft_S\psi =\|S\|_{\HS}^2 \psi$ from Lemma \ref{lem:continuous} remains valid on the other modulation spaces.
\begin{lem}
	Let $\psi \in M^{\infty}_{1/v}(\Rdd).$ Then $\|S\|_{\HS}^2\cdot\psi=  \opstft_S^* \opstft_S(\psi).$
\end{lem}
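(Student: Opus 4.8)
The plan is to prove the identity weakly, by testing against the predual. Since $M^\infty_{1/v}(\Rd)=(M^1_v(\Rd))'$, it suffices to show that $\inner{\opstft_S^*\opstft_S(\psi)}{\phi}_{M^\infty_{1/v},M^1_v}=\|S\|_{\HS}^2\inner{\psi}{\phi}_{M^\infty_{1/v},M^1_v}$ for every $\phi\in M^1_v(\Rd)$. Both $\opstft_S^*\opstft_S(\psi)$ and $\|S\|_{\HS}^2\psi$ are well-defined elements of $M^\infty_{1/v}(\Rd)$ by Lemmas \ref{lem:analysisbound} and \ref{lem:synthesisbound} applied with $p=q=\infty$ and $m=1/v$, so this reduction is legitimate. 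Unwinding the duality definition of $\opstft_S^*$ gives $\inner{\opstft_S^*\opstft_S(\psi)}{\phi}_{M^\infty_{1/v},M^1_v}=\int_{\Rdd}\inner{\opstft_S(\psi)(z)}{\opstft_S(\phi)(z)}_{L^2}\,dz$, so the whole task is to evaluate this scalar integral.

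My approach is to expand $S$ using \eqref{eq:nuclearexpansion} as $S=\sum_n\xi_n\otimes\phi_n$ with $\sum_n\|\xi_n\|_{L^2}\|\phi_n\|_{M^1_v}<\infty$, so that $\opstft_S(\psi)(z)=\sum_n V_{\phi_n}\psi(z)\,\xi_n$ and $\opstft_S(\phi)(z)=\sum_m V_{\phi_m}\phi(z)\,\xi_m$. Substituting these and expanding the $L^2$ inner product turns the integral into the double sum $\sum_{n,m}\inner{\xi_n}{\xi_m}_{L^2}\int_{\Rdd}V_{\phi_n}\psi(z)\overline{V_{\phi_m}\phi(z)}\,dz$. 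The interchange of summation and integration will be justified by absolute convergence: bounding $|\inner{\xi_n}{\xi_m}_{L^2}|\le\|\xi_n\|_{L^2}\|\xi_m\|_{L^2}$ and estimating each integral by Hölder's inequality \eqref{eq:holder} together with Proposition \ref{prop:rankonecase} (applied with $(p,q)=(\infty,\infty)$, $m=1/v$ for $V_{\phi_n}\psi$ and with $(p,q)=(1,1)$, $m=v$ for $V_{\phi_m}\phi$) shows the double sum is dominated by a constant times $\big(\sum_n\|\xi_n\|_{L^2}\|\phi_n\|_{M^1_v}\big)^2\|\psi\|_{M^\infty_{1/v}}\|\phi\|_{M^1_v}<\infty$.

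The heart of the argument is a Moyal-type identity across the duality, namely $\int_{\Rdd}V_{\phi_n}\psi(z)\overline{V_{\phi_m}\phi(z)}\,dz=\inner{\psi}{\phi}_{M^\infty_{1/v},M^1_v}\,\inner{\phi_m}{\phi_n}_{L^2}$ for $\psi\in M^\infty_{1/v}(\Rd)$ and $\phi,\phi_n,\phi_m\in M^1_v(\Rd)$. I would prove this by rewriting the left-hand side as $\inner{\psi}{\int_{\Rdd}V_{\phi_m}\phi(z)\,\pi(z)\phi_n\,dz}_{M^\infty_{1/v},M^1_v}$. The synthesis integral appearing here \emph{does} converge absolutely as a Bochner integral in $M^1_v(\Rd)$, since $\|\pi(z)\phi_n\|_{M^1_v}\le v(z)\|\phi_n\|_{M^1_v}$ by \eqref{eq:tfbound} and $V_{\phi_m}\phi\in L^1_v(\Rdd)$ by Proposition \ref{prop:rankonecase}; this absolute convergence is what permits moving the integral inside the duality bracket. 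Pairing this $M^1_v$-valued integral against an arbitrary $h\in L^2(\Rd)$ and applying the ordinary Moyal identity to the resulting $L^2$ functions identifies the integral as $\inner{\phi_n}{\phi_m}_{L^2}\phi$, which yields the claimed identity.

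Inserting this identity collapses the $z$-integral in each term and leaves $\inner{\opstft_S^*\opstft_S(\psi)}{\phi}_{M^\infty_{1/v},M^1_v}=\inner{\psi}{\phi}_{M^\infty_{1/v},M^1_v}\sum_{n,m}\inner{\xi_n}{\xi_m}_{L^2}\inner{\phi_m}{\phi_n}_{L^2}$, and the remaining absolutely convergent double sum is precisely $\tr(SS^*)$ for the expansions $S=\sum_n\xi_n\otimes\phi_n$ and $S^*=\sum_m\phi_m\otimes\xi_m$, hence equals $\inner{S}{S}_{\HS}=\|S\|_{\HS}^2$ by the trace formula for the Hilbert--Schmidt inner product recorded in Section \ref{sec:hilbertschmidt}. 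This establishes the weak identity and thus the lemma. The main obstacle is exactly the Moyal identity of the third step: the tempting shortcut of writing the integrand as $\inner{\psi}{\pi(z)S^*S\pi(z)^*\phi}$ and pushing the integral $\int_{\Rdd}\pi(z)S^*S\pi(z)^*\phi\,dz$ through the pairing fails, because the crude nuclear estimate only produces the non-integrable factor $\int_{\Rdd}v(z)\,dz=\infty$; the rank-one reorganization, which trades this divergent operator-valued integral for the absolutely convergent scalar integrals $\int_{\Rdd}V_{\phi_m}\phi(z)\,\pi(z)\phi_n\,dz$, is precisely what circumvents the difficulty.
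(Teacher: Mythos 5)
Your proof is correct, but it takes a genuinely different route from the paper's. Both arguments begin with the same weak reduction to showing $\inner{\opstft_S^*\opstft_S(\psi)}{\phi}_{M^\infty_{1/v},M^1_v}=\|S\|_{\HS}^2\inner{\psi}{\phi}_{M^\infty_{1/v},M^1_v}$, but they diverge immediately afterwards. The paper rewrites the integrand as $\inner{\psi}{\pi(z)S^*S\pi(z)^*\phi}_{M^\infty_{1/v},M^1_v}$, invokes the already-proven $L^2$ case (Lemma \ref{lem:continuous}, which rests on Werner's formula $\int_{\Rdd}\pi(z)R\pi(z)^*\,dz=\tr(R)\,\mathrm{I}$), approximates $\psi$ in the weak* topology by a norm-bounded sequence in $L^2(\Rd)$ using a construction from \cite{Dorfler:2011}, and passes to the limit by dominated convergence, with the dominating function $v(z)\|\opstft_S(\phi)(z)\|_{L^2}$ integrable by Lemma \ref{lem:analysisbound}. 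You instead compute everything directly from the nuclear expansion \eqref{eq:nuclearexpansion}--\eqref{eq:extendedexpansion}: you expand the scalar integral into an absolutely convergent double sum, prove a Moyal identity across the $(M^\infty_{1/v},M^1_v)$ duality via an absolutely convergent Bochner integral in $M^1_v(\Rd)$ (this is the correct fix: the integral $\int_{\Rdd}V_{\phi_m}\phi(z)\,\pi(z)\phi_n\,dz$ converges because $V_{\phi_m}\phi\in L^1_v$ and $\|\pi(z)\phi_n\|_{M^1_v}\leq v(z)\|\phi_n\|_{M^1_v}$, whereas the operator-valued integral $\int\pi(z)S^*S\pi(z)^*\phi\,dz$ admits no such absolute bound, exactly as you diagnose), and identify the resulting constant $\sum_{n,m}\inner{\xi_n}{\xi_m}_{L^2}\inner{\phi_m}{\phi_n}_{L^2}$ as $\tr(SS^*)=\|S\|_{\HS}^2$, which is a correct computation given the absolute convergence of the expansion in $\HS$. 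What each approach buys: yours is self-contained at the distributional level --- it needs only the classical $L^2$ Moyal identity and never touches Werner's theorem or the weak* approximation machinery, at the cost of several series/integral interchanges, all of which you justify properly via Proposition \ref{prop:rankonecase} and H\"older \eqref{eq:holder}; the paper's argument is softer, reusing Lemma \ref{lem:continuous} wholesale and avoiding any expansion bookkeeping, but it imports the nontrivial fact that every $\psi\in M^\infty_{1/v}(\Rd)$ is a weak* limit of an $M^\infty_{1/v}$-bounded sequence from $L^2(\Rd)$.
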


\begin{proof}

We need to show that for $\phi\in M^1_v(\Rd)$ we have
\begin{equation*}
  \inner{\opstft_S^* \opstft_S \psi}{\phi}_{M^\infty_{1/v},M^1_v}=\|S\|_{\HS}^2 \inner{\psi}{\phi}_{M^\infty_{1/v},M^1_v}.
\end{equation*}
As a preliminary step, we rewrite the left hand side of this expression in a way that involves explictly the action of $\psi$ as a functional: \begin{align*}
   \inner{\opstft_S^* \opstft_S(\psi)}{\phi}_{M^\infty_{1/v},M^1_v}&=\int_{\Rdd} \inner{\opstft_S(\psi)(z)}{\opstft_S(\phi)(z)}_{L^2} \ dz \\
&= \int_{\Rdd} \inner{S\pi(z)^*\psi}{S\pi(z)^*\phi}_{L^2} \ dz \\
&= \int_{\Rdd} \inner{\pi(z)^*\psi}{S^*S\pi(z)^*\phi}_{M^{\infty}_{1/v},M^1_v} \ dz \quad \text{ by \eqref{eq:extension}}\\
&= \int_{\Rdd} \inner{\psi}{\pi(z)S^*S\pi(z)^*\phi}_{M^\infty_{1/v},M^1_v} \ dz \quad \text{ by \eqref{eq:tfdual}}.
\end{align*}

Hence it suffices to show that 
\begin{equation*}
 \int_{\Rdd} \inner{\psi}{\pi(z)S^*S\pi(z)^*\phi}_{M^\infty_{1/v},M^1_v} \ dz=\|S\|_{\HS}^2 \inner{\psi}{\phi}_{M^\infty_{1/v},M^1_v}.
\end{equation*}
When $\psi \in L^2(\Rd)\subset M^\infty_{1/v}(\Rd)$, this holds by Lemma \ref{lem:continuous}. To proceed, we will use that for any $\psi\in M^\infty_{1/v}(\Rd)$ there exists a sequence $\{\psi_n\}_{n=1}^\infty$ in $L^2(\Rd)$ with $\|\psi_n\|_{M^\infty_{1/v}}\lesssim \|\psi\|_{M^\infty_{1/v}}$ such that $\psi_n$ converges to $\psi$ in the weak* topology of $M^\infty_{1/v}(\Rd)$ as $n\to \infty$; a construction of such a sequence may be found in the proof of \cite[Cor. 7]{Dorfler:2011}. Let us define 
\begin{align*}
  \Xi_n &:=  \|S\|_{\HS}^2 \inner{\psi_n}{\phi}_{M^\infty_{1/v},M^1_v} \\
&=  \int_{\Rdd} \inner{\psi_n}{\pi(z)S^*S\pi(z)^*\phi}_{M^\infty_{1/v},M^1_v} \ dz.
\end{align*}
Using the upper expression for $\Xi_n$ above, we have that $\Xi_n \to \|S\|_{\HS}^2 \inner{\psi}{\phi}_{M^\infty_{1/v},M^1_v}$ as $n\to \infty$ by the weak* convergence of $\psi_n$ to $\psi$. Using the lower expression, we find -- assuming for now that the limit may be taken inside the integral -- that 

\begin{align*}
  \lim_{n\to \infty} \Xi_n&=\lim_{n\to \infty} \int_{\Rdd} \inner{\psi_n}{\pi(z)S^*S\pi(z)^*\phi}_{M^\infty_{1/v},M^1_v} \ dz \\
&= \int_{\Rdd} \lim_{n\to \infty} \inner{\psi_n}{\pi(z)S^*S\pi(z)^*\phi}_{M^\infty_{1/v},M^1_v} \ dz \\
&= \int_{\Rdd} \inner{\psi}{\pi(z)S^*S\pi(z)^*\phi}_{M^\infty_{1/v},M^1_v} \ dz.
\end{align*}
Hence we have shown that $$\int_{\Rdd} \inner{\psi}{\pi(z)S^*S\pi(z)^*\phi}_{M^\infty_{1/v},M^1_v} \ dz=\lim_{n\to \infty} \Xi_n = \|S\|_{\HS}^2 \inner{\psi}{\phi}_{M^\infty_{1/v},M^1_v},$$ which means that we are done once the interchange of the limit and integral has been justified. For each $n$ we may bound the integrand by 
{\small \begin{align*}
  |\inner{\psi_n}{\pi(z)S^*S\pi(z)^*\phi}_{M^\infty_{1/v},M^1_v}|&\leq \|\psi_n\|_{M^\infty_{1/v}}\cdot \|\pi(z)S^*S\pi(z)^*\phi\|_{M^1_v} \\
&\lesssim \|\psi\|_{M^\infty_{1/v}} \cdot v(z)\cdot   \|S^*\|_{\mathcal{N}} \cdot \|S\pi(z)^*\phi\|_{L^2} \quad \text{ by  \eqref{eq:tfbound}, \eqref{eq:opnormvsnuclear}}  \\
&= \|\psi\|_{M^\infty_{1/v}} \cdot \|S^*\|_{\mathcal{N}} \cdot v(z)\cdot \|\opstft_S(\phi)(z)\|_{L^2}.
\end{align*}}

Since $\phi \in M^1_{v}(\Rd)$, it follows by Lemma \ref{lem:analysisbound} that $z\mapsto v(z)\cdot \|\opstft_S(\phi)(z)\|_{L^2}$ is an integrable function. Hence we may apply the dominated convergence theorem.
\end{proof}
The proof of Theorem \ref{thm:continuous} is now straightforward. 
\begin{proof}[Proof of Theorem \ref{thm:continuous}]
The upper bound $\|\opstft_S(\psi)\|_{L^{p,q}_m(\Rdd;L^2)} \leq C_v^m \|S^*\|_{\mathcal{N}} \cdot \|\psi\|_{M^{p,q}_m}$ is the content of Lemma \ref{lem:analysisbound}.
By using the inversion formula and Lemma \ref{lem:synthesisbound} we obtain 
\begin{equation*}
  \|\psi\|_{M^{p,q}_m} = \frac{1}{\|S\|_{\HS}^2} \|\opstft_S^* \opstft_S\psi\|_{M^{p,q}_m}\leq \frac{C_v^m \|S^*\|_{\mathcal{N}}\cdot \|\varphi_0\|_{M^1_v}}{\|S\|_{\HS}^2}  \|\opstft_S(\psi)\|_{L^{p,q}_m(\Rdd;L^2)},
\end{equation*}
which implies the lower bound.
\end{proof}

\begin{rem}
	A different proof of a lower bound, more in line with the arguments in the proof of \cite[Prop. 2.2]{Grochenig:2011toft} (see Section \ref{sec:bonychemin} for more on this result), is to use that $S$ has a singular value decomposition 
	\begin{equation*}
S=\sum_{n=1}^\infty \lambda_n \eta_n \otimes \xi_n,
\end{equation*}
where $\lambda_n$ is a summable sequence of non-negative numbers and $\{\eta\}_{n=1}^\infty$, $\{\xi_n\}_{n=1}^\infty$ are orthonormal sequences in $L^2(\Rd)$. It is easy to check that since $S^*$ is bounded from $L^2(\Rd)$ to $M^1_v(\Rd)$, we must have $\xi_n \in M^1_v(\Rd)$ for all $n\in \N$. Then we find that
\begin{align*}
\|S\pi(z)^*\psi\|^2_{L^2}&=\left\| \sum_{n=1}^\infty \lambda_n V_{\xi_n} \psi (z) \phi_n \right\|_{L^2}^2 \\
&=\sum_{n=1}^\infty \lambda_n^2 |V_{\xi_n} \psi (z)|^2.
\end{align*}
Hence $\|S\pi(z)^*\psi\|_{L^2}\geq \lambda_1 |V_{\xi_1}\psi(z)|$, which leads to a lower bound by Theorem \ref{thm:windowindependent}. We have chosen to prove the lower bound in terms of $\opstft_S^*$ to emphasize the interpretation of our results as an STFT with operators as windows.
\end{rem}

As a first example we make sure that our result includes the well-known window independence from Theorem \ref{thm:windowindependent} as a special case. 
\begin{exmp} \label{exmp:recover}
As in Example \ref{exmp:finiterank}, we consider $\{\phi_n\}_{n=1}^N\subset M^1_v(\Rd),$ let $\{\xi_n\}_{n=1}^N$ be an orthonormal set in $L^2(\Rd)$ and define $S=\sum_{n=1}^N \xi_n \otimes \phi_n$. For $\psi \in M^{\infty}_{1/v}(\Rd)$ we then have $$\opstft_S(\psi)(z)=\sum_{n=1}^N V_{\phi_n}\psi(z)\xi_n.$$ By the orthonormality of the $\xi_n$'s we therefore have $$\|\opstft_S(\psi)(z)\|_{L^2}^2=\sum_{n=1}^N |V_{\phi_n}\psi(z)|^2.$$ It follows by Theorem \ref{thm:continuous} that $$C_{lower}\cdot\|\psi\|_{M^{p,q}_m}\leq \left\| \sqrt{\sum_{n=1}^N |V_{\phi_n}\psi|^2} \right\|_{L^{p,q}_m}\leq C_{upper}\cdot \|\psi\|_{M^{p,q}_m}.$$ In particular, if $N=1$ we recover Theorem \ref{thm:windowindependent} in the form $$C_{lower}\cdot \|\psi\|_{M^{p,q}_m}\leq \left\| V_{\phi_1}\psi \right\|_{L^{p,q}_m}\leq C_{upper} \cdot \|\psi\|_{M^{p,q}_m},$$ and it is easy to show that in this case 
\begin{align*}
C_{upper}&= C_{v}^m \cdot \|\phi_1\|_{M^1_v}\\
C_{lower}&= \|\phi_1\|_{L^2}^2\cdot (C_v^m \cdot \|\phi_1\|_{M^1_v}\cdot \|\varphi_0\|_{M^1_v})^{-1}.
\end{align*}
\end{exmp}

\section{The Weyl calculus and Bony-Chemin spaces} \label{sec:bonychemin}

In Section \ref{sec:hilbertschmidt} we defined Hilbert-Schmidt operators as integral operators, but any Hilbert-Schmidt operator can also be described as a \textit{Weyl operator.}  To define Weyl operators, we first introduce the \textit{cross-Wigner distribution} of  $\phi,\psi \in L^2(\Rd)$, which is the function
\begin{equation} \label{eq:wigner}
  W(\psi,\phi)(x,\omega)=\int_{\Rd} \psi(x+t/2)\overline{\phi(x-t/2)} e^{-2\pi i \omega\cdot t} \ dt \quad \text{ for } x,\omega\in \Rd.
\end{equation}
When $\psi = \phi$ we write $W(\psi)=W(\psi,\psi)$.
Given $\weyl \in L^2(\Rdd)$, we can define the Weyl operator $L_\weyl \in \HS$ by requiring that 
\begin{equation*}
\inner{L_\weyl \phi}{\psi}_{L^2}=\inner{\weyl}{W(\psi,\phi)}_{L^2} \quad \text{ for all } \psi,\phi \in L^2(\Rd).
\end{equation*}
The operator $L_\weyl$ is called the \textit{Weyl transform} of $\weyl$, and $\weyl$ is the \textit{Weyl symbol} of $L_\weyl.$ It is well-known that the Weyl transform $a\mapsto L_a$ is unitary from $L^2(\Rdd)$ to $\HS$. In particular, every $T\in \HS$ has a unique Weyl symbol $\weyl\in L^2(\Rdd)$ such that $T=L_\weyl.$

An interesting property of the Weyl symbol is its interaction with the time-frequency shifts. In fact, we have by \cite[Lem. 3.2]{Luef:2018c} that 
\begin{equation*}
\pi(z)L_\weyl \pi(z)^* = L_{T_z (\weyl)}.
\end{equation*}

Since $\pi(z)$ is unitary on $L^2(\Rd)$, this means that for $\weyl \in L^2(\Rdd)$ we have
\begin{equation*}
\|\opstft_{L_\weyl}\psi(z) \|_{L^2}=\|\pi(z) L_\weyl \pi(z)^* \psi\|_{L^2}=\|L_{T_z (\weyl)} \psi\|_{L^2}.
\end{equation*}

We may therefore reformulate Theorem \ref{thm:continuous} in terms of the Weyl transform.

\begin{thm} \label{thm:bonychemin}
	Let $0\neq \weyl \in L^2(\Rdd)$ such that $(L_a)^* \in \nuc$. For any $1\leq p,q \leq \infty$ and $v$-moderate weight $m$, we have
	\begin{equation*}
\|\psi\|_{M^{p,q}_m} \asymp  \left( \int_{\Rd} \left(\int_{\Rd} \|L_{T_{(x,\omega)}(a)}\psi\|_{L^2}^p m(x,\omega)^p \ dx \right)^{q/p} d\omega\right)^{\frac{1}{q}} .
\end{equation*}
\end{thm}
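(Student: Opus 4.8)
The plan is to reduce Theorem \ref{thm:bonychemin} to an essentially cosmetic reformulation of the already-proven Theorem \ref{thm:continuous}, so the proof should be short. The statement concerns the operator-window $S = L_\weyl$, and the hypothesis $(L_\weyl)^* \in \nuc$ is exactly the hypothesis $S^* \in \nuc$ required by Theorem \ref{thm:continuous}. The nontrivial content has already been collected in the paragraphs immediately preceding the theorem: the covariance identity $\pi(z) L_\weyl \pi(z)^* = L_{T_z(\weyl)}$ from \cite[Lem. 3.2]{Luef:2018c}, together with unitarity of $\pi(z)$ on $L^2(\Rd)$, gives the pointwise identity
\begin{equation*}
\|\opstft_{L_\weyl}\psi(z)\|_{L^2} = \|L_\weyl \pi(z)^* \psi\|_{L^2} = \|\pi(z) L_\weyl \pi(z)^* \psi\|_{L^2} = \|L_{T_z(\weyl)}\psi\|_{L^2}.
\end{equation*}

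First I would apply Theorem \ref{thm:continuous} with $S = L_\weyl$: since $S \neq 0$ and $S^* = (L_\weyl)^* \in \nuc$, the hypotheses are met, and the theorem yields
\begin{equation*}
\|\psi\|_{M^{p,q}_m} \asymp \|\opstft_{L_\weyl}\psi\|_{L^{p,q}_m(\Rdd;L^2)} = \left(\int_{\Rd}\left(\int_{\Rd} \|\opstft_{L_\weyl}\psi(x,\omega)\|_{L^2}^p\, m(x,\omega)^p\, dx\right)^{q/p} d\omega\right)^{1/q}.
\end{equation*}
Then I would substitute the pointwise identity above, replacing $\|\opstft_{L_\weyl}\psi(x,\omega)\|_{L^2}$ by $\|L_{T_{(x,\omega)}(\weyl)}\psi\|_{L^2}$ inside the integral, which produces exactly the right-hand side of the claimed equivalence. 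The implicit constants in $\asymp$ are the $C_{lower}$ and $C_{upper}$ from Theorem \ref{thm:continuous}, now expressed in terms of $\|L_\weyl\|_{\HS} = \|\weyl\|_{L^2}$ (using unitarity of the Weyl transform) and $\|(L_\weyl)^*\|_{\mathcal{N}}$.

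There is essentially no obstacle here; the work was done in establishing Theorem \ref{thm:continuous} and in recording the covariance of the Weyl symbol under $T_z$. The only point deserving a word of care is measurability and well-definedness: one must note that $z \mapsto \|L_{T_z(\weyl)}\psi\|_{L^2}$ is the same measurable function as $z \mapsto \|\opstft_{L_\weyl}\psi(z)\|_{L^2}$, which is automatic from the identity just derived since strong measurability of $\opstft_{S}\psi$ was established in the remark following Lemma \ref{lem:genmoyal}. I would therefore present the proof as a single short paragraph: invoke Theorem \ref{thm:continuous}, cite the covariance identity to rewrite the integrand, and conclude.
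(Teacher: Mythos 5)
Your proposal is correct and matches the paper exactly: the paper gives no separate proof of Theorem \ref{thm:bonychemin}, since the two paragraphs preceding it already record the covariance identity $\pi(z)L_\weyl\pi(z)^* = L_{T_z(\weyl)}$ and the resulting pointwise equality $\|\opstft_{L_\weyl}\psi(z)\|_{L^2}=\|L_{T_z(\weyl)}\psi\|_{L^2}$, after which the theorem is stated as a direct reformulation of Theorem \ref{thm:continuous}. Your added remarks on measurability and on the constants via $\|L_\weyl\|_{\HS}=\|\weyl\|_{L^2}$ are sound and consistent with the paper's setup.
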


The above theorem generalizes a result by Gr\"ochenig and Toft in \cite[Prop. 2.2]{Grochenig:2011toft}, who showed that the the middle expression above defines an equivalent norm on $M^2_m(\Rd)$, i.e.
\begin{equation} \label{eq:grto}
\|\psi\|_{M^{2}_m}^2 \asymp   \int_{\Rd} \int_{\Rd} \|L_{T_{(x,\omega)}(a)}\psi\|_{L^2}^2 m(x,\omega)^2 \ dx  d\omega ,
\end{equation}
 under the assumptions that $m$ is of polynomial growth and $\weyl$ is a Schwartz function (stronger conditions are stated in \cite{Grochenig:2011toft}, but their proof uses only that $\weyl\in \mathscr{S}(\Rd)$). In fact, it is shown in \cite{Grochenig:2011toft} that the space of  $\psi \in \mathscr{S}'(\Rd)$ such that the right hand side of \eqref{eq:grto} is finite coincides with a space $H(m,g)$ introduced by Bony and Chemin \cite[Def. 5.1]{Bony:1994} when $g$ is the standard Euclidean metric on $\Rdd$. Hence \eqref{eq:grto} states that $H(m,g)=M^2_m(\Rd)$ with equivalent norms. 

Theorem \ref{thm:bonychemin} extends \eqref{eq:grto} in several directions. It extends from $p=q=2$ to any $1\leq p,q \leq \infty$ and from polynomial weights to general $v$-moderate weights. Our requirements on the Weyl symbol $\weyl$ are also weaker, although this is slightly obscured by the mysterious requirement that $(L_\weyl)^* \in \nuc$. By Proposition \ref{prop:projectivetensor} the condition $S^* \in \nuc$ means that the integral kernel $\kernel_S$ belongs to the projective tensor product $L^2(\Rd)\hat{\otimes} M^1_v(\Rd)$, and the Weyl symbol $\weyl$ and $\kernel_S$ are related by \cite{Heil:2003}
\begin{equation} \label{eq:kerweyl}
  k_S(x,y)=\int_{\Rd}\weyl\left(\frac{x+y}{2},\omega\right) e^{2\pi i \omega \cdot (x-y)} \ d\omega.
\end{equation}
Understanding the condition $(L_\weyl)^*\in \nuc$ thus boils down to understanding what assumptions we need on $\weyl$ to ensure that the kernel $k_S$ in \eqref{eq:kerweyl} belongs to $L^2(\Rd)\hat{\otimes} M^1_v(\Rd)$.

\subsection{Polynomial weights}
By restricting our attention to the polynomial weights $v_s(z)=(1+|z|^2)^{s/2}$ for $s\geq 0$, we obtain some sufficient conditions for $(L_\weyl )^*\in \mathcal{N}(L^2,M^1_{v_s})$, so that Theorem \ref{thm:bonychemin} holds. 

\begin{exmp}[Schwartz symbols]
	If $v=v_s$ for $s\geq 0$, we know from Example \ref{exmp:schwartz} that the Schwartz operators $\mathfrak{S}$, i.e. operators $T$ with $\kernel_T \in \mathscr{S}(\Rdd)$, form a subspace of $\nuc$. Furthermore, the space $\mathfrak{S}$ is closed under taking adjoints, and may equivalently be described as the Weyl operators $L_\weyl$ with $\weyl \in \mathscr{S}(\Rdd)$ \cite{Keyl:2015}. Taken together, this means that $\weyl \in \mathfrak{S}$ implies $(L_\weyl)^*\in \mathfrak{S} \subset \mathcal{N}(L^2,M^1_{v_s})$. Thus Theorem \ref{thm:bonychemin} applies for all Schwartz functions $\weyl$.
	\end{exmp}
	
	We then prove a slightly more refined result. Below we denote by $v_s^{4d}$ the weight function on $\R^{4d}$ given by $v_s^{4d}(z,\zeta)=(1+|z|^2+|\zeta|^2)^{s/2}$. 	
	\begin{prop}
		If $\weyl \in M^1_{v_{2s}^{4d}}(\Rdd)$ for $s\geq 0$, then $(L_\weyl)^* \in \mathcal{N}(L^2;M^1_{v_{s}})$. Hence Theorem \ref{thm:bonychemin} applies with $v=v_{s}$. 
	\end{prop}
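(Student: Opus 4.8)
The plan is to reduce everything to a statement about the integral kernel $k_{L_\weyl}$ and then read the explicit formula \eqref{eq:kerweyl} as a composition of two familiar operations. By Proposition \ref{prop:projectivetensor}, the condition $(L_\weyl)^*\in\mathcal{N}(L^2;M^1_{v_s})$ is equivalent to $k_{L_\weyl}\in L^2(\Rd)\hat{\otimes}M^1_{v_s}(\Rd)$. Since the chain of inclusions in that proposition contains $M^1_{v_s}(\Rd)\hat{\otimes}M^1_{v_s}(\Rd)\subset L^2(\Rd)\hat{\otimes}M^1_{v_s}(\Rd)$, it suffices to prove the stronger statement $k_{L_\weyl}\in M^1_{v_s}(\Rd)\hat{\otimes}M^1_{v_s}(\Rd)$, which by \eqref{eq:projtensorfeichtinger} means $k_{L_\weyl}\in M^1_{v_s\tilde{\otimes}v_s}(\Rdd)$, i.e. $L_\weyl\in\beauty_{v_s\otimes v_s}$.

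First I would record an elementary weight comparison. Writing $\rho=|x_1|^2+|\omega_1|^2$ and $\sigma=|x_2|^2+|\omega_2|^2$, one has $v_s\tilde{\otimes}v_s=[(1+\rho)(1+\sigma)]^{s/2}$ and $v_{2s}^{4d}=(1+\rho+\sigma)^{s}$, and the trivial inequality $(1+\rho)(1+\sigma)\leq(1+\rho+\sigma)^2$ gives
\begin{equation*}
v_s\tilde{\otimes}v_s\leq v_{2s}^{4d}.
\end{equation*}
Since a smaller weight produces a larger space, this yields the continuous inclusion $M^1_{v_{2s}^{4d}}(\Rdd)\hookrightarrow M^1_{v_s\tilde{\otimes}v_s}(\Rdd)$. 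Hence it is enough to prove the cleaner implication
\begin{equation*}
\weyl\in M^1_{v_{2s}^{4d}}(\Rdd)\implies k_{L_\weyl}\in M^1_{v_{2s}^{4d}}(\Rdd),
\end{equation*}
i.e. that the map $\weyl\mapsto k_{L_\weyl}$ preserves the single weighted Feichtinger algebra $M^1_{v_{2s}^{4d}}(\Rdd)$.

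To establish this I would factor \eqref{eq:kerweyl} as two standard operations. Setting $b(q,u)=\int_{\Rd}\weyl(q,\omega)e^{2\pi i\omega\cdot u}\,d\omega$, the map $\weyl\mapsto b$ is a partial (inverse) Fourier transform in the second $\Rd$-variable, and then $k_{L_\weyl}(x,y)=b\!\left(\tfrac{x+y}{2},x-y\right)$ is composition with the linear map $\mathcal{A}(x,y)=\left(\tfrac{x+y}{2},x-y\right)\in GL(2d,\R)$. Both are, up to normalization, metaplectic operators, and the weighted Feichtinger algebra is invariant under them; concretely, the substitution formula $V_g(f\circ\mathcal{A})(x,\omega)=|\det\mathcal{A}|^{-1}V_{g\circ\mathcal{A}^{-1}}(f)(\mathcal{A}x,\mathcal{A}^{-T}\omega)$ and the analogous identity for the partial Fourier transform show that each step is bounded on $M^1_{v_{2s}^{4d}}(\Rdd)$, the induced phase-space action leaving the polynomial weight invariant up to equivalence because $|\mathcal{A}x|\asymp|x|$ and $|\mathcal{A}^{-T}\omega|\asymp|\omega|$ (see the invariance results for $M^1_v$ in \cite{Grochenig:2001}). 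Composing the two steps gives $k_{L_\weyl}\in M^1_{v_{2s}^{4d}}(\Rdd)$, and the reductions above then yield $(L_\weyl)^*\in\mathcal{N}(L^2;M^1_{v_s})$.

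The part that needs the most care — the main obstacle — is the bookkeeping of weights under the two transformations: one must check that the partial Fourier transform and the $GL$-substitution really map $M^1_{v_{2s}^{4d}}$ into itself, with the weight changing only up to equivalence rather than losing a polynomial factor. The comparison $v_s\tilde{\otimes}v_s\leq v_{2s}^{4d}$ is precisely what forces the order $2s$ into the hypothesis: the product (rather than radial) structure of $v_s\tilde{\otimes}v_s$ means we cannot expect $v_s^{4d}$ to suffice, and we must start from a weight of twice the order. Everything else is routine once the metaplectic invariance input and this elementary inequality are in place.
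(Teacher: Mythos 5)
Your proposal is correct and follows essentially the same route as the paper: the identical reduction through Proposition \ref{prop:projectivetensor}, the weight comparison $v_s\tilde{\otimes}v_s\lesssim v_{2s}^{4d}$, and the chain $M^1_{v_{2s}^{4d}}(\Rdd)\hookrightarrow M^1_{v_s}(\Rd)\hat{\otimes}M^1_{v_s}(\Rd)\hookrightarrow L^2(\Rd)\hat{\otimes}M^1_{v_s}(\Rd)$. The only difference is that where the paper simply cites \cite[Prop.~7.4.1]{Heil:2003} for the key step that $\weyl\in M^1_{v_{2s}^{4d}}(\Rdd)$ implies $k_{L_\weyl}\in M^1_{v_{2s}^{4d}}(\Rdd)$, you inline a proof of that citation via the partial Fourier transform and the linear substitution $(x,y)\mapsto(\tfrac{x+y}{2},x-y)$, which is precisely the metaplectic-invariance argument behind Heil's result.
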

	\begin{proof}
		 Recall from \eqref{eq:projtensorfeichtinger}  that with $v_s\tilde{\otimes} v_s(x_1,x_2,\omega_1,\omega_2)=v_s(x_1,\omega_1)\cdot v_{s}(x_2,\omega_2),$ we have the equality $M^1_{v_{s}\tilde{\otimes} v_{s}}(\Rdd)=M^1_{v_{s}}(\Rd)\hat{\otimes}M^1_{v_{s}}(\Rd)$. One easily checks that $v_{s}\tilde{\otimes} v_{s} \lesssim v^{4d}_{2s}$, which implies by part b) of Proposition \ref{prop:modulationspaces} and Proposition \ref{prop:projectivetensor} that $$M^1_{v_{2s}^{4d}}(\Rdd)\hookrightarrow M^1_{v_s \tilde{\otimes}v_s}(\Rdd)=M^1_{v_{s}}(\Rd)\hat{\otimes}M^1_{v_{s}}(\Rd)\hookrightarrow L^2(\Rd)\hat{\otimes} M^1_{v_s}(\Rd).$$
		
		By \cite[Prop. 7.4.1]{Heil:2003}, if $\weyl \in M^1_{v_{2s}^{4d}}(\Rdd)$ then the integral kernel $\kernel_{L_a}$ also satisfies $\kernel_{L_\weyl}\in M^1_{v_{2s}^{4d}}(\Rdd)$. By the chain of inclusions above, it follows that $\kernel_{L_\weyl}\in L^2(\Rd)\hat{\otimes} M^1_{v_s}(\Rd)$, which implies $(L_\weyl)^*\in \mathcal{N}(L^2,M^1_{v_s})$ by Proposition \ref{prop:projectivetensor}.
		\end{proof}
	
	When $s=0$ the condition above is rather weak, as $M^1(\Rdd)$ even contains non-differentiable functions.

\section{Cohen's class} \label{sec:cohensclass}

Another interesting interpretation of Theorem \ref{thm:continuous} is in terms of Cohen's class of time-frequency distributions introduced by Cohen in \cite{Cohen:1966}. Typically the definition of the Cohen's class distribution $Q_a$ associated with $a\in \mathscr{S}'(\Rdd)$ is that \cite{Grochenig:2001}
\begin{equation} \label{eq:cohentrad}
Q_a(\psi) = a \ast W(\psi) \quad \text{ for any } \psi \in \mathscr{S}(\Rd).
\end{equation}   
One can show that $\psi \in \mathscr{S}(\Rd)$ implies that $W(\psi)\in \mathscr{S}(\Rdd)$, so \eqref{eq:cohentrad} is well-defined as the convolution of a tempered distribution with a Schwartz function. All our examples will satisfy $a\in L^2(\Rdd)$, and in this case $Q_a(\psi)$ is defined by \eqref{eq:cohentrad} for any $\psi \in L^2(\Rd)$, as a slight modification of Moyal's identity gives that $W(\psi)\in L^2(\Rdd)$, so \eqref{eq:cohentrad} is well-defined by Young's inequality. 

In \cite{Luef:2018b} we have given an alternative description of Cohen's class. Given a Hilbert-Schmidt operator $T\in \HS$, we define the Cohen's class distribution $Q_T$ associated with $T$ by
\begin{equation} \label{eq:cohenclassgeneraloperator}
  Q_T(\psi)(z)= \inner{T\pi(z)^*\psi}{\pi(z)^* \psi}_{L^2}.
\end{equation}

Any Cohen class distribution $Q_\weyl$ for $\weyl \in L^2(\Rdd)$ can equivalently be described using \eqref{eq:cohenclassgeneraloperator}, since it follows from \cite[Prop. 7.1]{Luef:2018b} that $$Q_\weyl(\psi)=Q_{L_{\check{\weyl}}}(\psi),$$ where $L$ denotes the Weyl transform and $\check{\weyl}(z)=\weyl(-z)$.  From now on we will therefore  write Cohen's class distributions in the form $Q_T$ for $T\in \HS$ rather than using \eqref{eq:cohentrad}. 

In light of \eqref{eq:cohenclassgeneraloperator} we clearly have the relation 
\begin{equation} \label{eq:stftcohenclass}
\|\opstft_{S}(\psi)(z)\|^2_{L^2}= \inner{S\pi(z)^*\psi}{S\pi(z)^*\psi}_{L^2} = Q_{S^*S}(\psi)(z).
\end{equation}

Hence $\|\opstft_{S}(\psi)(z)\|^2_{L^2}=\sqrt{Q_{S^*S}(\psi)(z)}$, and we see that another reinterpretation of Theorem \ref{thm:continuous} is the following.

\begin{thm} \label{thm:cohenclass}
	Let $0\neq S\in \HS$ such that $S^*\in \nuc$. For any $1\leq p,q \leq \infty$ and $v$-moderate weight $m$, we have
	\begin{equation*} 
  \|\psi\|_{M^{p,q}_m} \asymp \left\|\sqrt{Q_{S^*S}(\psi)}\right\|_{L^{p,q}_{m}(\Rdd)} .
\end{equation*}
\end{thm}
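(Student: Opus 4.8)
The plan is to read the theorem off directly from the identity \eqref{eq:stftcohenclass} together with the main result, Theorem \ref{thm:continuous}. The essential point is already recorded in \eqref{eq:stftcohenclass}, which asserts the pointwise equality $\|\opstft_S(\psi)(z)\|_{L^2}^2 = Q_{S^*S}(\psi)(z)$ for every $z \in \Rdd$. Since $Q_{S^*S}(\psi)(z)$ is a non-negative quantity, taking square roots yields $\|\opstft_S(\psi)(z)\|_{L^2} = \sqrt{Q_{S^*S}(\psi)(z)}$ pointwise.

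First I would observe that the mixed-norm spaces $L^{p,q}_m(\Rdd)$ (on scalar functions) and $L^{p,q}_m(\Rdd;L^2)$ (on $L^2$-valued functions) are built by the same iterated-integral procedure, the only difference being that the pointwise modulus $|F(z)|$ is replaced by the pointwise Hilbert-space norm $\|\Psi(z)\|_{L^2}$. Because the two relevant integrands $\sqrt{Q_{S^*S}(\psi)(z)}$ and $\|\opstft_S(\psi)(z)\|_{L^2}$ agree at every point $z$, the two mixed-norm expressions coincide exactly:
$$\left\|\sqrt{Q_{S^*S}(\psi)}\right\|_{L^{p,q}_m(\Rdd)} = \|\opstft_S(\psi)\|_{L^{p,q}_m(\Rdd;L^2)}.$$
No estimate is needed here; this is an identity of norms valid for all $1 \leq p,q \leq \infty$.

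Finally I would invoke Theorem \ref{thm:continuous}, whose hypotheses ($0 \neq S \in \HS$ with $S^* \in \nuc$) are precisely those assumed here, to conclude $\|\opstft_S(\psi)\|_{L^{p,q}_m(\Rdd;L^2)} \asymp \|\psi\|_{M^{p,q}_m}$, and hence the claimed equivalence. There is no genuine obstacle: all the analytic work has been absorbed into the proof of Theorem \ref{thm:continuous} and into establishing \eqref{eq:stftcohenclass}, so the present statement is a clean reinterpretation of the main theorem in the language of Cohen's class. The only point to state carefully is that the equality of the two mixed norms requires nothing beyond the pointwise identity, the outer $L^{p,q}_m$ structure acting identically on the two scalar integrands.
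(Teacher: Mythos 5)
Your proposal is correct and follows essentially the same route as the paper, which likewise reads Theorem \ref{thm:cohenclass} off as an immediate reinterpretation of Theorem \ref{thm:continuous} via the pointwise identity \eqref{eq:stftcohenclass} and a square root. The only point you gloss over (and which the paper itself defers to a remark rather than the proof) is how to interpret $Q_{S^*S}(\psi)$ when $\psi \in M^\infty_{1/v}(\Rd)$ rather than $L^2(\Rd)$; this is handled by the duality definition \eqref{eq:cohenrem2}, under which the pointwise identity $Q_{S^*S}(\psi)(z) = \|\opstft_S(\psi)(z)\|_{L^2}^2$ remains valid.
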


\begin{exmp}[Spectrograms]
To see why the square root appears in Theorem \ref{thm:cohenclass}, it is worth recalling the simple case of $S=\xi \otimes \phi$ for some $\phi \in M^1_v(\Rd)$ and $\|\xi\|_{L^2}=1$. Then $S^*S=\phi \otimes \phi$, and one may check that 
\begin{equation*} 
Q_{S^*S}(\psi)(z)=|V_\phi \psi(z)|^2.
\end{equation*}
This is the so-called \textit{spectrogram} of $\psi$ with window $\phi$, and we know from Theorem \ref{thm:windowindependent} that $\|\psi\|_{M^{p,q}_m}\asymp \|V_\phi \psi\|_{L^{p,q}_m}$, hence we need the square root in Theorem \ref{thm:cohenclass}.  
\end{exmp}
 
\begin{rem} 
We have skipped one technical detail in the Theorem \ref{thm:cohenclass} above, namely how to interpret $Q_{T}(\psi)$ for $\psi \in M^\infty_{1/v}(\Rdd)$. This is certainly not immediately covered by \eqref{eq:cohentrad} or \eqref{eq:cohenclassgeneraloperator}. We solve this issue by rewriting  $Q_T(\psi)$ to
	\begin{equation*}
Q_T(\psi)=\inner{\pi(z)^*\psi}{T^*\pi(z)^*\psi}_{L^2}
\end{equation*} 
and then replacing the bracket by duality:
\begin{equation} \label{eq:cohenrem2}
Q_T(\psi):=\inner{\pi(z)^*\psi}{T^*\pi(z)^*\psi}_{M^\infty_{1/v},M^1_v}.
\end{equation}
This defines $Q_T(\psi)$ for $\psi \in M^\infty_{1/v}(\Rdd)$ whenever $T^*$ maps $M^\infty_{1/v}(\Rd)$ into  $M^1_v(\Rd)$, which is true if $T=S^*S$ for $S^*\in \nuc$ or if $k_T \in M^1_{v\otimes v}(\Rdd)$, see \cite[Prop. 4.1]{Skrettingland:2019} for a proof. It is straightforward to check that \eqref{eq:cohenclassgeneraloperator} and \eqref{eq:cohenrem2} agree when $\psi \in L^2(\Rd)$, and that $Q_{T}(\psi)(z)=\|\opstft_{S}(\psi)(z)\|^2_{L^2} $ when $T=S^*S$.

\end{rem}
\subsection{On positive Cohen class distributions} \label{sec:poscohen}
The reader will not fail to notice that the Cohen class distributions for which Theorem \ref{thm:cohenclass} applies are of a particular kind, namely of the form $Q_T$ for $T=S^*S$ with $S^*\in \nuc$.

 The condition $S^*\in \nuc$ may be interpreted as requiring a certain time-frequency localization for $Q_{S^*S}$, as one can show that $S^*\in \nuc$ implies that the integral kernel $k_{S^*S}$ belongs to $M^1_v(\Rd)\hat{\otimes} M^1_v(\Rd)$. If $S=\xi \otimes \phi$ for $\|\xi\|_{L^2}=1$, which we know from Example \ref{exmp:recover} corresponds to choosing the window $\phi$ for the modulation spaces, then $S^*S=\phi \otimes \phi$, which has integral kernel in $M^1_v(\Rd)\hat{\otimes}M^1_v(\Rd)$ precisely when $\phi \in M^1_v(\Rd)$. Hence requiring $S^* \in \nuc$ seems like a natural generalization of the assumption in Theorem \ref{thm:windowindependent} that windows $\phi$ for modulation spaces need to satisfy $\phi\in M^1_v(\Rd)$.
 
In addition, the fact that $T=S^*S$ means that $T$ is a positive operator. By \cite[Prop. 7.3]{Luef:2018b}, this is equivalent to $Q_T(\psi)$ being a positive function for each $\psi \in L^2(\Rd).$ This assumption cannot simply be replaced by considering $|Q_T(\psi)|$, as the following example shows. 

\begin{exmp}
	Let $\phi_1$ and $\phi_2$ be compactly supported functions in $\mathscr{S}(\Rd)$ such that their supports do not overlap. Define $T=\phi_1\otimes \phi_2$. Then the integral kernel (or equivalently the Weyl symbol) of $T$ belongs to $\mathscr{S}(\Rdd)$, and has good time-frequency localization in this sense. However, $T$ is not a positive operator as $\phi_1 \neq \phi_2$, and Theorem \ref{thm:cohenclass} fails when replacing $Q_{S^*S}$ by $|Q_T|$: for instance, one easily finds using  that \eqref{eq:cohenrem2} that when $\delta$ is the Dirac distribution
	\begin{equation*}
Q_T(\delta)(z)=\phi_1(x)\overline{\phi_2(x)}\equiv 0.
\end{equation*}
\end{exmp}

An obvious question is whether the positivity and good time-frequency properties exhibited by $Q_{S^*S}$ when $S^*\in \nuc$ are sufficient for Theorem \ref{thm:cohenclass} to hold: 
\begin{quote}
	If $T\in \HS$ has integral kernel in $M^1_v(\Rd)\hat{\otimes} M^1_v(\Rd)$ and is a positive operator on $L^2(\Rd)$, does a version of Theorem \ref{thm:cohenclass} hold with $Q_{S^*S}$ replaced by $Q_T$?
\end{quote}

As a first step in this direction, we note that the statement is true if $T\in \mathfrak{S}$, i.e. if $k_T\in \mathscr{S}(\Rdd)$, as \cite[Prop. 3.15]{Keyl:2015} states that if $T\in \mathfrak{S}$ is positive, then $\sqrt{T}\in \mathfrak{S}$.

\begin{thm} \label{thm:squareroot}
	Let $T\in \mathfrak{S}$ be a positive operator, and assume that $v$ grows at most polynomially. Then, for any $1\leq p,q \leq \infty$ and $v$-moderate weight $m$, we have
	\begin{equation*}
  \|\psi\|_{M^{p,q}_m} \asymp \left\|\sqrt{Q_{T}(\psi)}\right\|_{L^{p,q}_{m}} .
\end{equation*}
\end{thm}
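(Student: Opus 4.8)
The plan is to reduce Theorem \ref{thm:squareroot} directly to Theorem \ref{thm:cohenclass} by producing an operator $S$ with $S^* \in \nuc$ satisfying $S^*S = T$. The natural candidate is the positive square root $S = \sqrt{T}$: since $T$ is positive, $S$ is self-adjoint and positive, so $S^* = S$ and $S^*S = S^2 = T$. Consequently $Q_{S^*S} = Q_T$, and an application of Theorem \ref{thm:cohenclass} to this particular $S$ will give exactly the desired norm equivalence.

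First I would invoke the cited result \cite[Prop. 3.15]{Keyl:2015}, which guarantees that the positive square root of a positive Schwartz operator is again a Schwartz operator. Thus from $T \in \mathfrak{S}$ positive we obtain $S = \sqrt{T} \in \mathfrak{S}$. Next I would check that $S$ satisfies the hypotheses of Theorem \ref{thm:cohenclass}. Because $v$ grows at most polynomially, Example \ref{exmp:schwartz} shows that $\mathfrak{S} \subset \beauty_{v\otimes v} \subset \nuc$ and that $\mathfrak{S}$ is closed under taking adjoints; in particular $S^* = S \in \mathfrak{S} \subset \nuc$. Moreover $\mathfrak{S} \subset \HS$ by Proposition \ref{prop:projectivetensor}, so $S \in \HS$. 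Assuming $T \neq 0$ (the case $T = 0$ may be discarded, since then $Q_T \equiv 0$ and no equivalence is possible), we also have $S \neq 0$.

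With $S^*S = T$ in hand and all hypotheses verified, applying Theorem \ref{thm:cohenclass} to $S$ yields
\begin{equation*}
\|\psi\|_{M^{p,q}_m} \asymp \left\|\sqrt{Q_{S^*S}(\psi)}\right\|_{L^{p,q}_{m}} = \left\|\sqrt{Q_{T}(\psi)}\right\|_{L^{p,q}_{m}},
\end{equation*}
which is the claim. I would also note in passing that the interpretation of $Q_T(\psi)$ for $\psi \in M^\infty_{1/v}(\Rd)$ is unproblematic here, since $T = S^*S$ with $S^* \in \nuc$ falls under the extension \eqref{eq:cohenrem2}.

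The one genuinely nontrivial ingredient, and the step I expect to be the crux, is the preservation of the Schwartz-operator property under taking square roots: continuous functional calculus need not respect kernel smoothness, so this is not an elementary fact, and the whole argument rests on the external input from \cite{Keyl:2015}. Everything else amounts to matching hypotheses and exploiting the identity $S^*S = T$ for the self-adjoint positive root.
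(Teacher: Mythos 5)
Your proposal is correct and follows essentially the same route as the paper: set $S=\sqrt{T}$, use \cite[Prop. 3.15]{Keyl:2015} to get $S\in \mathfrak{S}$, observe via Example \ref{exmp:schwartz} that $S^*=S\in \nuc$ when $v$ grows polynomially, and apply Theorem \ref{thm:cohenclass} with $T=S^*S$. Your added remarks (discarding $T=0$ and checking the interpretation of $Q_T(\psi)$ on $M^\infty_{1/v}(\Rd)$) are harmless refinements of the same argument.
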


\begin{proof}
	As noted, $S:=\sqrt{T}\in \mathfrak{S}$. Then $T=S^*S$, and we saw in Example \ref{exmp:schwartz} that $S\in \mathfrak{S}$ implies that $S\in \nuc$ under the assumption that $v$ grows at most polynomially. The result therefore follows by Theorem \ref{thm:cohenclass}.
\end{proof}

This theorem can also be formulated using the classic definition \eqref{eq:cohentrad} of Cohen's class. In this formulation it states that if $a\in \mathscr{S}(\Rdd)$ and $Q_a(\psi)$ is a positive function for each $\psi \in L^2(\Rd),$ then $\|\psi\|_{M^{p,q}_m} \asymp \|\sqrt{Q_{a}(\psi)}\|_{L^{p,q}_{m}}$. 

A question for further research is then if the same holds for $M^1_{v\otimes v}(\Rdd)$: if $T$ is a positive operator with $k_T\in M^1_{v\otimes v}(\Rdd)$, what can we say about $k_{\sqrt{T}}$? 

\section{Examples: Localization operators} \label{sec:locops}
We now return to the localization operators considered in Section \ref{sec:locop} by choosing $S=\mathcal{A}^{\varphi_1,\varphi_2}_f$ with $\varphi_1\in M^1_v(\Rd)$, $\varphi_2 \in L^2(\Rd)$ and $f\in L^1_v(\Rdd)$. Then $S^*\in \nuc$ by Corollary \ref{cor:locopbound}. To apply Theorem \ref{thm:continuous} to this example, we first note that a calculation gives
\begin{equation*}
\pi(z) \mathcal{A}^{\varphi_1,\varphi_2}_f \pi(z)^* = \mathcal{A}^{\varphi_1,\varphi_2}_{T_z(f)},
\end{equation*} 
i.e. conjugating the localization operator by $\pi(z)$ amounts to translating $f$ by $z$. As we saw in Section \ref{sec:bonychemin} we also have by the unitarity of $\pi(z)$ that
\begin{equation} \label{eq:shiftingstft}
\|\opstft_{\mathcal{A}^{\varphi_1,\varphi_2}_f } \psi(z) \|_{L^2}=\|\pi(z)\mathcal{A}^{\varphi_1,\varphi_2}_f \pi(z)^*\psi\|_{L^2}=\|\mathcal{A}^{\varphi_1,\varphi_2}_{T_z(f)}(\psi)\|_{L^2},
\end{equation} 
 hence we obtain the following from Theorem \ref{thm:continuous}.

\begin{thm} \label{thm:locop}
	Assume that $\varphi_1\in M^1_v(\Rd)$, $\varphi_2 \in L^2(\Rd)$ and $f\in L^1_v(\Rdd)$. For any $1\leq p,q \leq \infty$ and $v$-moderate weight $m$, we have
	\begin{equation*}
  \left( \int_{\Rd} \left( \int_{\Rd} \left\|\mathcal{A}^{\varphi_1,\varphi_2}_{T_{(x,\omega)}(f)}(\psi)\right\|_{L^2}^p m(x,\omega)^p\ dx\right)^{q/p} \ d\omega \right)^{1/q}\  \asymp   \|\psi\|_{M^{p,q}_m},
\end{equation*}
 where the integrals are replaced by supremums if $p=\infty$ or $q=\infty$.
\end{thm}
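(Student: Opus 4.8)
The plan is to recognize the left-hand side of the claimed equivalence as the $L^{p,q}_m(\Rdd;L^2)$-norm of $\opstft_S(\psi)$ for the operator $S=\mathcal{A}^{\varphi_1,\varphi_2}_f$, and then to invoke Theorem \ref{thm:continuous} directly. Since the analytic content has already been established in Sections \ref{sec:operators}--\ref{sec:mainresult}, the entire task reduces to verifying the single standing hypothesis $S^*\in\nuc$ and to rewriting the integrand via the covariance identity \eqref{eq:shiftingstft}; once both are in place the estimate is automatic.

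First I would check the nuclearity condition. The hypotheses $\varphi_1\in M^1_v(\Rd)$, $\varphi_2\in L^2(\Rd)$ and $f\in L^1_v(\Rdd)$ are precisely those of Corollary \ref{cor:locopbound}, which yields $\left(\mathcal{A}^{\varphi_1,\varphi_2}_f\right)^*\in\nuc$ together with the quantitative bound $\|(\mathcal{A}^{\varphi_1,\varphi_2}_f)^*\|_{\mathcal{N}}\leq \|f\|_{L^1_v}\|\varphi_1\|_{M^1_v}\|\varphi_2\|_{L^2}$. Thus $S=\mathcal{A}^{\varphi_1,\varphi_2}_f$ satisfies $S^*\in\nuc$, the standing assumption of Theorem \ref{thm:continuous}. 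One should also keep track of the requirement $S\neq 0$ in Theorem \ref{thm:continuous} (needed only for the lower bound); this is guaranteed by nondegeneracy of the data $\varphi_1,\varphi_2,f$, and I would note it explicitly.

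Next I would justify the covariance identity $\pi(z)\mathcal{A}^{\varphi_1,\varphi_2}_f\pi(z)^*=\mathcal{A}^{\varphi_1,\varphi_2}_{T_z(f)}$ used to obtain \eqref{eq:shiftingstft}. Writing $\mathcal{A}^{\varphi_1,\varphi_2}_f=f\star(\varphi_2\otimes\varphi_1)$ via \eqref{eq:opconv}, conjugation by $\pi(z)$ passes inside the Bochner integral, and the composition law \eqref{eq:tfcomposition} makes the phase factors in $\pi(z)\pi(w)\,S\,\pi(w)^*\pi(z)^*=\pi(z+w)\,S\,\pi(z+w)^*$ cancel; a change of variables then replaces $f$ by its translate $T_z(f)$. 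Combined with the unitarity of $\pi(z)$ on $L^2(\Rd)$, this gives the pointwise identity $\|\opstft_S(\psi)(z)\|_{L^2}=\|\mathcal{A}^{\varphi_1,\varphi_2}_{T_z(f)}(\psi)\|_{L^2}$ for every $z=(x,\omega)$.

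Finally I would substitute this pointwise identity into the definition of the $L^{p,q}_m(\Rdd;L^2)$-norm, which turns the left-hand side of the claimed equivalence into exactly $\|\opstft_S(\psi)\|_{L^{p,q}_m(\Rdd;L^2)}$. Theorem \ref{thm:continuous} then delivers $\|\opstft_S(\psi)\|_{L^{p,q}_m(\Rdd;L^2)}\asymp\|\psi\|_{M^{p,q}_m}$, which is the assertion. I do not anticipate a genuine obstacle here: the statement is a direct corollary of the main theorem, and the only points requiring care are the verification of $S^*\in\nuc$ (handed to us by Corollary \ref{cor:locopbound}) and the routine covariance computation, both of which are straightforward given the framework already developed.
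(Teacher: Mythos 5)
Your proposal is correct and follows essentially the same route as the paper: verify $S^*\in\nuc$ via Corollary \ref{cor:locopbound}, use the covariance identity $\pi(z)\mathcal{A}^{\varphi_1,\varphi_2}_f\pi(z)^*=\mathcal{A}^{\varphi_1,\varphi_2}_{T_z(f)}$ together with unitarity of $\pi(z)$ to identify the left-hand side with $\|\opstft_S(\psi)\|_{L^{p,q}_m(\Rdd;L^2)}$, and conclude from Theorem \ref{thm:continuous}. Your explicit flagging of the nondegeneracy requirement $S\neq 0$ (needed for the lower bound, and left implicit in the paper's statement) is a small but welcome extra precision.
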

In light of \eqref{eq:stftcohenclass} and \eqref{eq:shiftingstft}, we know that 
\begin{equation*}
 \left\|\mathcal{A}^{\varphi_1,\varphi_2}_{T_{(x,\omega)}(f)}(\psi)\right\|_{L^2}^2=Q_{T}(\psi)
\end{equation*} 
where $T=\left(\mathcal{A}^{\varphi_1,\varphi_2}_{f}\right)^*\mathcal{A}^{\varphi_1,\varphi_2}_{f}=\mathcal{A}^{\varphi_2,\varphi_1}_{\overline{f}}\mathcal{A}^{\varphi_1,\varphi_2}_{f}$. In this sense Theorem \ref{thm:locop} concerns the study of a particular kind of Cohen's class distribution. 

\begin{rem}
	We mention that there is another line of research that leads to equivalent norms for modulation spaces in terms of localization operators, see \cite{Boggiatto:2005,Grochenig:2011toft,Grochenig:2013}. In this approach one considers weights function $m,m_0$ and shows that under various conditions on $m$ the localization operator $\mathcal{A}^{\varphi,\varphi}_m$ is an isomorphism from $M^{p,q}_{m_0}(\Rd)$ to $M^{p,q}_{m_0/m}(\Rd).$ This implies a norm equivalence $\|\psi\|_{M^{p,q}_{m_0}}\asymp \|\mathcal{A}^{\varphi,\varphi}_{m}\psi\|_{M^{p,q}_{m_0/m}}$, which is of a different nature than the one we consider. 
\end{rem}

\subsection{Modulation spaces as time-frequency Wiener amalgam spaces}
A consequence of Theorem	 \ref{thm:locop} is that we may interpret modulation spaces as a time-frequency version of the so-called \textit{Wiener amalgam spaces} \cite{Feichtinger:1983}; a class of function function spaces that have been closely tied to the development of modulation spaces since the inception of the latter in \cite{Feichtinger:1983}. To explain this interpretation, we start by considering for $\varphi\in M^1_v(\Rd)$ and $f\in L^1_v(\Rdd)$ the localization operator
\begin{equation} \label{eq:locopinterpret}
\mathcal{A}_{f}^{\varphi,\varphi}(\psi)=\int_{\Rdd} f(z) V_{\varphi}\psi(z) \pi(z) \varphi \ dz.
\end{equation}

In time-frequency analysis, when $\varphi$ is well-localized in time and frequency such as the Gaussian, the size of $|V_\varphi \psi (x,\omega)|$ is interpreted as a measure of the contribution of the frequency $\omega$ at time $x$ of the signal $\psi$. By the reconstruction formula 
\begin{equation} \label{eq:reconstruct}
\psi = \frac{1}{\|\varphi\|_{L^2}^2}\int_{\Rdd} V_\varphi \psi(z) \pi(z) \varphi \ dz
\end{equation}
we can recover $\psi$ from $V_\varphi \psi$, and \eqref{eq:locopinterpret} finds a natural interpretation as a multiplication operator in the time-frequency plane: we represent $\psi$ in the time-frequency plane by forming $V_\varphi \psi$, but before we reconstruct $\psi$ from $V_\varphi \psi$ we multiply it by $f(z)$. A particular choice of $f$ is to let $f$ be the characteristic function $\chi_\Omega$ for some compact subset $\Omega$. Then
\begin{equation*}
\mathcal{A}^{\varphi,\varphi}_{T_z(\chi_\Omega)}(\psi)=\int_{\Rdd} \chi_{z+\Omega}(z') V_\varphi \psi(z') \pi(z') \psi \ d z',
\end{equation*}
which in light of \eqref{eq:reconstruct} may be interpreted as saying that $\mathcal{A}^{\varphi,\varphi}_{T_z(\chi_\Omega)}$ picks out the component of $\psi$ localized in $z+\Omega:= \{z+z':z'\in \Omega\}$ in the time-frequency plane. Theorem \ref{thm:locop} says that an equivalent norm on $M^{p,q}_m(\Rd)$ is given by first measuring the \textit{local} size of $\psi$ near $z$ in the time-frequency plane by $\|\mathcal{A}^{\varphi,\varphi}_{T_z(\chi_\Omega)}\psi\|_{L^2}$, and then measuring the \textit{global} properties of $\psi$ by taking the $L^{p,q}_m$ norm.

When $p=q$, this parallels the definition of the Wiener amalgam space $W(L^2,L^p_w)$ with local component $L^2$ and global component $L^{p}_w$.
For a fixed, compact domain $Q\subset \Rd$, the Wiener amalgam space $W(L^2,L^{p}_w)$ for $1\leq p\leq \infty$ and a weight function $w$ on $\Rd$ consists of all functions $\psi:\Rd\to \mathbb{C}$ such that 
\begin{equation*} 
 \|\psi\|_{W(L^2,L^{p,q}_m)}:= \left( \int_{\Rd} \left\|\chi_{x+Q}\cdot \psi\right\|_{L^2}^p w(x)^p \ dx\right)^{1/p} .
\end{equation*}
Since we interpret $\mathcal{A}_{T_z(\chi_\Omega)}^{\varphi,\varphi}(\psi)$ as $\psi$ localized to $z+\Omega$ in the time-frequency plane and $\chi_{x+Q}\cdot \psi$ is the localization of $\psi$ to $x+Q$ in time, Theorem \ref{thm:locop} says that modulation spaces are the natural analogues of Wiener amalgam spaces when  we localize $\psi$ in the time-frequency plane using $\mathcal{A}^{\varphi,\varphi}_{\chi_\Omega}$, not just in time by multiplying with $\chi_Q$.
We have merely scratched the surface of Wiener amalgam spaces, and the interested reader should consult the survey \cite{Heil:2003am}. However, it is worth noting that when the cutoff-function $\chi_Q$ is replaced by a smooth cutoff-function $\phi$ satisfying $\sum_{\ell\in \mathbb{Z}^{d}} T_{\ell}(\phi)\equiv 1$, then an equivalent norm on $W(L^2,L^p_m)$ is given by
\begin{equation*}
\left(\sum_{\ell\in \mathbb{Z}^d} \|T_\ell(\phi) \cdot \psi\|_{L^2}^p w(\ell)^p\right)^{1/p}.
\end{equation*}
The fact that modulation spaces have a similar discrete description has already been shown by D\"orfler, Feichtinger and Gr\"ochenig in \cite{Dorfler:2006,Dorfler:2011}: if $f\in L^1_v(\Rdd)$ satisfies $$\sum_{(j,k)\in \Z^{2d}} T_{(j,k)}(f)\asymp 1,$$ then an equivalent norm on $M^{p,q}_m(\Rd)$ is given by
\begin{equation*}
 \left( \sum_{k\in \Z^d} \left( \sum_{j\in \Z^d} \|\mathcal{A}^{\varphi,\varphi}_{T_{(j,k)}(f)}(\psi)\|_{L^2}^p m(j,k)^p \right)^{p/q} \right)^{1/q}.
\end{equation*}
Finally, we remark that the local component $L^2$ in $W(L^2,L^p_w)$ can be replaced by several other function spaces $X$ to obtain new Wiener amalgam spaces $W(X,L^p_m).$ One might therefore naturally replace the $L^2$-norm in Theorem \ref{thm:locop} or Theorem \ref{thm:continuous} by another function space norm and investigate the resulting function spaces. 

\subsection{Smoothing spectrograms}

So far in this section we have picked $S$ to be a localization operator $\mathcal{A}_f^{\varphi_1,\varphi_2}$, which corresponds to studying the Cohen's class distribution $Q_T$ for $T=\mathcal{A}^{\varphi_2,\varphi_1}_{\overline{f}}\mathcal{A}^{\varphi_1,\varphi_2}_{f}$. However, we may also proceed as in Section \ref{sec:poscohen} and study the Cohen class distribution $Q_T$ for $T=\mathcal{A}^{\varphi,\varphi}_f$, where $f\in L^1_v(\Rdd)$ is a non-negative function and $\psi \in M^1_v(\Rd)$. The fact that $f$ is non-negative implies that $T$ is a positive operator, and it is not difficult to show that 
\begin{equation*}
Q_{\mathcal{A}^{\varphi,\varphi}_f}(\psi)(z)=f\ast |V_\varphi \psi(z)|^2(z),
\end{equation*}
i.e. the Cohen class of $\mathcal{A}^{\varphi,\varphi}_f$ is a smoothed spectrogram. Theorem \ref{thm:cohenclass} says that if $\mathcal{A}^{\varphi,\varphi}_f=S^*S$ for some $S^*\in \nuc$, then 
$$\|\psi\|_{M^{p,q}_m} \asymp \left\|\sqrt{f\ast |V_\varphi \psi|^2}\right\|_{L^{p,q}_{m}(\Rdd)}.$$

As we discussed in Section \ref{sec:poscohen}, the existence of such $S$ is not clear in general, but if $\mathcal{A}^{\varphi,\varphi}_f \in \mathfrak{S}$ we can use Theorem \ref{thm:squareroot} to deduce the following result.

\begin{prop}
	Let $\varphi\in \mathscr{S}(\Rd)$ and let $f\in L^1(\Rdd)$ be a positive function of compact support. If $v$ grows at most polynomially and $m$ is $v$-moderate, then  
	\begin{equation*}
\|\psi\|_{M^{p,q}_m} \asymp \left\|\sqrt{f\ast |V_\varphi \psi|^2}\right\|_{L^{p,q}_{m}(\Rdd)}.
\end{equation*}
\end{prop}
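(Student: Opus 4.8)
The plan is to reduce the statement to Theorem \ref{thm:squareroot} by verifying that $T:=\mathcal{A}^{\varphi,\varphi}_f$ is a positive Schwartz operator, and then to substitute the identity $Q_{\mathcal{A}^{\varphi,\varphi}_f}(\psi)=f\ast |V_\varphi\psi|^2$ recorded just above the statement. Since $\varphi\in\mathscr{S}(\Rd)\subset M^1_v(\Rd)$ and $f\in L^1(\Rdd)$ has compact support (so in particular $f\in L^1_v(\Rdd)$), the localization operator $T=f\star(\varphi\otimes\varphi)$ is well-defined, and once $T\in\mathfrak{S}$ is known to be positive, Theorem \ref{thm:squareroot} immediately yields $\|\psi\|_{M^{p,q}_m}\asymp\|\sqrt{Q_T(\psi)}\|_{L^{p,q}_m}$, which is the claim after the substitution.

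Positivity would be the routine part. Writing $T=\int_{\Rdd}f(z)\,\pi(z)(\varphi\otimes\varphi)\pi(z)^*\,dz$ as in \eqref{eq:opconv} and using $\pi(z)(\varphi\otimes\varphi)\pi(z)^*=(\pi(z)\varphi)\otimes(\pi(z)\varphi)$, I would obtain for every $\psi\in L^2(\Rd)$ that
\begin{equation*}
\inner{T\psi}{\psi}_{L^2}=\int_{\Rdd}f(z)\,|\inner{\psi}{\pi(z)\varphi}_{L^2}|^2\,dz=\int_{\Rdd}f(z)\,|V_\varphi\psi(z)|^2\,dz\geq 0,
\end{equation*}
since $f\geq 0$; hence $T$ is positive.

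The hard part will be to show $T\in\mathfrak{S}$, i.e. that its integral kernel is Schwartz; the compact support of $f$ is essential here, as for merely integrable $f$ the symbol need not decay. From the rank-one expansion above the kernel is
\begin{equation*}
k_T(x,y)=\int_{\Rdd}f(z)\,(\pi(z)\varphi)(x)\,\overline{(\pi(z)\varphi)(y)}\,dz,
\end{equation*}
an integral over the compact support of $f$ of the family $e^{2\pi i\eta\cdot(x-y)}\varphi(x-u)\overline{\varphi(y-u)}$ for $z=(u,\eta)$. For $(u,\eta)$ in that compact set these functions are smooth in $(x,y)$ and, with all their derivatives, rapidly decreasing in $(x,y)$ uniformly in $z$; differentiating under the integral and bounding $x^\alpha y^\beta\partial_x^\gamma\partial_y^\delta k_T$ by $\|f\|_{L^1}$ times a uniform Schwartz estimate then gives $k_T\in\mathscr{S}(\Rdd)$. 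Equivalently—and perhaps more transparently—the Weyl symbol of $T$ is $f\ast W(\varphi)$; since $\varphi\in\mathscr{S}(\Rd)$ forces $W(\varphi)\in\mathscr{S}(\Rdd)$ and the convolution of a compactly supported $L^1$ function with a Schwartz function is again Schwartz (smoothness from the Schwartz factor, rapid decay from the compact support of $f$), the symbol lies in $\mathscr{S}(\Rdd)$, so $T\in\mathfrak{S}$ by the symbol description of $\mathfrak{S}$ recalled in Example \ref{exmp:schwartz}.

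With $T\in\mathfrak{S}$ positive and $v$ of polynomial growth, Theorem \ref{thm:squareroot} applies and gives $\|\psi\|_{M^{p,q}_m}\asymp\|\sqrt{Q_T(\psi)}\|_{L^{p,q}_m}$; inserting $Q_T(\psi)=f\ast|V_\varphi\psi|^2$ then completes the argument. The only step requiring genuine care is the Schwartz claim for $k_T$ (equivalently for $f\ast W(\varphi)$), and everything else is a direct consequence of the results already established.
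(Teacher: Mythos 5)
Your proposal is correct and follows essentially the same route as the paper: the paper's proof likewise identifies the Weyl symbol of $\mathcal{A}^{\varphi,\varphi}_f$ as $f\ast W(\varphi)$, notes $W(\varphi)\in\mathscr{S}(\Rdd)$ and that convolving with the compactly supported $f\in L^1$ keeps it Schwartz, so $\mathcal{A}^{\varphi,\varphi}_f\in\mathfrak{S}$, and then invokes Theorem \ref{thm:squareroot}. Your explicit positivity computation and the direct kernel estimate are sound additions (the paper records positivity in the discussion preceding the proposition and leaves the kernel route implicit), but they do not change the substance of the argument.
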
 
\begin{proof}
	The Weyl symbol of $\mathcal{A}^{\varphi,\varphi}_{f}$ is the function $f\ast W(\varphi)$, see for instance \cite[Lem. 2.4]{Boggiatto:2004}. As $\varphi \in \mathscr{S}(\Rd)$ it follows by \cite[Lem. 14.5.1]{Grochenig:2001} that $W(\phi)\in \mathscr{S}(\Rdd)$. Hence the assumptions on $f$ imply that $f\ast W(\varphi)\in \mathscr{S}(\Rdd)$, which means that $\mathcal{A}_f^{\varphi,\varphi}\in \mathfrak{S}.$ The result therefore follows by Theorem \ref{thm:squareroot}.
\end{proof}

\appendix
\section{Proof of Proposition \ref{prop:underspread}}

\begin{proof}[Proof of Proposition \ref{prop:underspread}] 
First recall from Section \ref{sec:bonychemin} that $\beauty$ consists precisely of those $T\in \HS$ such that the Weyl symbol $\weyl_T$ belongs to $M^1(\Rdd)$.
Then recall that we assume 
\begin{equation*}
T=\int_{\Rdd} F(x,\omega) e^{-i\pi x\cdot \omega} \pi(x,\omega) \ dxd\omega,
\end{equation*}
where $F(x,\omega)\in L^2(\Rdd)$ has compact support, say $\text{supp}(F)\subset K$. As in \cite{Luef:2018c}, we denote the function $F$ by $\F_W(T)$ -- it plays the role of a Fourier transform of the operator $T$ in quantum harmonic analysis. 

 One can show that 
\begin{equation*}
\F_W(T)=\F_\sigma(\weyl_T),
\end{equation*} 
where $\F_\sigma(f)$ is the \textit{symplectic Fourier transform} of $f\in L^1(\Rdd)$ given by
	\begin{equation*}
\F_{\sigma} f(x,\omega)=\int_{\R^{2d}} f(x',\omega') e^{-2 \pi i (x'\cdot \omega - x\cdot \omega')} \ dx'd\omega' \quad \text{ for } x,x',\omega,\omega' \in \Rd.
\end{equation*}

Then fix some $R\in \beauty$ such that $\F_W(R)$ has no zeros, an explicit example is $R=\varphi_0\otimes \varphi_0$ \cite[Ex. 6.1]{Luef:2018c}. As $R\in \beauty$, we have  $\weyl_R=\F_\sigma\F_W(R)\in M^1(\Rdd)$.

Since $\weyl_R\in L^1(\Rdd)$ and $\F_\sigma(\weyl_R)=\F_W(R)$ never vanishes, the Wiener-L\'{e}vy theorem \cite[Thm. 3.1]{Reiter:2000} implies the existence of some $h\in L^1(\Rdd)$ such that $$\F_\sigma(h)(z)=\frac{1}{\F_\sigma(\weyl_R)}=\frac{1}{\F_W(R)(z)}\quad \text{ for } z\in K.$$ 

Then define the operator $$T'=(h\ast \weyl_R) \star T,$$ where $\star$ is the operation from \eqref{eq:opconv}. The "Fourier transform" $\F_W$ interacts with the convolutions in the expected way \cite[Prop. 6.4]{Luef:2018c}; more precisely, we have that 
$$\F_W(T')=\F_\sigma(h)\F_W(R)\F_W(T)=\F_W(T)$$ by construction of $h$. As $\F_W$ is injective, see \cite[Cor. 7.6.3]{Feichtinger:1998}, it follows that $T=T'$.

On the other hand, the function $b:=h\ast \weyl_R$ belongs to $M^1(\Rdd)$ since $L^1(\Rdd)\ast M^1(\Rdd)\subset M^1(\Rdd)$ by \cite[Prop. 12.1.7]{Grochenig:2001}. The Weyl symbol of $T=T'=b\star T$ is given by $\weyl_T=b\ast \weyl_T$ \cite[Prop. 5.2]{Luef:2018b}. Since $b\in M^1(\Rdd)$ and $T\in \tco$, \cite[Thm. 8.1]{Luef:2018c} implies\footnote{The theorem states that if $S\in \beauty$ and $T\in \tco$, then $S\star T\in M^1(\Rdd)$. $S\star T$ is just another notation for $\weyl_{S}\ast \weyl_T$, so our result follows by picking $S=L_b$.} that $\weyl_T = b\ast \weyl_T\in M^1(\Rdd)$, hence $T\in \beauty$.  
\end{proof}

\section*{Acknowledgements}
The author acknowledges helpful feedback from Franz Luef on various drafts of the paper.

\end{document}